\DeclareMathOperator{\diag}{diag}
\newtheorem{definition}{Definition}[section]
\newtheorem{theorem}{Theorem}[section]
\newtheorem{lemma}[theorem]{Lemma}
\newtheorem{assumption}{Assumption}[section]
\newtheorem{proposition}{Proposition}[section]
\def\BibTeX{{\rm B\kern-.05em{\sc i\kern-.025em b}\kern-.08em
    T\kern-.1667em\lower.7ex\hbox{E}\kern-.125emX}}
\begin{document}

\title{Bayesian Risk-averse Model Predictive Control with Consistency and Stability Guarantees}

\author{Yingke Li,  Yifan Lin, Enlu Zhou and Fumin Zhang
\thanks{Yingke Li is with the Department of Aeronautics and Astronautics, MIT, Cambridge, MA, 02139 USA
        {\tt\small yingkeli@mit.edu}}
\thanks{Yifan Lin and Enlu Zhou are with the School of Industrial and Systems Engineering, Georgia Institute of Technology, Atlanta, GA, 30332 USA
        {\tt\small ylin429,ezhou30@gatech.edu}}
\thanks{Fumin Zhang is with the Department of Electronic and Computer Engineering, Hong Kong University of Science and Technology, Kowloon, Hong Kong, China
        {\tt\small eefumin@ust.hk}}
\thanks{The research work is supported by ONR grants N00014-19-1-2556 and N00014-19-1-2266; AFOSR grants FA9550-19-1-0283 and FA9550-22-1-0244; NSF grants CNS-1828678, S\&AS-1849228, DMS-2053489 and GCR-1934836; and NOAA grant NA16NOS0120028.}
}

\maketitle

\begin{abstract}
Model Predictive Control (MPC) is a powerful framework for constrained control, but its performance and safety can be severely degraded when the prediction model is learned online and thus remains uncertain. In this work, we develop a Bayesian risk-averse MPC framework for stochastic, discrete-time, nonlinear systems that provides theoretical guarantees on the consistency of Bayesian learning and closed-loop stability. First, we study Bayesian learning under the conditionally independent state transitions induced by feedback control and establish explicit conditions for Bayesian consistency on an infinitely countable parameter space. Second, we introduce a general notion of risk-averse asymptotic stability (RAAS), defined via comparison function classes and independent of any specific coherent risk measure or convergence rate, and we derive a risk-averse Lyapunov stability theorem together with MPC-specific stability conditions. Third, building on these foundations, we design a practical Bayesian risk-averse MPC scheme that separates epistemic (parametric) and aleatoric (disturbance) uncertainty: additive disturbances are treated in a risk-neutral fashion, while parametric uncertainty is managed via dynamically shrinking ambiguity sets constructed from Bayesian credible intervals, approximated online using particle filtering. To enable real-time implementation, we propose both an optimal and a sub-optimal receding-horizon control policy, the latter obtained by warm-starting from the previous solution, and prove that asymptotic RAAS is recovered as the Bayesian estimator becomes consistent.
\end{abstract}

\begin{IEEEkeywords}
Bayesian learning, Model predictive control, Risk-averse optimization 
\end{IEEEkeywords}

\section{Introduction}

Model Predictive Control (MPC) \cite{Rawlings2017ModelDesign}, as the prime methodology for constrained control, offers a significant opportunity to exploit the abundance of data in a reliable manner, particularly while taking safety constraints into account \cite{Mesbah2018StochasticControl, Koller2019Learning-BasedExploration, Bonzanini2020SafeTrees}.
Deploying a receding horizon fashion, MPC is a practical implementation of optimal control principles to design a closed-loop controller, which inherently incorporates feedback by re-optimizing at each time step based on new measurements.

The MPC scheme relies on a sufficiently descriptive model of the system to optimize performance and ensure constraint satisfaction, rendering modeling critical for the success of the resulting control system.
The classic controller paradigms for MPC follow a strict separation of a design phase, which is carried out offline by a control engineer, and an application phase of closed-loop control, during which the formulation of the controller remains essentially unchanged. 
Recent successes in machine learning have significantly improved offline controller design and opened new avenues for learning-based control \cite{Rosolia2018LearningFramework, Hewing2020Learning-BasedControl}. 
Numerous explicit model-based approaches, such as Bayesian MPC \cite{Wabersich2020BayesianSampling, Wabersich2020PerformanceGuarantees}, have been proposed to infer the prediction model from recorded data directly.
Another revenue of methods, which rely on an implicit model description based on behavioral systems theory, has also emerged to empower Data-enabled Predictive Control (DeePC) \cite{Coulson2019Data-enabledDeepc, Berberich2021Data-DrivenGuarantees}.

The increasing availability of sensing and communication capabilities, coupled with enhanced computational power, has sparked renewed interest in automating controller design and adaptation based on data collected during operation, e.g., for improved performance, facilitated deployment, and a reduced need for manual controller tuning \cite{Thangavel2018DualApproach, Thangavel2018RobustEstimation, Lorenzen2019RobustUpdate, Kohler2020ASystems}. 
For online learning-based approaches, a primary concern is the consistency of estimation. Specifically, this refers to whether the time-varying model characterized by the online estimator can converge to the true model given sufficient time and data. 
The consistency of the Bayesian estimators with independent and identically distributed (i.i.d.) data has been extensively studied \cite{Diaconis1986OnEstimates}. 
However, when Bayesian estimators are applied to online learning-based control, they must handle correlated but conditionally independent data, as the likelihood of system transitions depends on the system state and control action.
In this paper, we fill this research gap by providing explicit conditions that guarantee Bayesian consistency in an infinitely countable parameter space under conditionally independent state transitions, where the transition kernel is conditionally independent with respect to (w.r.t.) the state and action.

An inherent limitation of online learning-based control is the unavoidable presence of uncertainty, as accurate system models can only be partially and incrementally inferred from sequentially available data. This model ambiguity is especially pronounced during the early stages of learning when data is limited. As a result, the performance and constraint satisfaction of MPC can be significantly compromised if this uncertainty is not effectively addressed \cite{Rawlings2017ModelDesign}. 

Two mainstream approaches exist to address the uncertainty in MPC: the \textit{robust} and the \textit{stochastic} approaches. 
Robust MPC strategies, in which modeling errors or disturbances are modeled as unknown but bounded quantities, provide a conservative satisfaction of hard constraints of states and control inputs for all possible uncertainty realizations \cite{Lorenzen2019RobustUpdate, Kohler2020ASystems}.
Stochastic MPC methods, on the other hand, address more realistic scenarios where modeling errors or disturbances are generally unbounded, which allow the toleration of constraint violations with a prespecified probability \cite{Mesbah2016StochasticResearch, Mesbah2018StochasticControl}.

Leveraging the probabilistic nature of uncertainty, a \textit{risk-averse} (or \textit{distributionally robust}) paradigm has been proposed by interpolating between optimistically assuming the expected case and pessimistically fixating on the worst-case. 
This risk-averse formulation effectively accounts for the impact of \textit{low-probability, high-impact} extreme events, providing significant advantages in managing uncertainty for safety-critical autonomous systems. We refer the readers to \cite{Wang2022Risk-averseControl, Coulson2021DistributionallyControl} for a broader perspective of risk-aware control theory.
A pivotal aspect when evaluating the performance of an autonomous system is its stability, specifically whether the system can reliably converge to the desired behavior. However, theoretical guarantees of risk-aware stability are often limited to certain special conditions, such as focusing on a specific coherent risk measure (e.g., CVaR in \cite{Kishida2024Risk-AwareInvariance}, polytopic risk measures in \cite{Singh2019AAlgorithms}) and/or a specific convergence rate type like exponential stability \cite{Chapman2022Risk-AwareSystems, Singh2019AAlgorithms, Sopasakis2019Risk-averseControl, Sopasakis2019Risk-averseControlb}.
This paper introduces a general notion of \textit{Risk-averse Asymptotic Stability (RAAS)} defined in terms of comparison function classes, without the restrictions of any specific coherent risk measure or convergence rate. We develop a \textit{risk-averse Lyapunov stability theorem} that explicitly identifies the conditions for RAAS. The reliance on the Lyapunov function makes these conditions applicable to general non-linear systems, in contrast to most existing works focusing on linear systems \cite{Kishida2024Risk-AwareInvariance, Chapman2022Risk-AwareSystems, Singh2019AAlgorithms}. When a system is controlled by an MPC controller, sufficient conditions for ensuring RAAS are derived in terms of the stage cost and terminal cost of the MPC controller's value function.

As an attempt to handle the uncertainty that exists in online learning-based MPC from a risk-averse perspective, we propose a Bayesian risk-averse MPC framework for stochastic, discrete, and nonlinear systems. 
The greatest challenge that impedes effective control of nonlinear stochastic systems is the entanglement between imperfect system modeling and additive stochastic disturbance.
This paper distinguishes between two sources of uncertainty: one arising from flawed estimation of unknown system parameters (usually referred to as epistemic uncertainty, or parametric uncertainty) and the other from noisy disturbances (usually referred to as intrinsic uncertainty, or aleatoric uncertainty). Since these two types of uncertainty can have markedly different impacts on the evolution of nonlinear systems, they are addressed independently. Additive random disturbances are typically modeled as i.i.d. zero-mean white noise, with limited magnitude, whose effects can be mitigated in a risk-neutral manner by taking expectations over the noise distribution. However, errors due to parametric uncertainty may undergo distorted propagation and be vulnerable to extreme events due to the nonlinearity of systems. To address the unique characteristics of parametric uncertainty, we consider applying a risk-averse perspective over the unknown parameter.
Unlike most online data-driven distributionally robust MPC approaches that design time-varying ambiguity sets for additive random disturbances \cite{Ning2021OnlineSystems, Coppens2022Data-DrivenSystems}, our approach shifts the focus to managing risks in parametric uncertainty, where the ambiguity set adapts dynamically to the sequentially updated Bayesian posteriors of the unknown parameters. 
Similar distributionally robust formulations for parametric uncertainty have been proposed specifically for Markov jump systems with unknown switching probabilities \cite{Schuurmans2020Learning-BasedFeasibility, Schuurmans2023ASystems}. Alternatively, our framework considers more general nonlinear stochastic systems with well-defined state transition kernels.

In this paper, we develop a practical Bayesian risk-averse MPC algorithm to capacitate computationally tractable implementation with theoretical guarantees of consistency and stability. 
The primary challenges for practical implementation stem from two components: calculating the Bayesian posterior update and optimizing the multi-stage value function.

We address the first challenge by utilizing a well-accepted sequential sampling-based approach, \textit{particle filter} \cite{Doucet2010ALater}, to approximate the sequential Bayesian update. 
\textit{Credible interval}, also known as Bayesian confidence interval, is a standard statistical tool for quantifying uncertainty with Bayesian posteriors.
Therefore we devise a pragmatic approach to construct a time-varying ambiguity set based on the credible interval, whose radius will shrink to zero when the Bayesian estimator is consistent.
This approach can be viewed as a generalization of classical Gaussian process-based methods\cite{Guzman2021HeteroscedasticControl, Guzman2022BayesianUncertainty}, providing greater flexibility by constructing ambiguity sets from general probability distributions, rather than being limited to the symmetric and unimodal normal distributions typical of Gaussian processes. 
Additionally, Gaussian process often suffers from increasingly high computational cost (due to the inversion of the covariance matrix) as more data points have been sampled. 
In contrast, our method maintains a constant computational cost over time by updating the Bayesian posterior on a fixed parameter space.

To deal with the challenge of multi-stage stochastic optimization, we resort to the parameterization of control policy to reduce the computational burden.
The parameterized optimal control policy is obtained by solving the corresponding time-varying risk-averse MPC problem, whose closed-loop stability is guaranteed asymptotically when the Bayesian estimator is consistent.
To further reduce the computational burden, we derive an alternative sub-optimal control policy that reduces the demands on online optimization algorithms while maintaining asymptotic stability. This sub-optimal policy is obtained by directly optimizing over a feasible warm-start control policy. Leveraging the receding horizon framework of MPC and the forward-shrinking property of ambiguity sets, the control policy from the previous step naturally provides a feasible warm-start for the next step, significantly alleviating the online optimization load.

The contributions of this paper are summarized as follows:
\begin{itemize}
    \item[(i)] \textit{Bayesian learning and consistency}: We provide explicit conditions that guarantee Bayesian consistency in an \textit{infinitely countable} parameter space under conditionally independent state transitions.
    \item[(ii)] \textit{Risk-averse MPC and stability}: We introduce a general notion of \textit{risk-averse asymptotic stability} defined in terms of comparison function classes, and rigorously develop the risk-averse Lyapunov stability theorem and risk-averse MPC stability conditions.
    \item[(iii)] \textit{Bayesian risk-averse MPC}: We derive an online Bayesian learning-based risk-averse MPC framework for general nonlinear systems. A practical algorithm is developed to capacitate computationally tractable implementation with theoretical guarantees of consistency and stability. 
\end{itemize}

This paper significantly extends our previous work \cite{Li2022Risk-AwareLearning} through the following contributions:
1) Introduce a concept of \textit{observational distinguishability} to offer an intuitive interpretation of the consistency conditions and insights to design control policies that explicitly balance exploration and exploitation;
2) Generalize the notion of \textit{asymptotic stability} from a \textit{risk-averse} perspective and establish the corresponding Lyapunov and MPC-specific stability conditions; 
3) Reformulate the risk-averse MPC problem into a nested structure solvable via dynamic programming (DP); 
4) Propose a practical Bayesian risk-averse MPC algorithm with computationally efficient implementation through sub-optimal control policies; and 
5) Provide theoretical guarantees on the consistency and stability of both optimal and sub-optimal policies.
None of these results appeared in the earlier conference version.

The remainder of this paper is organized as follows. 
Section \ref{sec:background} reviews some preliminaries on risk measures. 
In Section \ref{sec:problem formulation}, the risk-averse MPC problem is formulated with decoupled parametric uncertainty and noisy disturbances.
Section \ref{sec:Bayesian Learning} presents the sequential Bayesian learning algorithm and provides proof of its consistency. 
In Section \ref{sec:risk-averse MPC}, a general notion of \textit{risk-averse asymptotic stability} is formally defined, and the proofs of the risk-averse Lyapunov stability theorem and risk-averse MPC stability conditions are derived.
The practical Bayesian risk-averse MPC algorithm is developed in Section \ref{sec:Bayesian risk-averse MPC}, where its theoretical guarantee of closed-loop stability is also provided.
Numerical simulations of a real-world scenario are presented in Section \ref{sec:simulation} to validate the effectiveness of the proposed approach, followed by concluding remarks in Section \ref{sec:conclusion}.
All proofs of theorems have been deferred to the appendices for readability.

\section{Background}\label{sec:background}

\subsection{Risk Measure}

A sample space $(\Omega, \mathcal{F})$ consists of an abstract set $\Omega$ and $\sigma$-algebra $\mathcal{F}$ of subsets of $\Omega$. Let $\mathcal{Q}$ be a probability measure on $(\Omega, \mathcal{F})$.
Consider a linear space $\mathcal{Z}:=\mathcal{L}_{p}(\Omega, \mathcal{F}, \mathcal{Q}),p\in [1,+\infty)$, which consists of all $\mathcal{F}$-measurable functions $\phi:\Omega \rightarrow \mathbb{R}$ such that $\int_{\Omega}|\phi(\omega)|^{p} d \mathcal{Q}(\omega) \le + \infty$. 

A \textit{risk measure} is a mapping $\mathcal{R}: \mathcal{Z} \rightarrow \overline{\mathbb{R}}$, where $\overline{\mathbb{R}}:=\mathbb{R}\cup\{+\infty\}\cup\{-\infty\}$ is the extended real line.
It is called \textit{coherent} if it satisfies the following properties:
\begin{itemize}
    \item[(i)] \textit{Subadditivity}, if $Z, Z' \in \mathcal{Z}$, then $\mathcal{R}(Z+Z') \le \mathcal{R}(Z)+\mathcal{R}(Z')$. 
    \item[(ii)] \textit{Monotonicity}, if $Z, Z' \in \mathcal{Z}$, and $Z \ge Z'$, then $\mathcal{R}(Z) \ge \mathcal{R}(Z')$. 
    \item[(iii)] \textit{Translation equivariance}, if $Z \in \mathcal{Z}$, and $a \in \mathbb{R}$, then $\mathcal{R}(Z+a) = \mathcal{R}(Z)+a$. 
    \item[(iv)] \textit{Positive homogeneity}, if $Z \in \mathcal{Z}$, and $\alpha \ge 0$, then $\mathcal{R}(\alpha Z) = \alpha \mathcal{R}(Z)$.
\end{itemize}

Specifically, for any finite (real-valued) coherent risk measure, it has a dual representation of the form (cf. Theorem 6.6 in \cite{Shapiro2009LecturesProgramming}):
\begin{equation}\label{eqn:dual risk}
    \mathcal{R}(Z):= \sup_{Q \in \mathcal{A}} \mathbb{E}_{Q}(Z), \forall Z \in \mathcal{Z},
\end{equation}
where is $\mathcal{A}$ a convex bounded and weakly closed set of probability measures (distributions) on the sample space $(\Omega, \mathcal{F})$, which is often called the \textit{ambiguity set}.

\section{Problem Formulation}\label{sec:problem formulation}

Consider a stochastic, discrete-time, and possibly nonlinear dynamic system:
\begin{equation*}
\begin{split}
    &x_{k+1}=f(x_{k},u_{k},w_{k},\theta), \\
\end{split}
\end{equation*}
where $x_{k} \in \mathcal{X}$ is the system state, and $u_{k}\in \mathcal{U}$ is the applied input at time $k$. The dynamics is subject to various sources of uncertainty, which are distinguished into two categories: $\theta \in \Theta$ is a random vector describing the parametric uncertainty of the system, whose true value $\theta^{\ast}$ is unknown and need to be estimated and updated over time, and $w_{k} \in \mathcal{W}$ describes a sequence of random vectors corresponding to disturbances or process noises in the system, which are often assumed to be i.i.d.  
The state space $\mathcal{X}$, control constraint $\mathcal{U}$, parameter space $\Theta$, and disturbance space $W$ are all non-empty Borel spaces.

MPC is a form of online control in which the control policy is obtained by solving a finite-horizon optimal control problem at each sampling time instant, while only the first step control policy is employed in the system. 
This procedure is repeated at the next time instant with new measurement information incorporated.
For simplicity, the formulation of MPC is presented for the case of full state-feedback control, in which the system states are known at each sampling time instant. 
Let $N \in \mathbb{N}$ be the prediction horizon, and assume that the control horizon is equal to the prediction horizon. Define an \textit{N-stage feedback control policy} as 
$
    \boldsymbol{\mu}:=\{ \mu_{0}(\cdot), \mu_{1}(\cdot), \ldots, \mu_{N-1}(\cdot)\},
$
where the Borel-measurable function $\mu_{i}(\cdot):\mathcal{X} \rightarrow \mathcal{U}$, for all $i=0,\ldots,N-1$ is a general state feedback control law. 

Then the \textit{stochastic MPC} problem with initial state $x$, system parameters $\theta$ and random noise process $\boldsymbol{w} = [w_{0},w_{1},\cdots,w_{N-1}]$, is defined as 
\begin{equation}\label{eqn: value function}
\begin{split}
    \inf_{\boldsymbol{\mu}}\quad &V_{N}(x, \theta, \boldsymbol{\mu}) := \mathbb{E}_{\boldsymbol{w}}\left[\sum_{i=0}^{N-1} l(x_{i},u_{i})+V_{f}(x_{N})\right]  \\
    \text{s.t.}\quad & x_{i+1}=f(x_{i},u_{i},w_{i},\theta), \\
    & u_{i}=\mu_{i}(x_{i}), \\ 
    & x_{0} = x, \\
    & \mathrm{Pr}(x_{N} \in \mathcal{X}_{f}) \ge 1-\epsilon, \\
\end{split}
\end{equation}
where $l(x_{i},u_{i})$ is the stage-wise cost function w.r.t. the state $x_{i}$ and the control input $u_{i}$, $V_{f}(x_{N})$ is the terminal cost function, $\mathcal{X}_{f}$ is the terminal constraint, $\epsilon \in (0,1)$, and $\mathbb{E}_{\boldsymbol{w}}[\cdot]$ denotes the expectation taken w.r.t. the joint probability distribution of the random noise process $\boldsymbol{w}$.

The system parameters $\theta$, together with the random noise process $\boldsymbol{w}$, decide the joint probability distribution $p_{\theta}(\boldsymbol{x})$ of the system trajectory $\boldsymbol{x} = [x_{0},x_{1},\cdots,x_{N}]$, and thus the value function $V_{N}$.
To explore the middle ground between optimistically ignoring the distributional uncertainty of the parameters (nominal control) and pessimistically fixating on the worst-case scenario (robust control), we adopt a risk-averse perspective to quantify the impact of extreme events.
Therefore, we formulate a \textit{risk-averse MPC} problem as:
\begin{equation}\label{eqn: risk-averse MPC}
    \inf_{\boldsymbol{\mu}} \mathcal{R}[V_{N}(x, \theta, \boldsymbol{\mu})],
\end{equation}
where $\mathcal{R}$ is a risk measure taken w.r.t. the value function $V_{N}$ on the sample space $(\mathbf{x},\mathcal{F})$.

This risk-averse formulation \eqref{eqn: risk-averse MPC} is typical in many existing risk-averse autonomous systems \cite{Wang2022Risk-averseControl}, and numerous choices of the risk measure $\mathcal{R}$ can be applied, such as mean-variance, Value at Risk (VaR), and Conditional Value at Risk (CVaR) (cf. \cite{Wu2018AAsymptotics}). In particular, VaR and CVaR are two commonly used risk measures. For a random variable $Z$, $\operatorname{VaR}^{\alpha}$ is defined as the $\alpha$-quantile of $Z$, i.e., $\operatorname{VaR}^{\alpha}(Z):=\inf \{z: \mathbb{P}(Z \leq z) \geq \alpha\}$, and CVaR represents the mean deviations from quantiles, i.e., $\operatorname{CVaR}^{\alpha}(Z):=\small{\inf_{t \in \mathbb{R}}\{t+\alpha^{-1}\mathbb{E}[Z-t]_{+}\}}$. 

In this paper, we consider risk measures $\mathcal{R}$ that are coherent, and assume that the stage cost $l(x,u)$ and terminal cost $V_{f}(x)$ are finite $\forall x \in \mathcal{X}, u \in \mathcal{U}$ such that $V_{N}(x,\theta, \boldsymbol{\mu})$ is also finite for any finite prediction horizon $N$.
Then $\mathcal{R}(V_{N}(x,\theta, \boldsymbol{\mu}))$ is also finite valued (cf. Proposition 6.7 in \cite{Shapiro2009LecturesProgramming}). 
According to the duality property described in \eqref{eqn:dual risk}, \eqref{eqn: risk-averse MPC} can be represented as a \textit{distributionally robust optimization} (DRO) problem:
\begin{equation}\label{eqn: DRO original}
    \inf_{\boldsymbol{\mu}} \sup_{\mathbf{x} \in \mathcal{A}(\mathbf{x})} V_{N}(x, \theta, \boldsymbol{\mu}),
\end{equation}
where $\mathcal{A}(\mathbf{x})$ is an ambiguity set on the sample space $(\mathbf{x},\mathcal{F})$.

This denotes that for coherent risk measures, we can obtain an equivalence between optimizing a coherent risk measure and constructing the corresponding ambiguity set\cite{Ruszczynski2006OptimizationFunctions}. 
In practice, exact ambiguity set construction is usually intractable, so an approximation is often resorted to. 
A popular approach is to specify a reference probability distribution $P$, and the ambiguity set is defined as a set of probability measures in some sense close to $P$ \cite{Shapiro2021TutorialProgramming}. 
In this paper, we focus on managing the risks of parametric uncertainty and choose a set of candidate values of the parameter $\theta \in \mathcal{A}(\theta)$ and their resulting deterministic system trajectories as the reference. Thus, the probability distributions of the system trajectories polluted by noises naturally construct an ambiguity set $\mathcal{A}(\mathbf{x})$.
Then \eqref{eqn: DRO original} is equal to 
\begin{equation}\label{eqn: DRO}
    \inf_{\boldsymbol{\mu}} \sup_{\theta \in \mathcal{A}(\theta)} V_{N}(x, \theta, \boldsymbol{\mu}).
\end{equation}
For simplicity, in the rest of the paper, we denote $\mathcal{A}=\mathcal{A}(\theta)$.

In this paper, we propose an online Bayesian learning-based risk-averse MPC framework, as illustrated in Fig. \ref{fig:MPC}.
Different from the classical risk-averse MPC problem defined on a fixed ambiguity set $\mathcal{A}$, the Bayesian risk-averse MPC problem is defined on the time-varying ambiguity set $\mathcal{A}_{k}$, which is constructed based on the sequentially updated Bayesian posterior distribution of the unknown parameter $\theta$.
And since the focus of this paper is about consistency and stability, we assume that recursive feasibility is satisfied in the rest of this paper.

\begin{figure}
    \centering
    \includegraphics[width=0.48\textwidth]{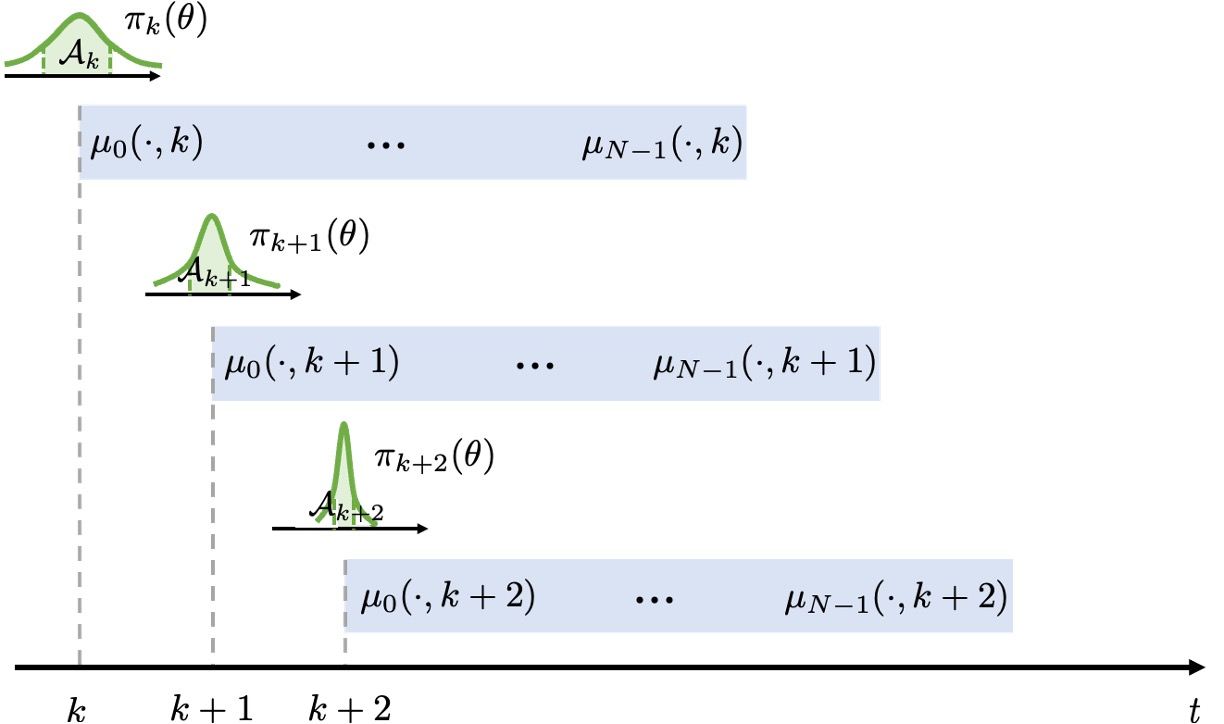}
    \caption{Bayesian risk-averse MPC framework. We use $k$ to represent the sampling time instant, and $N$ to represent the prediction horizon of MPC problems at each sampling time instant.}
    \label{fig:MPC}
\end{figure}

The formulation and properties of the proposed Bayesian risk-averse MPC framework will be derived in detail in the following three sections. 
In Section \ref{sec:Bayesian Learning}, we will first present the sequential Bayesian learning method and prove its consistency under conditionally independent measurements.
We then introduce in detail the nested formulation of risk-averse MPC in Section \ref{sec:risk-averse MPC}, and formally define and prove a general notion of \textit{risk-averse asymptotic stability}.
Finally, we close the loop between Bayesian learning and classical risk-averse MPC in Section \ref{sec:Bayesian risk-averse MPC} by developing a practical Bayesian risk-averse MPC algorithm with theoretical guarantee of stability.

\section{Bayesian Learning and Consistency}\label{sec:Bayesian Learning}

In this section, we present a Bayesian consistent online learning approach to estimate the unknown parameters in the system dynamics.  
We first derive a sequential Bayesian learning update rule in Subsection \ref{subsec:Bayesian Learning}, and then prove its consistency under conditionally independent measurements in Subsection \ref{subsec:Bayesian Consistency}.

\subsection{Sequential Bayesian Learning}\label{subsec:Bayesian Learning}
Let $x_{0:k}$ be the states from time $0$ to $k$, and $u_{0:k}$ be the corresponding control inputs from time $0$ to $k$. Since the system is Markovian, 
\begin{equation*}
    \mathrm{Pr}(x_{k}|\theta,x_{0:k-1},u_{0:k-1})=\mathrm{Pr}(x_{k}|\theta,x_{k-1},u_{k-1}).
\end{equation*}

Define the Bayesian prior $\pi_0$ on $(\Theta, \mathcal{B}_{\Theta})$, where $\mathcal{B}_{\Theta}$ is the Borel $\sigma$-algebra on $\Theta$. Define $\pi_{k}(\theta)=\mathrm{Pr}(\theta|x_{0:k},u_{0:k})$ as the posterior distribution of $\theta$ at time $k$. Therefore,
\begin{equation*}
    \pi_{k}(\theta)=\mathrm{Pr}(\theta|x_{0:k},u_{0:k})= \mathrm{Pr}(\theta|x_{0:k},u_{0:k-1}),
\end{equation*}
followed from the fact that the control input $u_{k}$ does not affect the information on $\theta$ until an observation of the new state $x_{k+1}$ is taken.

According to the Bayes' rule,
\begin{equation*}
\begin{split}
&\mathrm{Pr}(\theta|x_{0:k},u_{0:k}) = \mathrm{Pr}(\theta|x_{0:k},u_{0:k-1})\\
=&\frac{\mathrm{Pr}(\theta,x_{k}|x_{0:k-1},u_{0:k-1})}{\mathrm{Pr}(x_{k}|x_{0:k-1},u_{0:k-1})} \\
=&\frac{\mathrm{Pr}(x_{k}|\theta,x_{0:k-1},u_{0:k-1})\mathrm{Pr}(\theta|x_{0:k-1},u_{0:k-1})}{\mathrm{Pr}(x_{k}|x_{0:k-1},u_{0:k-1})} \\
=&\frac{\mathrm{Pr}(x_{k}|\theta,x_{k-1},u_{k-1})\mathrm{Pr}(\theta|x_{0:k-1},u_{0:k-1})}{\int\mathrm{Pr}(x_{k}|\theta,x_{k-1},u_{k-1}) \mathrm{Pr}(\theta | x_{0:k-1}, u_{0:k-1})d\theta}.
\end{split}
\end{equation*}

Define a \textit{state transition stochastic kernel} $q(\cdot)$ that satisfies $\int_B q(x_k;\theta, x_{k-1}, u_{k-1})dx_{k} = \mathrm{Pr}(B|\theta,x_{k-1},u_{k-1})$, where $B$ is an arbitrary Borel-measurable set (cf. Section 8.1 in \cite{Bertsekas1996StochasticCase}). 
Since the disturbance $w_{k-1}$ admits the form of a probability density function, the transition kernel $q(x_k;\theta, x_{k-1}, u_{k-1})=\mathrm{Pr}(f(x_{k-1},u_{k-1},w_{k-1},\theta)|\theta,x_{k-1},u_{k-1})$ is also a probability density function, which is determined by the Borel-measurable transition function $f(\cdot)$ and the probability density function of disturbance $w_{k-1}$. 
Therefore, the posterior distribution of $\theta$ is updated as 
\begin{equation}\label{eqn:posterior distribution}
    \pi_{k}(\theta)=\frac{q(x_{k};\theta,x_{k-1},u_{k-1})}{\int q(x_{k};\theta,x_{k-1},u_{k-1}) \pi_{k-1}(\theta)d\theta }\pi_{k-1}(\theta).
\end{equation}

For the majority of the systems in the field of control, the noise is assumed to be Gaussian white noise, which naturally satisfies the following assumption.

\begin{assumption}
The transition kernel $q(x';\theta,x,u)$ is continuously differentiable, strictly positive, and has bounded first-order derivative in $x'$.
\end{assumption}

With the above assumption, we can easily show that the transition kernel $q(x';\theta,x,u)$ is bounded, then the integration in \eqref{eqn:posterior distribution} is non-zero and finite since $\int \pi(\theta) d\theta = 1$. Thus the posterior distribution is well-defined. 


\subsection{Consistency of Sequential Bayesian Learning}\label{subsec:Bayesian Consistency}

\begin{definition}[Bayesian Consistency \cite{Vaart1998AsymptoticStatistics}]
The Bayesian estimator is \textbf{(strongly) consistent} if $\pi_k$, the posterior distribution of $\theta$, converges to the degenerated distribution $\delta_{\theta^{\ast}}$ that concentrates on the true parameter value $\theta^{*}$, with probability 1 (w.p.1.). 
\end{definition}

\noindent \textbf{Remark.} Due to the technical challenges of analyzing the continuous parameter space, we analyze the consistency of the Bayesian estimator by assuming that the parameter space $\Theta$ is discrete but consists of an infinite number of candidates, which can approximate the continuous parameter space with arbitrary precision.

To derive the conditions that ensure Bayesian consistency under conditionally independent measurements, we first introduce the concept of observational distinguishability (or identifiability, cf. Definition 5.2 in \cite{Lehmann2006TheoryEstimation} and Assumption 3.14 in \cite{Liu2024BayesianData}).
Let $\mathcal{P}(\eta)=\{\mathrm{Pr}(\cdot|\theta, \eta),\theta \in \Theta\}$ be a statistical model with parameter space $\Theta$ for a specific context $\eta$. 

\begin{definition}[Observational Distinguishability]
The elements within $\mathcal{D} \subseteq \Theta$ are \textbf{observationally non-distinguishable} from $\{\mathcal{P}(\eta_{m}), m \in \mathcal{M} \subseteq \mathbb{N}\}$ where $\mathbb{N}$ is the set of natural numbers, if there exist non-zero scalars $c_{1}, c_{2}, \cdots$ such that $\sum_{\theta_{i}\in \mathcal{D}} c_{i}\mathrm{Pr}(\cdot|\theta_{i}, \eta_{m}) = 0, \forall m \in \mathcal{M}$.
Otherwise, they are (at least partially) \textbf{observationally distinguishable}.
Furthermore, they are \textbf{strongly observationally distinguishable} if each element is observationally distinguishable to others, i.e., if $\sum_{\theta_{i}\in \mathcal{D}} c_{i}\mathrm{Pr}(\cdot|\theta_{i}, \eta_{m}) = 0, \forall m \in \mathcal{M}$, then $c_{1}=c_{2}=\cdots=0$.
\end{definition}

From the definitions above, the following properties are straightforward:
\begin{proposition}[Observational Distinguishability with Extended Contexts]\label{prop:extened contexts}
Let $\mathcal{M}_{1} \subseteq \mathcal{M}_{2}$. If the elements within $\mathcal{D} \subseteq \Theta$ are observationally non-distinguishable from $\{\mathcal{P}(\eta_{m}), m \in \mathcal{M}_{2}\}$, then they are also observationally non-distinguishable from $\{\mathcal{P}(\eta_{m}), m \in \mathcal{M}_{1}\}$. 
On the contrary, if the elements within $\mathcal{D} \subseteq \Theta$ are (strongly) observationally distinguishable from $\{\mathcal{P}(\eta_{m}), m \in \mathcal{M}_{1}\}$, then they are also (strongly) observationally distinguishable from $\{\mathcal{P}(\eta_{m}), m \in \mathcal{M}_{2}\}$. 
\end{proposition}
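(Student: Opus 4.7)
The proof plan is almost entirely definitional, exploiting the fact that the defining equation $\sum_{\theta_i \in \mathcal{D}} c_i \mathrm{Pr}(\cdot|\theta_i, \eta_m) = 0$ is an indexed family of constraints, and enlarging the index set $\mathcal{M}$ only adds constraints.

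For the first statement (non-distinguishability is inherited by subfamilies), I would argue directly. Assume the elements within $\mathcal{D}$ are observationally non-distinguishable from $\{\mathcal{P}(\eta_m), m \in \mathcal{M}_2\}$. By definition, there exist non-zero scalars $c_1, c_2, \ldots$ such that $\sum_{\theta_i \in \mathcal{D}} c_i \mathrm{Pr}(\cdot|\theta_i, \eta_m) = 0$ for every $m \in \mathcal{M}_2$. Since $\mathcal{M}_1 \subseteq \mathcal{M}_2$, the same identity holds in particular for every $m \in \mathcal{M}_1$, and the same scalars $c_i$ serve as a witness of non-distinguishability from $\{\mathcal{P}(\eta_m), m \in \mathcal{M}_1\}$.

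For the second statement, I would prove it by contrapositive, which reduces it to the first statement. Suppose the elements within $\mathcal{D}$ are observationally distinguishable from $\{\mathcal{P}(\eta_m), m \in \mathcal{M}_1\}$, but, toward a contradiction, are observationally non-distinguishable from $\{\mathcal{P}(\eta_m), m \in \mathcal{M}_2\}$. By the first statement applied to the pair $(\mathcal{M}_1, \mathcal{M}_2)$, they would also be non-distinguishable from $\{\mathcal{P}(\eta_m), m \in \mathcal{M}_1\}$, contradicting the hypothesis. For the strong version I would argue directly: if $\sum_{\theta_i \in \mathcal{D}} c_i \mathrm{Pr}(\cdot|\theta_i, \eta_m) = 0$ holds for all $m \in \mathcal{M}_2$, then in particular it holds for all $m \in \mathcal{M}_1$, so strong distinguishability from the smaller family forces $c_1 = c_2 = \cdots = 0$, establishing strong distinguishability from the larger family.

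There is essentially no obstacle here — the proposition is a soft, bookkeeping-level consequence of the definition and the monotonicity of constraint systems under enlarging the index set. The only small care needed is to keep the two directions (non-distinguishability transfers to subfamilies; distinguishability transfers to superfamilies) logically aligned, and to separately verify the strong version rather than just the plain one, since the strong statement is a universal claim over all coefficient sequences rather than an existential one.
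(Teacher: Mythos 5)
Your proof is correct, and it takes the same definitional route the paper implicitly relies on: the paper gives no explicit proof of this proposition, labeling it straightforward from the definitions, and your argument (witness scalars restrict to subfamilies; distinguishability follows by contraposition; the strong version follows because the universal implication over the larger constraint family is weaker in hypothesis) is exactly the bookkeeping the paper omits. No gaps.
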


Then we formally define the concepts of blind zone and blind region within the parameter space according to their properties of observational distinguishability.

\begin{definition}[Blind Zone]\label{def:BZ}
A set $\mathcal{BZ}(\eta_{m}, m \in \mathcal{M}) \subseteq \Theta$ is a \textbf{blind zone} of $\{\mathcal{P}(\eta_{m}), m \in \mathcal{M}\}$ if it contains at least two elements, and 
\begin{itemize}
    \item[(i)] The elements within $\mathcal{BZ}(\eta_{m}, m \in \mathcal{M})$ are observationally non-distinguishable from $\{\mathcal{P}(\eta_{m}), m \in \mathcal{M}\}$;
    \item[(ii)] Any element outside $\mathcal{BZ}(\eta_{m}, m \in \mathcal{M})$ is observationally distinguishable to the elements within $\mathcal{BZ}(\eta_{m}, m \in \mathcal{M})$ from $\{\mathcal{P}(\eta_{m}), m \in \mathcal{M}\}$.
\end{itemize}
\end{definition}

\begin{proposition}[Combination of Blind Zones]\label{prop:comb BZ}
Let $\mathcal{BZ}(\eta_{1})$ and $\mathcal{BZ}(\eta_{2})$ be blind zones of $\{\mathcal{P}(\eta_{1})\}$ and $\{\mathcal{P}(\eta_{2})\}$ respectively. The blind zone of $\{\mathcal{P}(\eta_{1}),\mathcal{P}(\eta_{2})\}$, $\mathcal{BZ}(\eta_{1}) \otimes \mathcal{BZ}(\eta_{2})$, is $\mathcal{BZ}(\eta_{1}) \cap \mathcal{BZ}(\eta_{2})$ if the elements within $\mathcal{BZ}(\eta_{1}) \cap \mathcal{BZ}(\eta_{2})$ are observationally non-distinguishable from $\{\mathcal{P}(\eta_{1}),\mathcal{P}(\eta_{2})\}$; otherwise, it is $\emptyset$. 
\end{proposition}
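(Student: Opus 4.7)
Proof plan:

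The approach is to verify the two defining conditions of Definition 4.2 directly on the candidate set $\mathcal{BZ}(\eta_1) \cap \mathcal{BZ}(\eta_2)$ under the combined context, leveraging Proposition 1 and the defining properties of the individual blind zones. In the non-empty case, condition (i) of Definition 4.2 holds by the hypothesis that the elements of $\mathcal{BZ}(\eta_1) \cap \mathcal{BZ}(\eta_2)$ are observationally non-distinguishable from $\{\mathcal{P}(\eta_1), \mathcal{P}(\eta_2)\}$. For condition (ii), I would take an arbitrary $\theta$ outside the intersection, so $\theta \notin \mathcal{BZ}(\eta_j)$ for some $j \in \{1,2\}$, say $j = 1$ without loss of generality. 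Property (ii) of $\mathcal{BZ}(\eta_1)$ gives observational distinguishability of $\theta$ from the elements of $\mathcal{BZ}(\eta_1)$ under $\{\mathcal{P}(\eta_1)\}$; this passes to the subset $\mathcal{BZ}(\eta_1) \cap \mathcal{BZ}(\eta_2) \subseteq \mathcal{BZ}(\eta_1)$ (any vanishing relation involving $\theta$ and the intersection would also be one involving $\theta$ and the larger set) and, by Proposition 1, it is preserved when the context is extended to $\{\mathcal{P}(\eta_1), \mathcal{P}(\eta_2)\}$.

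For the empty case, where the intersection's elements are observationally distinguishable from $\{\mathcal{P}(\eta_1), \mathcal{P}(\eta_2)\}$: condition (i) of Definition 4.2 fails for the intersection itself, and since distinguishability (linear independence of the relevant probability measures) is inherited by subsets, it fails for every subset of two or more elements as well. Moreover, Proposition 1 forces any blind zone of $\{\mathcal{P}(\eta_1), \mathcal{P}(\eta_2)\}$ produced from the pair $(\mathcal{BZ}(\eta_1), \mathcal{BZ}(\eta_2))$ to consist of elements non-distinguishable under each single context, hence to lie inside both $\mathcal{BZ}(\eta_1)$ and $\mathcal{BZ}(\eta_2)$ --- i.e., inside the intersection, which has just been ruled out. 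The combination therefore reduces to $\emptyset$.

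The main subtlety, and the place I expect the bookkeeping to be heaviest, is pinning down a careful interpretation of what ``an outside element $\theta$ is observationally distinguishable from the elements within a blind zone $\mathcal{BZ}$'' means: since $\mathcal{BZ}$ itself admits a non-trivial vanishing linear combination, the union $\{\theta\} \cup \mathcal{BZ}$ is never literally observationally distinguishable in the sense of Definition 4.1. The workable reading is that no non-trivial vanishing relation with $c_\theta \neq 0$ exists, equivalently that $\mathrm{Pr}(\cdot|\theta, \eta_m)$ does not lie in the linear span of $\{\mathrm{Pr}(\cdot|\theta', \eta_m) : \theta' \in \mathcal{BZ}\}$ uniformly in $m$. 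Once this reading is granted, the two monotonicity facts used above --- distinguishability is inherited on subsets of the parameter set and preserved under enlargement of the context set (Proposition 1) --- complete both halves of the argument.
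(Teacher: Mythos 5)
Your proposal is correct and follows essentially the same route as the paper's proof: verify condition (ii) of the blind-zone definition by observing that any element outside the intersection lies outside at least one of $\mathcal{BZ}(\eta_{1})$, $\mathcal{BZ}(\eta_{2})$, and then lift the single-context distinguishability to the combined context via Proposition \ref{prop:extened contexts}. You are somewhat more thorough than the paper---explicitly handling the empty case, the restriction to the subset $\mathcal{BZ}(\eta_{1}) \cap \mathcal{BZ}(\eta_{2})$, and the interpretive subtlety about what distinguishability from a blind zone means---but the core argument is identical.
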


\begin{proof}
According to Definition \ref{def:BZ}, any element within $\Theta\setminus \mathcal{BZ}(\eta_{1})$ is observationally distinguishable to the elements within $\mathcal{BZ}(\eta_{1})$ from $\{\mathcal{P}(\eta_{1})\}$. 
Then from Proposition \ref{prop:extened contexts}, it is also observationally distinguishable to the elements within $\mathcal{BZ}(\eta_{1})$ from $\{\mathcal{P}(\eta_{1}), \mathcal{P}(\eta_{2})\}$.
Similarly, any element within $\Theta\setminus \mathcal{BZ}(\eta_{2})$ is observationally distinguishable to the elements within $\mathcal{BZ}(\eta_{2})$ from $\{\mathcal{P}(\eta_{1}), \mathcal{P}(\eta_{2})\}$.
Therefore, any element within $\Theta\setminus (\mathcal{BZ}(\eta_{1})\cap \mathcal{BZ}(\eta_{2}))$ is observationally distinguishable to the elements within $\mathcal{BZ}(\eta_{1})\cap \mathcal{BZ}(\eta_{2})$ from $\{\mathcal{P}(\eta_{1}), \mathcal{P}(\eta_{2})\}$.
\end{proof}

\begin{definition}[Blind Region]\label{def:BR}
The \textbf{blind region} $\mathcal{BR}(\eta_{m}, m \in \mathcal{M})$ of $\{\mathcal{P}(\eta_{m}), m \in \mathcal{M}\}$ is defined as the union of all possible blind zones,  
$
    \mathcal{BR}(\eta_{m}, m \in \mathcal{M})= \bigcup_{i} \mathcal{BZ}_{i}(\eta_{m}, m \in \mathcal{M}),
$
where $\mathcal{BZ}_{i}(\eta_{m}, m \in \mathcal{M}) \subseteq \Theta$ is a sequence of disjoint blind zone of $\{\mathcal{P}(\eta_{m}), m \in \mathcal{M}\}$.
\end{definition}

\begin{proposition}[Combination of Blind Regions]\label{prop:comb BR}
Let the blind regions of $\{\mathcal{P}(\eta_{1})\}$ and $\{\mathcal{P}(\eta_{2})\}$ be $\mathcal{BR}(\eta_{1})=\bigcup_{i}\mathcal{BZ}_{i}(\eta_{1})$ and $\mathcal{BR}(\eta_{2})=\bigcup_{j}\mathcal{BZ}_{j}(\eta_{2})$ respectively.
The blind region of $\{\mathcal{P}(\eta_{1}),\mathcal{P}(\eta_{2})\}$,
$\mathcal{BR}(\eta_{1}) \oplus \mathcal{BR}(\eta_{2})$, is $\bigcup_{i,j} \mathcal{BZ}_{i}(\eta_{1}) \otimes \mathcal{BZ}_{j}(\eta_{2})$. 
\end{proposition}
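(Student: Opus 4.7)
The plan is to prove equality of the two sets by establishing mutual containment, using Proposition 2 for the forward direction and Proposition 1 together with the maximality inherent in Definition \ref{def:BZ} for the reverse direction. First I would note that for each pair $(i,j)$, Proposition 2 already tells us that $\mathcal{BZ}_i(\eta_1)\otimes \mathcal{BZ}_j(\eta_2)$ is either empty or a blind zone of $\{\mathcal{P}(\eta_1),\mathcal{P}(\eta_2)\}$; in either case, the set is contained in $\mathcal{BR}(\eta_1)\oplus \mathcal{BR}(\eta_2)$. Taking the union over $i,j$ yields one inclusion immediately.

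For the reverse inclusion, I would fix an arbitrary $\theta \in \mathcal{BR}(\eta_1)\oplus \mathcal{BR}(\eta_2)$. By Definition \ref{def:BR}, $\theta$ belongs to some blind zone $\mathcal{BZ}$ of the combined context. The central step is to show that $\mathcal{BZ}$ is contained in some $\mathcal{BZ}_i(\eta_1) \cap \mathcal{BZ}_j(\eta_2)$. Proposition \ref{prop:extened contexts} implies the elements of $\mathcal{BZ}$ are observationally non-distinguishable from $\{\mathcal{P}(\eta_1)\}$ alone and from $\{\mathcal{P}(\eta_2)\}$ alone. Because the blind zones listed in $\mathcal{BR}(\eta_1)$ are disjoint and each is maximal (condition (ii) in Definition \ref{def:BZ} forbids adjoining any outside element to the non-distinguishable subset), the non-distinguishable elements of $\mathcal{BZ}$ must all lie in a single $\mathcal{BZ}_i(\eta_1)$; symmetrically, they lie in a single $\mathcal{BZ}_j(\eta_2)$. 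Hence $\mathcal{BZ}\subseteq \mathcal{BZ}_i(\eta_1)\cap \mathcal{BZ}_j(\eta_2)$. Since the elements of $\mathcal{BZ}\subseteq \mathcal{BZ}_i(\eta_1)\cap \mathcal{BZ}_j(\eta_2)$ are non-distinguishable from $\{\mathcal{P}(\eta_1),\mathcal{P}(\eta_2)\}$, Proposition \ref{prop:comb BZ} identifies this intersection with $\mathcal{BZ}_i(\eta_1)\otimes \mathcal{BZ}_j(\eta_2)$. Applying maximality of $\mathcal{BZ}$ itself then forces the equality $\mathcal{BZ} = \mathcal{BZ}_i(\eta_1)\otimes \mathcal{BZ}_j(\eta_2)$, which in particular places $\theta$ in the claimed union.

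Finally, I would verify that the collection $\{\mathcal{BZ}_i(\eta_1)\otimes \mathcal{BZ}_j(\eta_2)\}_{i,j}$ is pairwise disjoint, so that its union is a legitimate blind region in the sense of Definition \ref{def:BR}. For distinct pairs $(i,j)\neq (i',j')$ at least one index differs, and the containment $\mathcal{BZ}_i(\eta_1)\otimes \mathcal{BZ}_j(\eta_2)\subseteq \mathcal{BZ}_i(\eta_1)\cap \mathcal{BZ}_j(\eta_2)$ together with the disjointness of the original blind zones within $\mathcal{BR}(\eta_1)$ and $\mathcal{BR}(\eta_2)$ immediately implies disjointness of the combined zones.

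The hardest step, and the one that needs the most careful justification, is the claim in the second paragraph that a blind zone $\mathcal{BZ}$ of the combined context cannot straddle two distinct $\mathcal{BZ}_i(\eta_1)$. This is essentially a statement that the ``non-distinguishability'' relation, defined by Definition \ref{def:BZ} via the existence of an all-nonzero vanishing linear combination of likelihoods, respects the decomposition of $\mathcal{BR}(\eta_1)$ into disjoint maximal blind zones; formalizing this will rely on condition (ii) in Definition \ref{def:BZ} applied to any putative element of $\mathcal{BZ}$ that would sit in a different $\mathcal{BZ}_{i'}(\eta_1)$, together with the fact that every element of $\mathcal{BZ}$ participates in the same all-nonzero dependence under $\mathcal{P}(\eta_1)$.
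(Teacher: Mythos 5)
Your proof is correct and follows essentially the same route as the paper's: both rest on Proposition \ref{prop:extened contexts} to rule out blind zones meeting $\Theta\setminus(\mathcal{BR}(\eta_{1})\cap\mathcal{BR}(\eta_{2}))$ and on Proposition \ref{prop:comb BZ} to identify the surviving zones with the pairwise combinations $\mathcal{BZ}_{i}(\eta_{1})\otimes\mathcal{BZ}_{j}(\eta_{2})$. The one place you go beyond the paper is the reverse inclusion --- your maximality argument that a blind zone of the combined context cannot straddle two distinct $\mathcal{BZ}_{i}(\eta_{1})$ --- which the paper simply asserts by saying it ``takes all possible combinations''; your version is the more careful one, and the step you flag as hardest is precisely the step the paper leaves implicit.
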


\begin{proof}
According to Definition \ref{def:BR}, the elements within $\Theta\setminus \mathcal{BR}(\eta_{1})$ are strongly observationally distinguishable from $\{\mathcal{P}(\eta_{1})\}$. Then from Proposition \ref{prop:extened contexts}, they are also strongly observationally distinguishable from $\{\mathcal{P}(\eta_{1}),\mathcal{P}(\eta_{2})\}$.
Similarly, the elements within $\Theta\setminus \mathcal{BR}(\eta_{2})$ are strongly observationally distinguishable from $\{\mathcal{P}(\eta_{1}),\mathcal{P}(\eta_{2})\}$.
Therefore, $\Theta \setminus (\mathcal{BR}(\eta_{1}) \cap \mathcal{BR}(\eta_{2}))$ cannot be a blind zone of $\{\mathcal{P}(\eta_{1}),\mathcal{P}(\eta_{2})\}$.

For the elements within $\mathcal{BR}(\eta_{1}) \cap \mathcal{BR}(\eta_{2})$, we take all possible combinations between the blind zones $\mathcal{BZ}_{i}(\eta_{1})$ of $\{\mathcal{P}(\eta_{1})\}$ and the blind zones $\mathcal{BZ}_{j}(\eta_{2})$ of $\{\mathcal{P}(\eta_{2})\}$ to obtain the blind zones of $\{\mathcal{P}(\eta_{1}),\mathcal{P}(\eta_{2})\}$ according to Proposition \ref{prop:comb BZ}. Therefore, $\mathcal{BR}(\eta_{1}) \oplus \mathcal{BR}(\eta_{2})$ is the union of all possible blind zones of $\{\mathcal{P}(\eta_{1}),\mathcal{P}(\eta_{2})\}$, and according to Definition \ref{def:BR}, it is the blind region of $\{\mathcal{P}(\eta_{1}),\mathcal{P}(\eta_{2})\}$.
\end{proof}

Now the Bayesian consistency conditions can be derived based on the blind region of the observed dataset during the system trajectory:

\begin{theorem}[Bayesian Consistency Conditions]\label{thm: consistency}
Suppose 
\begin{itemize}
    \item[(i)] The prior distribution $\pi_{0}$ has non-zero probability at $\theta^{\ast}$,
    \item[(ii)] (Combinationally Identifiable) There exist a collection of convergent subsequences $\{\{x^{m}_{k},u^{m}_{k}\}_{k=1:\infty}, m \in \mathcal{M} \subseteq \mathbb{N}\}$ during the system trajectory, such that the combination of blind regions of $\mathcal{P}(\eta_{m})$ in the converged context $\eta_{m}=[x^{m}_{\infty},u^{m}_{\infty}]$ is empty, i.e., 
    $$\mathcal{BR}(\eta_{1}) \oplus \mathcal{BR}(\eta_{2}) \oplus \cdots \oplus \mathcal{BR}(\eta_{m}) \oplus \cdots = \emptyset,$$
    where $\mathcal{BR}(\eta_{m})$ is the blind region of $\mathcal{P}(\eta_{m})$,
\end{itemize}
then the posterior distribution of $\theta$ converges to the degenerated distribution $\delta_{\theta^{\ast}}$, i.e. $\lim_{k\rightarrow \infty} \pi_{k}=\delta_{\theta^{\ast}}$, w.p.1. 
\end{theorem}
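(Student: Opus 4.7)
The plan is to show that, for every $\theta\neq\theta^{\ast}$, the likelihood ratio
$L_k(\theta):=\prod_{i=1}^{k} q(x_i;\theta,x_{i-1},u_{i-1})/q(x_i;\theta^{\ast},x_{i-1},u_{i-1})$
tends to zero almost surely under the law induced by $\theta^{\ast}$, and then to pass this fact to the posterior via the closed-form representation
$\pi_k(\theta)=\pi_0(\theta)\,L_k(\theta)/\sum_{\theta'\in\Theta}\pi_0(\theta')\,L_k(\theta')$
obtained by unrolling the Bayes recursion \eqref{eqn:posterior distribution}. Condition~(i), together with the identity $L_k(\theta^{\ast})\equiv 1$, will keep the normalizer uniformly bounded below by $\pi_0(\theta^{\ast})>0$ and will let us turn the per-$\theta$ statement into $\pi_k\to\delta_{\theta^{\ast}}$, with the handling of the countable tail postponed to the third paragraph. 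A direct computation using the Markov property and $\int q(x;\theta,\cdot)\,dx=1$ shows that $\{L_k(\theta)\}$ is itself a nonnegative martingale under the $\theta^{\ast}$-measure with $\mathbb{E}[L_k(\theta)]=1$, so Doob's convergence theorem supplies $L_k(\theta)\to L_{\infty}(\theta)\ge 0$ a.s.\ for free; the real work is to identify $L_{\infty}(\theta)=0$.

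Next I would set $Y_i(\theta):=\log\bigl(q(x_i;\theta,x_{i-1},u_{i-1})/q(x_i;\theta^{\ast},x_{i-1},u_{i-1})\bigr)$, so that $\log L_k(\theta)=\sum_{i=1}^k Y_i(\theta)$ with the nonpositive conditional drift $\mathbb{E}[Y_i(\theta)\mid\mathcal{F}_{i-1}]=-D_{\mathrm{KL}}\bigl(q(\cdot;\theta^{\ast},x_{i-1},u_{i-1})\,\|\,q(\cdot;\theta,x_{i-1},u_{i-1})\bigr)$. Condition~(ii) enters here: the combined blind region being empty prevents the pair $\{\theta,\theta^{\ast}\}$ from being contained in any blind zone of $\{\mathcal{P}(\eta_m):m\in\mathcal{M}\}$, so Definition~\ref{def:BZ} forces the existence of some $m=m(\theta)\in\mathcal{M}$ with $q(\cdot;\theta,\eta_m)\neq q(\cdot;\theta^{\ast},\eta_m)$, hence a strictly positive KL divergence at $\eta_m$. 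Continuity of $q$ in its arguments together with the convergence $(x_i^{m(\theta)},u_i^{m(\theta)})\to\eta_m$ then yields $\mathbb{E}[Y_i(\theta)\mid\mathcal{F}_{i-1}]\le -\varepsilon(\theta)<0$ along the subsequence for all sufficiently large $i$. Decomposing $\log L_k(\theta)$ into its predictable compensator, which drifts to $-\infty$ because the subsequence is infinite, plus a zero-mean martingale remainder, and invoking a conditional strong law of large numbers for the remainder, drives $\log L_k(\theta)\to -\infty$ a.s., i.e.\ $L_{\infty}(\theta)=0$. A countable union of exceptional null sets over $\theta\neq\theta^{\ast}$ then upgrades this to a single almost-sure event on which $L_{\infty}(\theta)=0$ simultaneously for every wrong parameter.

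To close the loop, I would let $S_k:=\sum_{\theta\neq\theta^{\ast}}\pi_0(\theta)L_k(\theta)$, so that the first-paragraph formula reduces to $\pi_k(\theta^{\ast})=\pi_0(\theta^{\ast})/(\pi_0(\theta^{\ast})+S_k)$. As a sum of nonnegative martingales with $\mathbb{E}[S_k]=1-\pi_0(\theta^{\ast})<\infty$, $S_k$ converges a.s.\ to a limit $S_{\infty}\ge 0$. For any finite $F\subset\Theta\setminus\{\theta^{\ast}\}$ the finite piece $\sum_{\theta\in F}\pi_0(\theta)L_k(\theta)$ tends to $0$ by the previous paragraph, while Fatou's lemma bounds the tail via $\mathbb{E}\bigl[\liminf_k \sum_{\theta\notin F\cup\{\theta^{\ast}\}}\pi_0(\theta)L_k(\theta)\bigr]\le \sum_{\theta\notin F\cup\{\theta^{\ast}\}}\pi_0(\theta)$, which is arbitrarily small by enlarging $F$. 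Hence $\mathbb{E}[S_{\infty}]=0$, so $S_{\infty}=0$ a.s.\ and $\pi_k(\theta^{\ast})\to 1$ a.s.; combined with the pointwise bound $\pi_k(\theta)\le \pi_0(\theta)L_k(\theta)/\pi_0(\theta^{\ast})\to 0$ for each $\theta\neq\theta^{\ast}$, this delivers $\pi_k\to\delta_{\theta^{\ast}}$ in total variation, which is the consistency claim.

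The hard part will be the conditional SLLN step in the second paragraph on a possibly non-compact state space: without uniform bounds on $q$ from above and below, the increments $Y_i(\theta)$ are not obviously bounded and off-the-shelf Azuma- or Kolmogorov-type martingale inequalities do not directly apply. The natural workaround is to exploit the continuity and strict positivity of $q$ supplied by the transition-kernel assumption to obtain uniform bounds on $\log q$ over any compact neighborhood of $\eta_{m(\theta)}$, and then to use the convergence $(x_i^{m(\theta)},u_i^{m(\theta)})\to\eta_{m(\theta)}$ to confine the relevant indices inside such a neighborhood for all sufficiently large $i$; on this localized event the martingale differences are bounded, the remainder is asymptotically negligible compared to the compensator, and $\log L_k(\theta)\to-\infty$ follows.
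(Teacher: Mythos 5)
Your proposal is a genuinely different route from the paper's. The paper never works with per-parameter likelihood ratios: it first shows (Lemma~\ref{lemma:likelihood convergence}, using only condition (i)) that the posterior-predictive kernel $\hat q=\sum_\theta\pi_{k-1}(\theta)q(\cdot;\theta,\cdot,\cdot)$ converges to the true kernel, by telescoping $\mathbb{E}[\log\pi_k(\theta^{\ast})]$ to bound the summed Kullback--Leibler divergences by $-\log\pi_0(\theta^{\ast})$ and then applying Borel--Cantelli and Pinsker; it then extracts a convergent subsequence of the posterior vectors and passes to the limit in the identity $q_{\theta^{\ast}}-\hat q=[1-\pi(\theta^{\ast})]q_{\theta^{\ast}}-\sum_{\theta\neq\theta^{\ast}}\pi(\theta)q_{\theta}$ along each context subsequence, invoking the \emph{strong} observational distinguishability (linear independence of the kernels over the contexts $\{\eta_m\}$) to force all coefficients to zero. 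That linear-independence content of condition (ii) is essential to the paper's argument because it must rule out a nontrivial \emph{mixture} of wrong kernels matching $q_{\theta^{\ast}}$. Your route, by contrast, would only need pairwise distinguishability of each $\theta\neq\theta^{\ast}$ from $\theta^{\ast}$ (which is indeed implied by the empty combined blind region), so if it worked it would prove the theorem under a strictly weaker identifiability hypothesis --- a real gain, and your Fatou treatment of the countable tail in the third paragraph is sound.

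The gap is exactly where you flagged it, and your proposed workaround does not close it. To conclude $\log L_k(\theta)\to-\infty$ from a divergent predictable compensator you need a conditional SLLN for the martingale remainder $\sum_i\bigl(Y_i(\theta)-\mathbb{E}[Y_i(\theta)\mid\mathcal{F}_{i-1}]\bigr)$, which requires control of the conditional variances (or some higher moments) of $Y_i(\theta)=\log\bigl(q(x_i;\theta,x_{i-1},u_{i-1})/q(x_i;\theta^{\ast},x_{i-1},u_{i-1})\bigr)$. Localizing the indices so that the \emph{context} $(x_{i-1},u_{i-1})$ lies in a compact neighborhood of $\eta_{m(\theta)}$ does not bound $Y_i(\theta)$, because $Y_i(\theta)$ is evaluated at the \emph{next} state $x_i$, which still ranges over the whole (possibly non-compact) state space; for instance, for Gaussian kernels whose mean depends on $\theta$ the log-ratio is affine and unbounded in $x_i$ even with the context frozen. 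The paper's standing assumption on the transition kernel (continuity, strict positivity, bounded derivative in $x'$) does not supply the needed moment bounds, so the claim that ``on this localized event the martingale differences are bounded'' is false in general, and without it Doob's theorem alone cannot exclude $L_\infty(\theta)>0$. The step is repairable without any moment assumptions --- e.g., replace the SLLN by the Hellinger-affinity supermartingale argument: $N_k:=\sqrt{L_k(\theta)}\big/\prod_{i\le k}\alpha_i$ with $\alpha_i:=\int\sqrt{q_{\theta}q_{\theta^{\ast}}}\,dx_i\le 1$ is a nonnegative martingale, hence converges a.s., while condition (ii) forces $\alpha_i\le 1-\delta$ infinitely often so that $\prod_i\alpha_i\to 0$ and therefore $L_k(\theta)\to 0$ --- but as written your hard step does not go through.
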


\begin{proof}
The proof can be found in Appendix \ref{appendix:consistancy}.
\end{proof}

\noindent \textbf{Remark 1.} Theorem \ref{thm: consistency} provides explicit conditions that guarantee parameter convergence in a stochastic setting, which can be considered as an analogy to the \textit{persistency of excitation}\cite{Willems2005AExcitation} in a deterministic setting.
It can function as a general tool to verify the consistency guarantees of general online learning-based control algorithms on any specific system.

\noindent \textbf{Remark 2.} The introduction of the concepts of blind zone and region, as well as the operations to combine them, provides an intuitive way to interpret the consistency conditions. It further offers insights to design control policies that explicitly balance exploration and exploitation, for instance, the combined blind regions monitored online can serve as the interested states to explore in active inference-based methods \cite{Smith2022AData}.

\section{Risk-averse MPC and Stability}\label{sec:risk-averse MPC}

In this section, we demonstrate a risk-averse MPC framework to account for the uncertainty encoded by probability distributions. The nested formulation of risk-averse MPC is first introduced in detail in Subsection \ref{subsec:DP}. Then in Subsection \ref{subsec:risk-averse stability}, we introduce a general notion of \textit{risk-averse asymptotic stability}, and rigorously prove the risk-averse Lyapunov stability theorem and risk-averse MPC stability conditions.

\subsection{Dynamic Programming of Risk-averse MPC}\label{subsec:DP}

The entanglement of parametric uncertainty and disturbances renders the original formulation (\ref{eqn: DRO}) intractable. We address this via a more cautious, \textit{nested} formulation (\ref{eqn: nested DRO}) that admits efficient solution via DP.

Consider a closed-loop dynamic system controlled by a certain control policy $\boldsymbol{\mu}$:
\begin{equation}\label{eqn: closed-loop system}
    x_{k+1}=f(x_{k},\mu_{0}(x_{k}),w_{k},\theta).
\end{equation}
Let $\phi(i;x,\theta,\boldsymbol{\mu},\boldsymbol{w})$ denote the solution $x_{i}$ of (\ref{eqn: closed-loop system}) at time $i$ if the initial state at time 0 is $x$, the control at $(x,i)$ is $\mu_{0}(x)$, and the disturbance sequence is $\boldsymbol{w}$. 

Since the process noises are assumed to be independent, the value function can be written as a nested formulation:
\begin{equation*}
    V_{N} (x, \theta, \boldsymbol{\mu}) = \mathbb{E}_{w_{0}}\left[  
    \mathbb{E}_{w_{1}}\left[  
    \cdots \mathbb{E}_{w_{N-1}}\left[ \cdot  \right]
    \right]
    \right].
\end{equation*}

It follows that 
\begin{equation}\label{eqn: nested inequality}
    \sup_{\theta \in \mathcal{A}} V_{N} (x, \theta, \boldsymbol{\mu}) \le \sup_{\theta \in \mathcal{A}} \mathbb{E}_{w_{0}}\left[  
    \sup_{\theta \in \mathcal{A}} \mathbb{E}_{w_{1}}\left[  
    \cdots \sup_{\theta \in \mathcal{A}} \mathbb{E}_{w_{N-1}}\left[ \cdot  \right]
    \right]
    \right].
\end{equation}
Strict inequality might hold due to that, on the right-hand side of (\ref{eqn: nested inequality}), the maxima w.r.t. $\theta \in \mathcal{A}$ can depend on the realization of the noise process \cite{Shapiro2009LecturesProgramming}.

For any scalar function $g:\phi(0:i;x,\theta,\boldsymbol{\mu},\boldsymbol{w})\rightarrow \mathbb{R}$, define the operators $\gamma_i[\cdot]$ and $\bar{\gamma_i}[\cdot]$ as
$$\gamma_{i}[g]:=\sup_{\theta \in \mathcal{A}} \mathbb{E}_{w_{i}}[g(\phi(0:i;x,\theta,\boldsymbol{\mu},\boldsymbol{w}))],$$ 
$$\bar{\gamma}_{i}[g]:=\gamma_{0}[\gamma_{1}[\cdots \gamma_{i}[g(\phi(0:i;x,\theta,\boldsymbol{\mu},\boldsymbol{w}))]]].$$
Then the right-hand side of \eqref{eqn: nested inequality} leads to the following \textit{nested} formulation of the risk-averse MPC problem \cite{Rawlings2017ModelDesign}:
\begin{equation}\label{eqn: nested DRO}
    \begin{split}
        \inf_{u_{0}} & l(x_{0},u_{0}) + 
        \gamma_{0}  \left[ \inf_{u_{1}} l(x_{1},u_{1}) 
        + \gamma_{1} \Biggl[ \dots \right.  \\
        &\left. \left. + \inf_{u_{N-1}} l(x_{N-1},u_{N-1}) + \gamma_{N-1} \left[ V_{f}(x_{N})\right] \right] \right].
    \end{split}
\end{equation}

If the process noises $\boldsymbol{w}$ are assumed to be i.i.d, then we can omit the subscript $i$ in $\gamma_{i}[\cdot]$.
For simplicity, in the remainder of this paper, we use $\gamma[\cdot]$ as an abbreviation of $\gamma_{i}[\cdot]$.

The nested formulation leads to the DP equations. 
The DP recursions are given as
\begin{equation}\label{eqn:DP value}
    \begin{split}
        V_{i}^{\ast}(x)= \inf_{u} l(x,u) + \gamma [V_{i-1}^{\ast}(f(x,u,w,\theta))],
    \end{split}
\end{equation}
\begin{equation}\label{eqn:DP policy}
    \begin{split}
        \kappa_{i}(x)= \arg \inf_{u} l(x,u) + \gamma [V_{i-1}^{\ast}(f(x,u,w,\theta))],
    \end{split}
\end{equation}
with boundary conditions
\begin{equation}
    V_{0}^{\ast}(x)=V_{f}(x),
\end{equation}
where $i$ denotes time to go so that $\kappa_{i}(\cdot):=\mu_{N-i}(\cdot)$.

\subsection{Stability of Risk-averse MPC}\label{subsec:risk-averse stability}

In order to define risk-averse notions of stability, we first introduce an appropriate notion of invariance set from a risk-averse perspective.

\begin{definition}[Risk-averse Positive Invariant]
A set $S$ is Risk-averse Positive Invariant (RAPI) for (\ref{eqn: closed-loop system}) if $x \in S$ implies that $\mathbb{E}_{w}[f(x,\mu_{0}(x),w,\theta)] \in S$ for any $\theta \in \mathcal{A}$.
\end{definition}

To establish stability we make use of Lyapunov theorems that are defined in terms of the function classes $\mathcal{K}$, $\mathcal{K}_{\infty}$, $\mathcal{L}$ and $\mathcal{KL}$:
\begin{itemize}
    \item A function $\alpha : \mathbb{R}_{\ge 0} \rightarrow \mathbb{R}_{\ge 0}$ is said to be of class-$\mathcal{K}$ ($\alpha \in \mathcal{K}$) if it is continuous, zero at zero, and strictly increasing. 
    \item A function $\alpha : \mathbb{R}_{\ge 0} \rightarrow \mathbb{R}_{\ge 0}$ is said to be of class-$\mathcal{K}_{\infty}$ ($\alpha \in \mathcal{K}_{\infty}$) if $\alpha \in \mathcal{K}$ and, in addition, $\lim_{s \rightarrow \infty} \alpha(s) = \infty$.
    \item A function $\sigma : \mathbb{R}_{\ge 0} \rightarrow \mathbb{R}_{\ge 0}$ is said to be of class-$\mathcal{L}$ ($\sigma \in \mathcal{L}$) if it is continuous, strictly decreasing, and $\lim_{s \rightarrow \infty} \sigma(s) = 0$.
    \item A function $\beta : \mathbb{R}_{\ge 0} \times \mathbb{I}_{\ge 0} \rightarrow \mathbb{R}_{\ge 0}$ is said to be of class-$\mathcal{KL}$ ($\beta \in \mathcal{KL}$) if it is class-$\mathcal{K}$ in its first argument and class-$\mathcal{L}$ in its second argument. 
\end{itemize}

Now we can define the risk-averse stability notion as follows.

\begin{definition}[Risk-averse Asymptotically Stable]
Suppose $\mathcal{X}$ is RAPI for (\ref{eqn: closed-loop system}). The origin is Risk-averse Asymptotically Stable (RAAS) for (\ref{eqn: closed-loop system}) in $\mathcal{X}$ if there exists a $\mathcal{KL}$ function $\beta(\cdot)$ such that, for each $x \in \mathcal{X}$ 
\begin{equation}
    \bar{\gamma}_{i-1}[|\phi(i;x,\theta,\boldsymbol{\mu},\boldsymbol{w})|] \le \beta (|x|,i), \quad \forall i \in \mathbb{I}_{\ge 0}.
\end{equation}
\end{definition}

RAAS entails that the origin is stable not only for the system with nominal parameters, but also for those systems with parameters in the ambiguity set of the risk measure. Since all coherent risk measures are lower bounded by the expectation, RAAS is a stronger notion of stability compared to the stability of classical stochastic control. 

Similar to the classical Lyapunov stability theorem, we can derive a risk-averse Lyapunov function which leads to RAAS.

\begin{theorem}[Risk-averse Lyapunov Stability Theorem]\label{thm:RA Lyapunov stability}
Suppose $\mathcal{X}$ is RAPI for (\ref{eqn: closed-loop system}). If there exists a Lyapunov function $V: \mathbb{R}^{n_{x}} \rightarrow \mathbb{R}_{\ge 0}$ in $\mathcal{X}$ for (\ref{eqn: closed-loop system}) such that for any $x \in \mathcal{X}$:
\begin{itemize}
    \item[(i)] $\alpha_{1}(|x|) \le V(x) \le \alpha_{2}(|x|)$, where $\alpha_{1},\alpha_{2} \in \mathcal{K}_{\infty}$,
    \item[(ii)] $\gamma[V(f(x,\mu_{0}(x),w,\theta))] - V(x) \le -\rho(x)$, where $\rho$ is a continuous positive definite function,
\end{itemize}
then the origin is RAAS in $\mathcal{X}$ for (\ref{eqn: closed-loop system}).
\end{theorem}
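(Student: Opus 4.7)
The plan is to mirror the classical discrete-time Lyapunov stability proof while carefully propagating the Lyapunov decrease through the nested risk-averse operator $\bar{\gamma}$. I would proceed in three stages: (a) convert the stage-wise decrease in condition (ii) into a strict contraction on $V$ itself; (b) iterate this contraction through all layers of $\bar{\gamma}$ using its monotonicity together with a Jensen-type inequality valid for $\sup_{\theta}\mathbb{E}_{w}[\cdot]$; and (c) translate the resulting $\mathcal{KL}$ bound on $V$ back to one on $|\phi|$ through the sandwich in condition (i).

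First I would establish a one-step contraction on $V$. Setting $V_{i}:=V(\phi(i;x,\theta,\boldsymbol{\mu},\boldsymbol{w}))$ and applying condition (ii) pointwise at the random state $\phi(i)$ yields the conditional bound $\gamma_{i}[V_{i+1}]\le V_{i}-\rho(\phi(i))$ as a function of $\phi(0{:}i)$. Because $\rho$ is continuous positive definite and $V(x)\le\alpha_{2}(|x|)$, a standard comparison-function argument (e.g.\ Sontag's lemma) provides a class-$\mathcal{K}$ function $\bar{\alpha}$---which may be taken convex by passing to a convex class-$\mathcal{K}$ minorant---such that $\rho(x)\ge\bar{\alpha}(V(x))$. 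Defining $\sigma(s):=s-\bar{\alpha}(s)$ yields a continuous, strictly increasing, concave map satisfying $\sigma(s)<s$ for $s>0$, and hence $\gamma_{i}[V_{i+1}]\le\sigma(V_{i})$.

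Next I would iterate this contraction through $\bar{\gamma}$ by induction on $i$ to obtain $\bar{\gamma}_{i}[V_{i+1}]\le\sigma^{i+1}(V(x))$. Monotonicity of $\bar{\gamma}_{i-1}$ immediately gives $\bar{\gamma}_{i}[V_{i+1}]=\bar{\gamma}_{i-1}[\gamma_{i}[V_{i+1}]]\le\bar{\gamma}_{i-1}[\sigma(V_{i})]$. The central observation is that for each layer $\gamma_{j}=\sup_{\theta\in\mathcal{A}}\mathbb{E}_{w_{j}}[\cdot]$ and any concave increasing $\sigma$, Jensen's inequality on the inner expectation followed by commutation of $\sup$ with the monotone $\sigma$ yields $\gamma_{j}[\sigma(Z)]\le\sigma(\gamma_{j}[Z])$; iterating this through all $i$ layers gives $\bar{\gamma}_{i-1}[\sigma(V_{i})]\le\sigma(\bar{\gamma}_{i-1}[V_{i}])\le\sigma(\sigma^{i}(V(x)))=\sigma^{i+1}(V(x))$, closing the induction. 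A classical comparison-function result (cf.\ Lemma B.15 of Rawlings--Mayne) then furnishes $\beta_{V}\in\mathcal{KL}$ with $\sigma^{n}(s)\le\beta_{V}(s,n)$, so that $\bar{\gamma}_{i}[V_{i+1}]\le\beta_{V}(\alpha_{2}(|x|),i+1)$.

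Finally I would transfer this estimate to $|\phi|$. From $\alpha_{1}(|\phi(i+1)|)\le V_{i+1}$ one gets $|\phi(i+1)|\le\alpha_{1}^{-1}(V_{i+1})$, and applying $\bar{\gamma}_{i}$ with the same Jensen-style commutation---after replacing $\alpha_{1}$ by a smaller convex class-$\mathcal{K}_{\infty}$ minorant so that $\alpha_{1}^{-1}$ is concave---yields $\bar{\gamma}_{i}[|\phi(i+1)|]\le\alpha_{1}^{-1}(\bar{\gamma}_{i}[V_{i+1}])\le\alpha_{1}^{-1}(\beta_{V}(\alpha_{2}(|x|),i+1))$. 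Setting $\beta(s,i):=\alpha_{1}^{-1}(\beta_{V}(\alpha_{2}(s),i))$, which is class $\mathcal{KL}$ as a composition of class-$\mathcal{K}_{\infty}$ and class-$\mathcal{KL}$ functions, produces the desired RAAS bound after re-indexing $i\mapsto i-1$. The main obstacle is executing the interchanges $\bar{\gamma}[\sigma(\cdot)]\le\sigma(\bar{\gamma}[\cdot])$ and $\bar{\gamma}[\alpha_{1}^{-1}(\cdot)]\le\alpha_{1}^{-1}(\bar{\gamma}[\cdot])$ cleanly: both rely on (quasi-)concavity of the comparison functions, and the convex/concave minorant replacements must be installed uniformly so that $\sigma$ is simultaneously class-$\mathcal{K}$, concave, and strictly contractive, keeping the iteration-to-$\mathcal{KL}$ lemma applicable.
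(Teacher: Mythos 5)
Your overall architecture is genuinely different from the paper's: you build a one-step contraction $\gamma[V^{+}]\le\sigma(V)$ and iterate it through the nested operator via Jensen's inequality, whereas the paper telescopes the raw decrease $-\bar{\rho}(V(x_k))$ through $\bar{\gamma}$ using monotonicity together with $\sup_{\theta}a-\sup_{\theta}b\le\sup_{\theta}(a-b)$, and only afterwards shows that the resulting partial-sum function is class-$\mathcal{L}$ in $i$. Your route would be cleaner if it closed, but two of its load-bearing steps do not hold at the stated level of generality.

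First, the bound $\rho(x)\ge\bar{\alpha}(V(x))$ with $\bar{\alpha}\in\mathcal{K}$ is not available globally when $\rho$ is merely continuous positive definite on an unbounded $\mathcal{X}$: such a $\rho$ can decay to zero as $|x|\to\infty$ (e.g.\ $|x|^{2}/(1+|x|^{4})$), so no class-$\mathcal{K}$ lower bound in $|x|$, hence none in $V(x)$, exists. This is precisely why the paper routes through the weaker product decomposition $\rho(x)\ge\alpha(|x|)\sigma(|x|)$ with $\sigma\in\mathcal{L}$ (its Lemma 5) and defers any class-$\mathcal{K}$ lower bound to the bounded interval $[0,V(x_{0}))$, after monotone decrease of $\bar{\gamma}_{i-1}[V(x_i)]$ has already been established from positive definiteness alone. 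Your contraction $\sigma(s)=s-\bar{\alpha}(s)$ therefore cannot be constructed at the outset. Second, and more decisively, the final commutation $\bar{\gamma}_{i}[\alpha_{1}^{-1}(\cdot)]\le\alpha_{1}^{-1}(\bar{\gamma}_{i}[\cdot])$ rests on replacing $\alpha_{1}$ by a convex class-$\mathcal{K}_{\infty}$ minorant, and such a minorant need not exist: any convex function vanishing at the origin satisfies $g(s)\ge g(s_{0})\,s/s_{0}$ for $s\ge s_{0}$, i.e.\ grows at least linearly, so a sublinearly growing $\alpha_{1}$ (say $\sqrt{s}$) admits no nonzero convex minorant on $[0,\infty)$. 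A related tension afflicts the contraction itself: you need $\bar{\alpha}$ convex (so $\sigma$ is concave for Jensen) while simultaneously keeping $s-\bar{\alpha}(s)$ nonnegative and increasing, which forces $\bar{\alpha}'\le 1$ and is not compatible with an arbitrary convex minorant. To salvage your approach you would have to either restrict to compact sublevel sets with pointwise (not just risk-averse) invariance, or abandon the Jensen route for the decrease term in favor of the paper's subadditivity-of-$\sup$ telescoping, which avoids any concavity requirement on the comparison functions.
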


\begin{proof}
The proof can be found in Appendix \ref{appendix:RA Lyapunov stability}.
\end{proof}

We may now state conditions on the stage cost $l$ and the terminal cost $V_{f}$ that ensure RAAS for the risk-averse MPC-controlled system, where the control policy $\mu_{0}(\cdot)$ is chosen as the optimal MPC policy $\kappa_{N}(\cdot)$ obtained by \eqref{eqn:DP policy}.

\begin{theorem}[Risk-averse MPC Stability Conditions]\label{thm:RA MPC stability}
Suppose that,
\begin{itemize}
    \item[(i)] There exists a $\mathcal{K}_{\infty}$ function $\alpha_{1}(\cdot)$ such that $l(x,u) \ge \alpha_{1}(|x|)$, $\forall x \in \mathcal{X}$,
    \item[(ii)] There exists a $\mathcal{K}_{\infty}$ function $\alpha_{2}(\cdot)$ such that $V_{N}^{\ast}(x) \le \alpha_{2}(|x|)$, $\forall x \in \mathcal{X}$,
    \item[(iii)] For all $x \in \mathcal{X}_{f}$, there exists a $u$ such that $\gamma[f(x,u,w,\theta)] \in \mathcal{X}_{f}$, and $\gamma[V_{f}(f(x,u,w,\theta))]-V_{f}(x) \le -l(x,u)$,
\end{itemize}
then the origin is RAAS in $\mathcal{X}$ for the risk-averse MPC-controlled system $x_{k+1}=f(x_{k},\kappa_{N}(x_{k}),w_{k},\theta)$.
\end{theorem}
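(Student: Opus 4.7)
The plan is to use $V_N^*(x)$, the optimal value function defined by the DP recursion \eqref{eqn:DP value}, as a risk-averse Lyapunov function and invoke Theorem \ref{thm:RA Lyapunov stability}. Condition (ii) directly delivers the upper bound $V_N^*(x) \leq \alpha_2(|x|)$. For the lower bound, combining condition (i) with the DP recursion and the fact that $V_{N-1}^*(\cdot) \geq 0$ (implicit from nonnegativity of $l$ and $V_f$) yields $V_N^*(x) \geq \inf_u l(x,u) \geq \alpha_1(|x|)$, since monotonicity and positive homogeneity of the coherent operator $\gamma$ force $\gamma[V_{N-1}^*(\cdot)] \geq 0$.

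The main effort goes into the descent inequality required by Theorem \ref{thm:RA Lyapunov stability}(ii). The pivotal intermediate step is the monotonicity property $V_N^*(y) \leq V_{N-1}^*(y)$ for all reachable $y$, established by the standard MPC trick of extending an $(N-1)$-horizon feasible policy by one step using the terminal controller $\kappa_f(\cdot)$ promised by condition (iii). Given any $(N-1)$-horizon feasible policy whose terminal state $x_{N-1}$ lies in $\mathcal{X}_f$, appending $\kappa_f(x_{N-1})$ produces a feasible $N$-horizon policy. Because $l(x_{N-1},\kappa_f(x_{N-1}))$ is deterministic with respect to $(w,\theta)$, translation equivariance of $\gamma$ gives
\[
\gamma_{N-1}\bigl[l(x_{N-1},\kappa_f(x_{N-1})) + V_f(x_N)\bigr] = l(x_{N-1},\kappa_f(x_{N-1})) + \gamma\bigl[V_f(f(x_{N-1},\kappa_f(x_{N-1}),w,\theta))\bigr] \leq V_f(x_{N-1}).
\]
Substituting this into the nested cost \eqref{eqn: nested DRO} and propagating outward using monotonicity of each $\gamma_i$ shows that the $N$-horizon cost of the extended policy is no larger than the $(N-1)$-horizon cost of the original; infimizing over policies yields the desired monotonicity.

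Combining monotonicity with the DP recursion $V_N^*(x) = l(x,\kappa_N(x)) + \gamma[V_{N-1}^*(f(x,\kappa_N(x),w,\theta))]$ and the monotonicity of $\gamma$ gives
\[
\gamma\bigl[V_N^*(f(x,\kappa_N(x),w,\theta))\bigr] \leq \gamma\bigl[V_{N-1}^*(f(x,\kappa_N(x),w,\theta))\bigr] = V_N^*(x) - l(x,\kappa_N(x)),
\]
so the descent bound $\gamma[V_N^*(x^+)] - V_N^*(x) \leq -l(x,\kappa_N(x)) \leq -\alpha_1(|x|)$ holds with $\rho(x) := \alpha_1(|x|)$, which is continuous and positive definite because $\alpha_1 \in \mathcal{K}_\infty$. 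All hypotheses of Theorem \ref{thm:RA Lyapunov stability} are then in place, yielding RAAS.

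The main obstacle is the monotonicity step, since the terminal-extension argument must be pushed through the nested worst-case expectation rather than a plain expectation; the critical leverage is translation equivariance paired with monotonicity of the coherent operator $\gamma$. The secondary technical points are a measurable selection of $\kappa_f(\cdot)$ on $\mathcal{X}_f$ consistent with condition (iii), and verifying that the optimal $(N-1)$-horizon trajectory actually terminates in $\mathcal{X}_f$; both are standard under the recursive feasibility assumed at the end of Section \ref{sec:problem formulation}. The RAPI property of $\mathcal{X}$ implicit in the RAAS definition then follows from the descent inequality, since sublevel sets of $V_N^*$ are forward invariant in the relevant risk-averse sense.
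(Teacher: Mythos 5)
Your proposal is correct and follows essentially the same route as the paper: use $V_N^{\ast}$ as the risk-averse Lyapunov function, establish the monotonicity $V_{i+1}^{\ast}\le V_{i}^{\ast}$ from condition (iii), derive the descent inequality via the DP recursion, and invoke Theorem \ref{thm:RA Lyapunov stability}. The only (immaterial) difference is that you prove monotonicity by directly appending the terminal controller to an $(N-1)$-horizon policy in the nested cost, while the paper runs an induction on the DP recursion using the inequality $\sup_{\theta}a-\sup_{\theta}b\le\sup_{\theta}(a-b)$; both rest on the same use of condition (iii) and the monotonicity of $\gamma$.
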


\begin{proof}
The proof can be found in Appendix \ref{appendix:RA MPC stability}.
\end{proof}

\noindent \textbf{Remark.} The stabilizing conditions in Theorem \ref{thm:RA MPC stability} can be implemented in a variety of ways. For example, in the \textit{Linear Quadratic Regulator} (LQR), the stage cost and terminal cost are often chosen as $l(x,u)=\frac{1}{2}(x^{\mathsf{T}}Qx+u^{\mathsf{T}}Ru)$ and $V_{f}(x)=\frac{1}{2}x^{\mathsf{T}}\Pi x$, where $Q,R$ are positive definite matrices, $\Pi$ is the solution to the steady-state Riccati equation (cf. \cite{Rawlings2017ModelDesign}).

\section{Bayesian Risk-averse Model Predictive Control}\label{sec:Bayesian risk-averse MPC}

In this section, we develop a practical Bayesian risk-averse MPC algorithm to capacitate computationally tractable implementation with theoretical guarantees of consistency and stability. 
The major challenges for practical implementation come from two aspects: one is the calculation of the Bayesian posterior update, and the other is the optimization of the multi-stage value function. 
We address the first challenge by utilizing sampling-based approaches to approximate the sequential Bayesian update in Subsection \ref{subsec:particle filter}. 
A time-varying ambiguity set is then constructed based on the estimated Bayesian posterior distribution, which will be described in detail in Subsection \ref{subsec:ambiguity set}.
To deal with the challenge of multi-stage stochastic optimization, we first parameterize candidate control policies in Subsection \ref{subsec:optimal policy}, and then derive an alternative sub-optimal control policy in Subsection \ref{subsec:sub-optimal policy}, which is computationally efficient but still enables the system to achieve asymptotic stability.

\subsection{Computational Bayesian Estimator}\label{subsec:particle filter}

When the parameter space $\Theta$ is continuous, or infinitely countable, it is often difficult to obtain an analytical form of the posterior distribution of $\theta$. Since the analytic solution is intractable, computational algorithms are necessary. In practice, we consider utilizing the well-known sequential sampling-based approaches, \textit{Particle Filtering} (PF)\cite{Doucet2010ALater}, to  approximate the sequential Bayesian update in \eqref{eqn:posterior distribution}.

\noindent \textbf{Remark.} In practice, the hyperparameters of particle filter-based approaches should be tailored to the specific stochastic models of interest. In this paper, we assume that an accurate approximation of the Bayesian update can be achieved through appropriate choices of hyperparameters. However, the sample complexity, specifically the number of particles required, is beyond the scope of this paper.

\subsection{Ambiguity Set}\label{subsec:ambiguity set}

In this paper, we consider utilizing \textit{credible interval} of the parameter distribution $\pi$ as the ambiguity set $\mathcal{A}$. 
In Bayesian statistics, a \textit{credible interval} is an interval within which an unobserved parameter value falls with a particular probability. It is an interval in the domain of a posterior probability distribution or a predictive distribution. The generalization to multivariate problems is the \textit{credible region}. Specifically, the \textit{Credible Interval} (CI) with a credible level $l\in [0,1]$, $\mathcal{C}^{l}$, is defined as a continuous subset such that the probability that the value of the random variable falls within that subset is $l$, i.e., $\mathrm{Pr}(\theta \in \mathcal{C}^{l}) = l.$

For a given posterior distribution $\pi$ and credible level $l$, $\mathcal{C}^{l}$ is not unique. There are typically two types of Bayesian CIs: (1) equal tail interval (ETI); and (2) highest posterior density interval (HPDI). The ETI chooses the interval where the probability of being below the interval is as likely as being above it: $\mathcal{C}^{l}=[q_{l},q_{u}]$ such that $\mathrm{Pr}(\theta < q_{l})=\mathrm{Pr}(\theta > q_{u})=(1-l)/2$. 
The HPDI chooses the narrowest set such that the posterior density for every point in this set is higher than the posterior density for any point outside of this set: $\mathrm{Pr}(\theta)\ge \mathrm{Pr}(\theta')$, for all $\theta \in \mathcal{C}^{l}, \theta' \notin \mathcal{C}^{l}$. For example, for an unimodal distribution, HPDI choose the values of highest probability density including the mode (the maximum a posteriori).

In this paper, since the posterior distribution of $\theta$ is updated sequentially, we also sequentially construct the time-varying ambiguity set $\mathcal{A}_{k}$ based on the estimated parameter distribution $\pi_{k}$. 
Let $\mathcal{C}^{l}_{k}$ be the CI for $\pi_{k}$ with a predefined credible level $l$.
If the Bayesian consistency conditions are satisfied, according to Theorem \ref{thm: consistency}, the posterior distribution of $\theta$ obtained by the Bayesian estimator converges to the degenerated distribution $\delta_{\theta^{\ast}}$ that concentrates on the true parameter value $\theta^{\ast}$, w.p.1. As a result, it is obvious to see that the ambiguity set $\mathcal{C}_{k}^{l}$ will also converge to a singleton that only contains $\theta^{\ast}$, i.e. $\mathcal{C}_{\infty}^{l}=\{\theta^{\ast}\}$, w.p.1. Then it is possible to find a subsequence of ambiguity sets $\mathcal{A}_{0:k}$ such that 
\begin{equation*}
    \begin{split}
        \mathcal{A}_{0} \supseteq \mathcal{A}_{1} \supseteq \cdots \supseteq \mathcal{A}_{k},\quad \text{and} \quad
        \lim_{k\rightarrow \infty}\mathcal{A}_{k} = \{\theta^{\ast}\}.
    \end{split}
\end{equation*}

Therefore, we assume that the credible intervals $\mathcal{C}^{l}_{k}$ contain $\theta^{\ast}$ within their interiors, which is often satisfiable in practice.
To facilitate the stability of the time-varying risk-averse MPC problem, we consider constructing a time-varying ambiguity set $\mathcal{A}_{k}$ which is forward shrinking. More specifically, let 
\begin{equation}\label{eqn:ambiguity set}
    \mathcal{A}_{k}=\left\{
    \begin{array}{ll}
    \mathcal{C}^{l}_{k} & \text{if}\quad \mathcal{C}^{l}_{k} \subseteq \mathcal{A}_{k-1} \quad \text{or}\quad k=0\\
    \mathcal{A}_{k-1} & \text{if}\quad \mathcal{C}^{l}_{k} \not\subseteq \mathcal{A}_{k-1}
    \end{array}
    \right. .
\end{equation}

Define the radius of the ambiguity set $\mathcal{A}_{k}$ as
\begin{equation*}
    \varepsilon_{k} = \frac{1}{2}\sup||\theta_{1}-\theta_{2}||_2,\quad \forall \theta_{1},\theta_{2} \in \mathcal{A}_{k}.
\end{equation*}
Then it is apparent that $\{\varepsilon_{k}\}$ is non-increasing and $\lim_{k \rightarrow \infty}\varepsilon_{k}=0$, w.p.1.

\subsection{Parameterized Optimal Control Policy}\label{subsec:optimal policy}

Different from the classical risk-averse MPC problem that is defined on fixed ambiguity set $\mathcal{A}$, the Bayesian risk-averse MPC problem is defined on the time-varying ambiguity set $\mathcal{A}_{k}$.
Therefore, the value function $V_{N}(\cdot)$ also becomes time-varying, and we actually need to solve a sequence of time-varying risk-averse MPC problems:
\begin{equation}
    V_{N}^{\ast}(x,k):=\inf_{\boldsymbol{\mu}} \sup_{\theta \in \mathcal{A}_{k}} V_{N}(x, \theta, \boldsymbol{\mu}).  
\end{equation}

The decision variable in DP, is a sequence $\boldsymbol{\mu}:=\{ \mu_{0}(\cdot), \mu_{1}(\cdot), \ldots, \mu_{N-1}(\cdot)\}$ of control laws, each of which is an arbitrary function of the state $x$, and thus is too complex for online optimization. Therefore, we consider a parameterized control policy $\boldsymbol{\mu}(\boldsymbol{v}):=\{ \mu(\cdot,v_{0}), \mu(\cdot,v_{1}), \ldots, \mu(\cdot,v_{N-1})\}$ which is parameterized by a sequence of parameters $\boldsymbol{v}=\{v_{0},v_{1},\ldots,v_{N-1}\}$.

With this parameterization, the time-varying risk-averse MPC problem becomes
\begin{equation}\label{eqn: parameterized formulation}
    V_{N}^{\ast}(x,k):=\inf_{\boldsymbol{v}} \sup_{\theta \in \mathcal{A}_{k}} V_{N}(x, \theta, \boldsymbol{\mu}(\boldsymbol{v})).   
\end{equation}

By solving the DP recursions with the parameterized control policy and the ambiguity set $\mathcal{A}_{k}$ at the current time $k$, 
\begin{equation}\label{eqn:k DP value}
    \begin{split}
        V_{i}^{\ast}(x,k)= &\inf_{v} l(x,\mu(x,v)) \\
        &+ \sup_{\theta \in \mathcal{A}_{k}} \mathbb{E}_{w} [V_{i-1}^{\ast}(f(x,\mu(x,v),w,\theta),k)],
    \end{split}
\end{equation}
\begin{equation}\label{eqn:k DP policy}
    \begin{split}
        \kappa_{i}(x,k)= &\mu (x, \arg \inf_{v} l(x,\mu(x,v)) \\
        &+ \sup_{\theta \in \mathcal{A}_{k}} \mathbb{E}_{w} [V_{i-1}^{\ast}(f(x,\mu(x,v),w,\theta),k)]),
    \end{split}
\end{equation}
we can recursively find the optimal value of the decision variable $\boldsymbol{v}$, denoted as $\boldsymbol{v}^{\ast}(k)$. Then the corresponding optimal control policy $\boldsymbol{\mu}^{\ast}(x,k):=\boldsymbol{\mu}(x, \boldsymbol{v}^{\ast}(k))$ and value function $V_{N}^{\ast}(x,k)$ can be obtained in the sequel.

Since MPC takes a receding horizon approach where only the first step of the control policy is employed, the implicit MPC law is given as $\mu^{\ast}_{0}(x,k)=\mu(x,v^{\ast}_{0}(k))$, with $v^{\ast}_{0}(k)$ the first element of the optimal decision variable sequence $\boldsymbol{v}^{\ast}(k)$.

Next, we state conditions on the stage cost $l$ and the terminal cost $V_{f}$ that ensure asymptotically stable for the Bayesian risk-averse MPC-controlled system, where the time-varying control policy $\kappa_{N}(\cdot,k)$ is obtained by \eqref{eqn:k DP policy}.

\begin{theorem}[Bayesian Risk-averse MPC Stability Conditions]\label{thm:Bayesian RA MPC stability}
Suppose that the Bayesian consistency conditions are satisfied, and
\begin{itemize}
    \item[(i)] There exists a $\mathcal{K}_{\infty}$ functions $\alpha_{1}(\cdot)$ such that $l(x,u) \ge \alpha_{1}(|x|)$, $\forall x \in \mathcal{X}$,
    \item[(ii)] There exists a $\mathcal{K}_{\infty}$ function $\alpha_{2}(\cdot)$ such that $V_{N}^{\ast}(x,k) \le \alpha_{2}(|x|),\forall k \in \mathbb{I}_{\ge 0}$, $\forall x \in \mathcal{X}$,
    \item[(iii)] For all $x \in \mathcal{X}_{f}$, there exists a $u$ such that $\mathbb{E}_{w}[f(x,u,w,\theta^{\ast})] \in \mathcal{X}_{f}$, and $\mathbb{E}_{w}[V_{f}(f(x,u,w,\theta^{\ast}))]-V_{f}(x) \le -l(x,u)$,
\end{itemize}
then the origin is asymptotically stable in $\mathcal{X}$ for the Bayesian risk-averse MPC-controlled system $x_{k+1}=f(x_{k},\kappa_{N}(x_{k},k),w_{k},\theta)$, w.p.1.
\end{theorem}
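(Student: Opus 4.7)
The plan is to use $V_N^{\ast}(\cdot,k)$ as a time-varying Lyapunov function candidate for the closed-loop system driven by the true parameter $\theta^{\ast}$, and to adapt the risk-averse Lyapunov argument of Theorem \ref{thm:RA Lyapunov stability} using the Bayesian consistency of the shrinking ambiguity sets $\{\mathcal{A}_k\}$. First, I would verify the Lyapunov bounds: unrolling the DP recursion \eqref{eqn:k DP value} and using assumption (i) gives $V_N^{\ast}(x,k)\ge l(x,\kappa_N(x,k))\ge \alpha_1(|x|)$, while assumption (ii) supplies the upper bound $V_N^{\ast}(x,k)\le \alpha_2(|x|)$. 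Next, the construction \eqref{eqn:ambiguity set} enforces $\mathcal{A}_{k+1}\subseteq \mathcal{A}_k$, so a straightforward backward induction on the horizon in \eqref{eqn:k DP value} shows the monotonicity $V_i^{\ast}(x,k+1)\le V_i^{\ast}(x,k)$ for every $i$ and $x$. Because the credible-interval construction keeps $\theta^{\ast}\in\mathcal{A}_k$ for all $k$, the inner supremum in \eqref{eqn:k DP value} dominates the expectation at $\theta^{\ast}$, which yields
\begin{equation*}
V_N^{\ast}(x,k)\;\ge\; l(x,\kappa_N(x,k)) + \mathbb{E}_w\!\left[V_{N-1}^{\ast}\!\left(f(x,\kappa_N(x,k),w,\theta^{\ast}),k\right)\right].
\end{equation*}

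The descent at time $k\to k+1$ would be obtained through a warm-start construction: starting from the optimal DP policy $\{\kappa_N(\cdot,k),\kappa_{N-1}(\cdot,k),\dots,\kappa_1(\cdot,k)\}$ at time $k$, I would shift it by one step and append the terminal controller $\kappa_f(\cdot)$ provided by assumption (iii), obtaining the candidate policy $\{\kappa_{N-1}(\cdot,k),\dots,\kappa_1(\cdot,k),\kappa_f(\cdot)\}$ for the MPC problem at time $k+1$. Recursive feasibility (assumed in the paper) together with $\mathcal{A}_{k+1}\subseteq \mathcal{A}_k$ makes this policy feasible for the problem at time $k+1$. Bounding $V_N^{\ast}(x^{+},k+1)$ by the cost of this warm-start and telescoping through the DP stages, the first $N-1$ stages collapse to $V_{N-1}^{\ast}(x^{+},k)$ in the monotone sense, while the extra terminal stage contributes $\mathbb{E}_w[V_f(f(\cdot,\kappa_f(\cdot),w,\theta))]-V_f(\cdot)+l(\cdot,\kappa_f(\cdot))$; at $\theta=\theta^{\ast}$ this quantity is nonpositive by assumption (iii).

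The principal obstacle is that assumption (iii) is stated only at $\theta^{\ast}$, whereas the warm-start must be evaluated under $\sup_{\theta\in\mathcal{A}_{k+1}}\mathbb{E}_w$. I would absorb this gap into a residual term $r_k$ and bound it by a modulus of continuity of $f$ and $V_f$ in $\theta$, times the ambiguity-set radius $\varepsilon_{k+1}$. Assembling the pieces gives
\begin{equation*}
\mathbb{E}_w\!\left[V_N^{\ast}(x^{+},k+1)\right] - V_N^{\ast}(x,k) \;\le\; -\alpha_1(|x|) + r_k,
\end{equation*}
with $r_k$ vanishing as $\varepsilon_k\downarrow 0$. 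Bayesian consistency (Theorem \ref{thm: consistency}) and the forward-shrinking property of $\mathcal{A}_k$ give $\varepsilon_k\to 0$ and hence $r_k\to 0$ w.p.1. Consequently, outside any neighborhood of the origin, the descent strictly dominates $r_k$ for all sufficiently large $k$, producing an (eventual) supermartingale-type inequality on $V_N^{\ast}(x_k,k)$. Invoking a standard stochastic Lyapunov/comparison argument (or, equivalently, passing to the limit where the ambiguity set collapses to $\{\theta^{\ast}\}$ and applying the deterministic-parameter specialization of Theorem \ref{thm:RA MPC stability}) then yields asymptotic stability of the origin w.p.1, completing the proof. The hard part is making the residual bound $r_k$ rigorous, which will require the continuity/boundedness properties already implied by Assumption 3.1 and the finiteness of the stage and terminal costs on compact sublevel sets of $V_N^{\ast}$.
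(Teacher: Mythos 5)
Your plan matches the paper's proof in all essentials: both use $V_N^{\ast}(\cdot,k)$ as a time-varying Lyapunov candidate, obtain the lower/upper bounds from (i)--(ii), exploit $\mathcal{A}_{k+1}\subseteq\mathcal{A}_k$ for monotonicity in $k$, and establish a descent inequality up to a residual term (your $r_k$ is exactly the paper's $\delta_k = \sup_{\theta\in\mathcal{A}_k}\{\mathbb{E}_w[V_f(f(x,u,w,\theta))]-V_f(x)+l(x,u)\}$) that vanishes as the ambiguity set collapses to $\{\theta^{\ast}\}$ by Bayesian consistency, before invoking a stochastic Lyapunov theorem for sufficiently large $k$. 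The only presentational difference is that you derive the descent via the shifted warm-start policy while the paper packages the same idea as a ``relaxed monotonicity'' lemma $V_{i+1}^{\ast}(x,k)\le V_i^{\ast}(x,k)+\delta_k$; the continuity-in-$\theta$ issue you flag as the hard part is real but is glossed over in the paper as well, which simply asserts $\delta_k\to\delta_\infty\le 0$.
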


\begin{proof}
The proof can be found in Appendix \ref{appendix:Bayesian RA MPC stability}.
\end{proof}

\noindent \textbf{Remark.} Note that the Bayesian risk-averse MPC stability condition (Theorem \ref{thm:Bayesian RA MPC stability}) is more relaxed than the general risk-averse MPC stability condition (Theorem \ref{thm:RA MPC stability}), since the descent property of $V_{f}(\cdot)$ (condition (iii)) is now only imposed on the true system parameter $\theta^{\ast}$ rather than all the candidate parameters in the ambiguity set $\theta \in \mathcal{A}$.  This is due to the consistency guarantee of Bayesian learning (Theorem \ref{thm: consistency}). As long as the true system is identifiable and can be stabilized, the proposed algorithm is guaranteed to achieve stability eventually, w.p.1.

\subsection{Sub-optimal Control Policy}\label{subsec:sub-optimal policy}

When the radius of the ambiguity set $\mathcal{A}_{k}$ is relatively large, it is computationally intractable to find the optimal control policy even with the parameterized control policy. Actually, the optimal control policy does not necessarily demonstrate evident benefit over some sub-optimal control policies if there is significant uncertainty when making the control decision. Therefore, we consider an alternative sub-optimal control policy that requires little online optimization, but still ensures asymptotic stability of the time-varying closed-loop system.

Sub-optimal MPC considers the evolution of an extended state consisting of the system state and a warm-start of control policy. Given a feasible warm start, optimization algorithms can produce an improved feasible control policy, or even simply return the warm start. The first input is then injected and a new warm start can be generated from the returned control policy and a feasible terminal control law.

Therefore, the procedure of sub-optimal MPC is to repeat the following steps:

\noindent (i) Suppose that, at time $k$ and state $x_{k}$, a feasible parameterized control policy with a sequence of parameters $\boldsymbol{v}(k)$ has been determined, and so has the control action $\mu(x_{k},v_{0}(k))$. Then at the next time $k+1$, the subsequent state is $x_{k+1}=f(x_{k},\mu(x_{k},v_{0}(k)),w_{k},\theta^{\ast})$, with $w_{k}$ the value of the noise disturbance at time $k$. Consider a new control policy
\begin{equation}\label{eqn:warm start}
    \tilde{\boldsymbol{v}}(k+1):=\{\boldsymbol{v}_{1:N-1}(k), v_{f}\},
\end{equation}
where $v_{f}$ can be any feasible parameters such that the parameterized control policy $\mu(\cdot, v_{f})$ satisfies condition (iii) in Theorem \ref{thm:Bayesian RA MPC stability}.

Then $\tilde{\boldsymbol{v}}(k+1)$ is a feasible control policy for the subsequent state $x_{k+1}$, and thus can be utilized as a feasible warm-start of control policy at time $k+1$.

\noindent (ii) Next, a few optimization steps (e.g., value iteration, policy gradient, or evolutionary algorithms) can be executed to compute a better control sequence $\boldsymbol{v}(k+1)$ such that
\begin{equation}\label{eqn:sub-optimal policy}
    V_{N}(x_{k+1},\boldsymbol{\mu}(\boldsymbol{v}(k+1)),k+1) \le V_{N}(x_{k+1},\boldsymbol{\mu}(\tilde{\boldsymbol{v}}(k+1)),k+1).
\end{equation}

The sub-optimal control policy is thus given as $\mu_{0}(x_{k+1},k+1)=\mu(x_{k+1},v_{0}(k+1))$, with $v_{0}(k+1)$ the first element of the sub-optimal decision variable sequence $\boldsymbol{v}(k+1)$.
The sub-optimal control policy $\mu(x_{k},v_{0}(k))$ can also guarantee the asymptotic stability of the time-varying closed-loop system, if the Bayesian risk-averse MPC stability conditions in Theorem \ref{thm:Bayesian RA MPC stability} are satisfied.

\begin{theorem}[Stability of Sub-optimal MPC]\label{thm:sub-optimal stability}
If the Bayesian risk-averse MPC stability conditions (i-iii) in Theorem \ref{thm:Bayesian RA MPC stability} are satisfied, and 
\begin{itemize}
    \item[(iv)] There exists a $\mathcal{K}_{\infty}$ functions $\alpha_{l}(\cdot)$ satisfying $l(x,u) \ge \alpha_{l}(|(x,u)|)$, $\forall x \in \mathcal{X}, u \in \mathcal{U}$,
\end{itemize}
then the origin is asymptotically stable in $\mathcal{X}$ for the sub-optimal MPC-controlled system $x_{k+1}=f(x_{k},\mu(x_{k},v_{0}(k)),w_{k},\theta)$, w.p.1.
\end{theorem}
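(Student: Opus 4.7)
The plan is to adapt the extended-state Lyapunov argument for sub-optimal MPC (cf. Rawlings, Mayne, and Diehl, 2017, Chapter 2.8) to the time-varying Bayesian risk-averse setting, building directly on the template used for Theorem \ref{thm:Bayesian RA MPC stability}. The principal technical difficulty is that, unlike in the optimal case, the sequence $\boldsymbol{v}(k)$ is not pinned down by $x_k$ alone; it is carried along as a memory variable through the warm-start (\ref{eqn:warm start}). Hence the natural Lyapunov candidate lives on the extended state $z_k := (x_k, \boldsymbol{v}(k))$, and condition (iv) is what lets us recover a class-$\mathcal{K}_\infty$ lower bound in terms of $|z_k|$ rather than only $|x_k|$.

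First I would define $W(z_k, k) := V_N(x_k, \boldsymbol{\mu}(\boldsymbol{v}(k)), k)$ and establish the sandwich bounds. For the lower bound, the recursion gives $W(z_k,k) \ge l(x_k,\mu(x_k,v_0(k)))$; combined with (i) and (iv) and the continuity of $\mu$ at the origin, this yields $W(z_k,k) \ge \underline{\alpha}(|z_k|)$ for some $\underline{\alpha}\in\mathcal{K}_\infty$ after propagating the stage-cost lower bound down the horizon. For the upper bound, iterating the warm-start construction (\ref{eqn:warm start}) together with the terminal-cost descent of condition (iii) shows that starting from a feasible warm-start with $|z_k|$ small forces every stage cost to be small, delivering $W(z_k,k) \le \bar{\alpha}_2(|z_k|)$ for some $\bar{\alpha}_2\in\mathcal{K}_\infty$.

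Next I would establish the descent property. By (\ref{eqn:sub-optimal policy}) applied at time $k+1$,
\begin{equation*}
    W(z_{k+1},k+1) \le V_N\bigl(x_{k+1}, \boldsymbol{\mu}(\tilde{\boldsymbol{v}}(k+1)), k+1\bigr).
\end{equation*}
Expanding the right-hand side recursively using the structure of $\tilde{\boldsymbol{v}}(k+1)=\{\boldsymbol{v}_{1:N-1}(k), v_f\}$, applying condition (iii) to telescope the terminal cost, and using the forward-shrinking property $\mathcal{A}_{k+1}\subseteq\mathcal{A}_k$ (so the sup over $\mathcal{A}_{k+1}$ is bounded by the sup over $\mathcal{A}_k$), I would match the remaining terms to $V_N(x_k,\boldsymbol{\mu}(\boldsymbol{v}(k)),k) - l(x_k,u_k)$ up to a residual $\epsilon_k$ that captures the mismatch between the realized next state $x_{k+1}=f(x_k,u_k,w_k,\theta^\ast)$ and the nested $\sup_{\theta\in\mathcal{A}_k}\mathbb{E}_{w}$ operator in the value-function recursion. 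By Theorem \ref{thm: consistency}, $\mathcal{A}_k\to\{\theta^\ast\}$ w.p.1, so $\epsilon_k\to 0$ w.p.1, exactly as in the proof of Theorem \ref{thm:Bayesian RA MPC stability}. Combining the bounds then gives $W(z_{k+1},k+1)-W(z_k,k)\le -\alpha_1(|x_k|)+\epsilon_k$.

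Finally, the two class-$\mathcal{K}_\infty$ bounds on $W$ together with this descent property place us in the setting of Theorem \ref{thm:RA Lyapunov stability} (applied in its time-varying form with vanishing residual), so $W(z_k,k)\to 0$ and hence $|z_k|\to 0$ and in particular $|x_k|\to 0$ w.p.1. The main obstacle, as above, is Step \textbf{4}: correctly absorbing the realized-noise vs.\ nested-risk mismatch and handling the asynchrony between the warm-start (which was optimized over $\mathcal{A}_k$) and the current ambiguity set $\mathcal{A}_{k+1}$. The shrinkage $\mathcal{A}_{k+1}\subseteq\mathcal{A}_k$ keeps $\tilde{\boldsymbol{v}}(k+1)$ feasible and makes the inequality go the right way, while Bayesian consistency drives the residual to zero, together closing the loop between learning and sub-optimal control.
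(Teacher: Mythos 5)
Your overall architecture matches the paper's: an extended state carrying the warm-start, a Lyapunov candidate built from the sub-optimal value function, descent obtained from the warm-start construction and the improvement inequality \eqref{eqn:sub-optimal policy} together with the forward-shrinking ambiguity sets, and Bayesian consistency driving the residual (the paper's $\delta_k$, your $\epsilon_k$) to zero. Condition (iv) is also invoked for the same purpose you identify, namely to obtain a $\mathcal{K}_{\infty}$ lower bound on the value function in terms of the \emph{extended} state.

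There is, however, a genuine gap in your descent step. You arrive at $W(z_{k+1},k+1)-W(z_k,k)\le -\alpha_{1}(|x_k|)+\epsilon_k$, a decrease that is positive definite only in the state component $x_k$ of $z_k$. No Lyapunov theorem --- neither Theorem \ref{thm:RA Lyapunov stability} nor the difference-inclusion version the paper uses (Theorem \ref{thm:Lyapunov Stability (Difference Inclusion)}) --- applies with such a decrease: $z_k$ could have $|x_k|$ arbitrarily small while the warm-start component stays bounded away from zero, in which case your inequality guarantees essentially no decrease of $W$, and the conclusion ``$W(z_k,k)\to 0$ hence $|z_k|\to 0$'' does not follow. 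The missing ingredient is the linking property (Proposition \ref{prop: link pair} in the paper, imported from Allan et al.): for feasible pairs, $|\boldsymbol{\mu}(\tilde{\boldsymbol{v}})|\le \alpha_{r}(|x|)$ with $\alpha_{r}\in\mathcal{K}_{\infty}$, hence $|z|\le \alpha_{r'}(|(x,\mu(x,v_{0}))|)$. Combined with condition (iv) --- not condition (i) --- this converts the stage-cost decrease $-l(x,\mu(x,v_{0}))\le -\alpha_{l}(|(x,\mu(x,v_{0}))|)$ into $-\alpha_{3}(|z|)$ with $\alpha_{3}:=\alpha_{l}\circ\alpha_{r'}^{-1}\in\mathcal{K}_{\infty}$, which is what the difference-inclusion Lyapunov theorem needs. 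The same linking property is needed a second time at the end, to turn the resulting $\mathcal{KL}$ bound on $|z_i|$ into a bound $\tilde{\beta}(|x|,i)$ on $|x_i|$ that depends only on the initial \emph{state}, as the definition of asymptotic stability of the origin in $\mathcal{X}$ requires. Without this proposition your argument yields at most convergence of $|x_k|$ along a subsequence, not the claimed stability.
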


\begin{proof}
The proof can be found in Appendix \ref{appendix:sub-optimal stability}.
\end{proof}

\noindent \textbf{Remark 1.} Compared with Theorem \ref{thm:Bayesian RA MPC stability}, by imposing a mild additional condition, the requirement of optimal control policy is now unnecessary. The system can still be stabilized with a computationally efficient sub-optimal control policy. Actually, the additional requirement (condition (iv)) may be naturally satisfied if conditions (i-iii) in Theorem \ref{thm:Bayesian RA MPC stability} are satisfied. For example, in LQR, the choice of stage cost $l(x,u)=\frac{1}{2}(x^{\mathsf{T}}Qx+u^{\mathsf{T}}Ru)$ naturally satisfies condition (iv). 

\noindent \textbf{Remark 2.} 
Note that in this paper, the stability conditions are derived under the assumption that the Bayesian consistency conditions can be satisfied during the system trajectory. 
However, the control policy itself can influence the system trajectory, causing the consistency and stability conditions to become intertwined. 
Designing a controller that ensures both consistency and stability requires explicitly addressing the trade-off between exploration and exploitation (cf. \cite{Curi2020EfficientPlanning}). 
One promising direction for future research is to combine Theorem~\ref{thm:sub-optimal stability} and Theorem~\ref{thm: consistency} in an active inference \cite{Smith2022AData} setting to tackle this dual learning-control problem.

Now we are ready to introduce a practical Bayesian risk-averse MPC algorithm, which is presented in Algorithm~\ref{alg:BRAC}.

\begin{algorithm}[ht]
\SetAlgoLined
\caption{Bayesian Risk-Averse MPC}\label{alg:BRAC}
Initialize prior $\pi_{0}(\theta)$ with uniform probability distribution over the parameter space $\Theta$\\
Sample $N_{s}$ equally-weighted particles $\{\frac{1}{N_{s}},\theta_{0,i}^{+}\}$ from the prior $\pi_{0}$\\
Initialize state $x_{0}$, input $u_{0}=0$,  iterator $k=1$\\
\While{Not converge}{
Take an observation of $x_{k}$\\
Generate sample $e_{k-1,i}$ from a distribution of the small random noise $e_{k-1}$ for each particle $i$, then propagate particles $\theta_{k,i}^{-}=\theta_{k-1,i}^{+} + e_{k-1,i}$\\
Compute the weights $m_{k,i}=\frac{q(x_{k};\theta_{k,i}^{-},x_{k-1},u_{k-1})}{\sum_{j=1}^{N_{s}}q(x_{k};\theta_{k,j}^{-},x_{k-1},u_{k-1})}, i = 1, ... N_s$\\
Resample $\{m_{k,i},\theta_{k,i}^{-}\}$ to obtain $N_{s}$ new equally-weighted particles $\{\frac{1}{N_{s}},\theta_{k,i}^{+}\}$\\
Construct the ambiguity set $\mathcal{A}_{k}$ from the empirical distribution (particles) according to (\ref{eqn:ambiguity set})\\
Obtain a feasible warm start based on the control policy of the previous time according to (\ref{eqn:warm start})\\
Take several optimization steps to calculate a better control policy that satisfies (\ref{eqn:sub-optimal policy})\\
Execute the control action $u_{k}=\mu(x_{k},v_{0}(x_{k},k))$\\
$k := k + 1$\\
}
\end{algorithm}

\section{Simulation}\label{sec:simulation}

To evaluate the proposed Bayesian risk-averse MPC framework, we conduct simulations on a nonlinear cart-pole system with partially unknown dynamics. The objective is to swing up and stabilize the pole in the upright position while minimizing control effort and enforcing physical constraints.

\noindent \textbf{System Dynamics.}
The cart-pole system is described by the following continuous-time dynamics:
\begin{align*}
\dot{p} &= \dot{p}, \nonumber \\
\ddot{p} &= \frac{u + m \sin q (\ell \dot{q}^2 + g \cos q)}{M + m \sin^2 q}, \nonumber \\
\dot{q} &= \dot{q}, \nonumber \\
\ddot{q} &= \frac{-u \cos q - m \ell \dot{q}^2 \cos q \sin q - (M + m) g \sin q}{\ell (M + m \sin^2 q)},
\end{align*}
where $p$ and $q$ denote the cart position and the pole angle, respectively; $u$ is the control input; $M = 1.0$ kg is the known cart mass; $m$ and $\ell$ are the \emph{unknown} pole mass and pole length, respectively; and $g = 9.81$ m/s$^2$ is gravity. The dynamics are discretized using a fourth-order Runge--Kutta method with a time step of $\Delta t = 0.05$ s.

\noindent \textbf{Parameter Estimation.}
We treat the pole mass and length $\theta=[m,l]^{\intercal}$ as a latent variable and estimate it online using a particle filter with 1000 particles. The full state is assumed measurable with no observation noise, while Gaussian process noise with standard deviation $\sigma_w = 0.01$ is added to simulate unmodeled disturbances.
At each timestep, particles are propagated using the system dynamics and weighted based on their consistency with observed states. 

\noindent \textbf{Baselines.}
We compare our proposed risk-averse strategy (with credible level 0.9) against three classic MPC baselines:
\begin{itemize}
    \item Nominal MPC: Plans using the posterior mean.
    \item Tube MPC: Plans against the worst-case within a pre-defined tube of the unknown parameters.
    \item Stochastic MPC: Plans with a risk-neutral perspective using the posterior distribution.
\end{itemize}

\noindent \textbf{Implementations.}
Each controller uses a 5-step planning horizon in a receding horizon fashion, with only the first control input applied at each step. Optimization is solved using L-BFGS-B via \texttt{scipy.optimize.minimize}, warm-started from the previous control sequence.

Control input is bounded by: $u \in [-10, 10]$ N. The initial state for all runs is $[p, \dot{p}, q, \dot{q}] = [0, 0, 0, 0]$ (hanging down).

\noindent \textbf{Evaluation Metrics.}
Each controller is evaluated over 50 Monte Carlo simulations with different noise realizations and particle samples. We report:
\begin{itemize}
    \item Total Cost: Cumulative cost during 100 time steps. The stage cost is chosen as $c(x,u)=(x-x^{*})^{\intercal}Q(x-x^{*})+u^{\intercal}Ru$, where $Q=\diag(1, 0.1, 10, 0.1)$, $R=0.01$, and $x^{*}=[0, 0, \pi, 0]^{\intercal}$.
    \item Tracking Error: Mean-squared deviation of $q$ from $\pi$.
    \item Parameter Estimation Error: Deviation of estimated $\theta$ from true $\theta$.
    \item Angle Constraint Violations: Count on violations of angle constraint ($|q|\le \pi$) during 100 time steps. 
\end{itemize}

\noindent \textbf{Results and Discussions.}
Figure~\ref{fig:metrics} presents a comparative evaluation of Nominal MPC, Tube MPC, Stochastic MPC, and the proposed Risk-averse MPC over 50 Monte Carlo trials on the cart-pole system with an unknown pole mass and length. The proposed Risk-averse MPC consistently outperforms the baselines across multiple dimensions. It achieves the lowest total cost and tracking error, indicating efficient and accurate control under epistemic uncertainty. In contrast, Tube MPC exhibits the highest cost and worst tracking, attributed to its conservative design that prioritizes robustness at the expense of performance. While Nominal and Stochastic MPC perform comparably in terms of cost and tracking, their higher variance suggests inconsistent behavior across trials. Regarding parameter estimation, Tube MPC results in significantly larger estimation errors due to its overly conservative trajectories that avoid informative exploration. The other approaches—including Risk-averse MPC—enable accurate parameter learning. In terms of constraint satisfaction, Tube MPC yields the fewest angle constraint violations, but this comes at a high performance cost. Risk-averse MPC achieves a favorable balance, reducing constraint violations compared to Nominal and Stochastic MPC while maintaining superior control performance. Overall, these results demonstrate that Risk-averse MPC provides an effective compromise between safety, learning, and control efficiency in uncertain dynamical systems.

\begin{figure*}[t]
    \centering
    \includegraphics[width=0.95\linewidth]{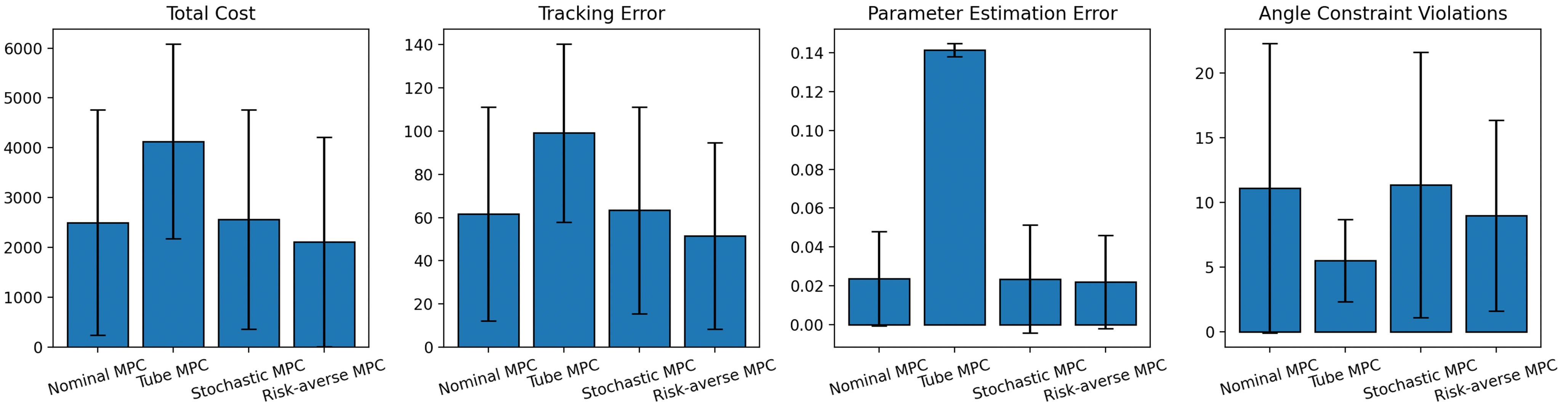}
    \caption{Performance comparison across Nominal, Tube, Stochastic, and the proposed Risk-verse MPC over 50 Monte Carlo trials on the cart-pole system with an unknown pole mass and length. Metrics include total cost, tracking error, estimation accuracy, angle constraint violations. Error bars denote one standard deviation. Our method consistently balances performance, robustness, and learning efficiency under epistemic uncertainty.}
    \label{fig:metrics}
\end{figure*}

\section{Conclusion and Future Work}\label{sec:conclusion}

In this paper, an online Bayesian learning-based risk-averse MPC framework was proposed to deal with the uncertainty that exists in online data-driven MPC from a risk-averse perspective. 
A practical Bayesian risk-averse MPC algorithm has been developed to capacitate computationally tractable implementation with theoretical guarantees of consistency and stability.
Future work includes formally incorporating constraints into the problem and theoretically analyzing the recursive feasibility of the proposed framework.
Another interesting and promising direction to explore is the dual learning-control problem and developing a principled approach to design a controller that ensures both consistency and stability.
One limitation of the current approach lies in its scalability to high-dimensional state and parameter spaces due to the curse of dimensionality. Addressing this challenge through techniques such as dimension reduction or scalable variational approximations remains an important direction for future work.


%

\appendices
\section{Proof of Theorem \ref{thm: consistency}(Bayesian Consistency Conditions)}\label{appendix:consistancy}

To prove the consistency of the Bayesian estimator, we first prove Lemma~\ref{lemma:likelihood convergence} based on condition (i).

\begin{lemma}\label{lemma:likelihood convergence}
The marginal transition kernel $\hat{q}(x_{k};x_{k-1},u_{k-1})
=\sum_{\theta}\pi_{k-1}(\theta)q(x_{k};\theta,x_{k-1},u_{k-1})$ converges to the true transition kernel $q(x_{k};\theta^{\ast},x_{k-1},u_{k-1})$, i.e., $\lim_{k\rightarrow \infty} \hat{q}(x_{k};x_{k-1},u_{k-1})=q(x_{k};\theta^{\ast},x_{k-1},u_{k-1})$, with probability 1.
\end{lemma}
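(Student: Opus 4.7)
The plan is to exploit the telescoping structure produced by the Bayes update~\eqref{eqn:posterior distribution}. Evaluating~\eqref{eqn:posterior distribution} at $\theta=\theta^{\ast}$ and rearranging gives
\begin{equation*}
    \frac{\hat{q}(x_{k};x_{k-1},u_{k-1})}{q(x_{k};\theta^{\ast},x_{k-1},u_{k-1})} \;=\; \frac{\pi_{k-1}(\theta^{\ast})}{\pi_{k}(\theta^{\ast})},
\end{equation*}
so that, defining $M_{k}:=\prod_{i=1}^{k}\hat{q}(x_{i};x_{i-1},u_{i-1})/q(x_{i};\theta^{\ast},x_{i-1},u_{i-1})$, the finite product telescopes to $M_{k}=\pi_{0}(\theta^{\ast})/\pi_{k}(\theta^{\ast})$. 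I would then show that $\{M_{k}\}$ is a non-negative martingale under the true law $\mathbb{P}_{\theta^{\ast}}$, that it is uniformly bounded away from zero, and read off the desired convergence from the martingale convergence theorem.

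First, to verify the martingale property, observe that under $\mathbb{P}_{\theta^{\ast}}$ the conditional density of $x_{k}$ given $\mathcal{F}_{k-1}:=\sigma(x_{0:k-1},u_{0:k-1})$ is $q(\cdot;\theta^{\ast},x_{k-1},u_{k-1})$; a one-line calculation then yields
\begin{equation*}
\mathbb{E}_{\theta^{\ast}}\!\left[\tfrac{M_{k}}{M_{k-1}}\,\Big|\,\mathcal{F}_{k-1}\right] = \int \hat{q}(x_{k};x_{k-1},u_{k-1})\,dx_{k} = 1,
\end{equation*}
so $M_{k}$ is a mean-one non-negative martingale. Next, condition~(i) of Theorem~\ref{thm: consistency} guarantees $\pi_{0}(\theta^{\ast})>0$; combined with the strict positivity of $q$ (the standing assumption on the transition kernel) this forces $\pi_{k}(\theta^{\ast})>0$ almost surely by induction on~\eqref{eqn:posterior distribution}, and together with the trivial bound $\pi_{k}(\theta^{\ast})\le 1$ it produces the uniform lower bound $M_{k}\ge \pi_{0}(\theta^{\ast})>0$. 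Doob's martingale convergence theorem applied to this $L^{1}$-bounded non-negative martingale then delivers $M_{k}\to M_{\infty}$ a.s.\ with $M_{\infty}\ge\pi_{0}(\theta^{\ast})>0$; since the limit is strictly positive, the multiplicative increments $M_{k}/M_{k-1}$ must tend to~$1$ w.p.1, which is precisely the claim of the lemma.

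The step I expect to require the most care is establishing the strict positivity $M_{\infty}>0$: without it, Doob's theorem only gives $M_{k}\to M_{\infty}\in[0,\infty)$, and a null limit would block the ``ratio-to-one'' conclusion. Condition~(i) of Theorem~\ref{thm: consistency}, via the uniform bound $M_{k}\ge\pi_{0}(\theta^{\ast})$, is precisely what prevents this degeneration and is the only place the non-vanishing prior at $\theta^{\ast}$ enters essentially. A minor subtlety worth flagging, but not belaboring, is that the conclusion is pointwise along the realized trajectory $\{x_{k}\}$ rather than a uniform functional convergence in~$x_{k}$; this is in fact the form actually required when the lemma is invoked in the subsequent proof of Theorem~\ref{thm: consistency}.
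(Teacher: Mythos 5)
Your martingale route is genuinely different from the paper's argument: the paper works with the conditional Kullback--Leibler divergence $d_{k-1}$ between the full conditional densities $q(\cdot;\theta^{\ast},x_{k-1},u_{k-1})$ and $\hat{q}(\cdot;x_{k-1},u_{k-1})$, shows $\sum_k \mathbb{E}[d_k]<\infty$ from the telescoping of $\log\pi_k(\theta^{\ast})$, and then passes through Markov/Borel--Cantelli and Pinsker's inequality to obtain convergence of the kernels in total variation and hence pointwise in $x_k$ \emph{for every} value of the argument. Your observation that $M_k=\pi_{0}(\theta^{\ast})/\pi_{k}(\theta^{\ast})$ is a mean-one nonnegative $\mathbb{P}_{\theta^{\ast}}$-martingale bounded below by $\pi_{0}(\theta^{\ast})$ is correct (this is essentially Doob's classical consistency device), and the deduction $M_{k}/M_{k-1}\to 1$ a.s.\ is sound. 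As a bonus it yields $\pi_{k}(\theta^{\ast})\to\pi_{0}(\theta^{\ast})/M_{\infty}>0$, which the paper only extracts later via subsequences.

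The gap is in your final paragraph: the conclusion you reach is strictly weaker than the lemma as the paper uses it, and your claim that the trajectory-pointwise form ``is in fact the form actually required'' downstream is not right. Your argument controls the likelihood ratio only at the \emph{realized} observation $x_k$, i.e.\ $\hat{q}(x_{k};x_{k-1},u_{k-1})/q(x_{k};\theta^{\ast},x_{k-1},u_{k-1})\to 1$ along the sample path. But in the proof of Theorem~\ref{thm: consistency} the limit identity \eqref{eqn:strongly distinguishable} must hold as an identity of \emph{functions} (equivalently, measures) in the first argument, because observational distinguishability is defined through the full conditional laws $\mathrm{Pr}(\cdot\mid\theta,\eta_{m})$; only then does strong distinguishability force all the coefficients $\pi_{\infty}(\theta)$, $\theta\neq\theta^{\ast}$, to vanish. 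Passing from convergence at the (random, non-convergent) realized points $x_k$ to the functional statement \eqref{eqn:measurement convergence} requires an additional argument — e.g.\ converting the realized-ratio convergence into convergence of the conditional KL or total-variation distance, which is exactly the step your shortcut bypasses and the paper supplies via $\mathbb{E}[\,\cdot\mid\mathcal{F}_{k-1}]$ and Pinsker. Without that step the martingale argument does not prove the lemma in the form in which it is invoked.
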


\begin{proof}
Let $\mathcal{F}_k = \sigma\left\{(x_{s},u_{s}), s\leq k\right\}$ be the $\sigma-$filtration generated by the past observed states and controls.
According to \eqref{eqn:posterior distribution}, the estimated probability of the true parameter satisfies the following equation,
\begin{equation*}
    \log \pi_{k}(\theta^{\ast})=\log \pi_{k-1}(\theta^{\ast}) + \log \frac{q(x_{k};\theta^{\ast},x_{k-1},u_{k-1})}{\hat{q}(x_{k};x_{k-1},u_{k-1})}.
\end{equation*}

Taking expectation on both sides, we have that
\begin{align*}
    &\mathbb{E}[ \log \pi_{k}(\theta^{\ast})]  \\=& \mathbb{E}[\log \pi_{k-1}(\theta^{\ast})] + \mathbb{E}\left[ \log \frac{q(x_{k};\theta^{\ast},x_{k-1},u_{k-1})}{\hat{q}(x_{k};x_{k-1},u_{k-1})}\right]\\
    =& \mathbb{E}[\log \pi_{k-1}(\theta^{\ast})] \\
    & ~~+ \mathbb{E}\left[\mathbb{E}\left[ \log \frac{q(x_{k};\theta^{\ast},x_{k-1},u_{k-1})}{\hat{q}(x_{k};x_{k-1},u_{k-1})}|\mathcal{F}_{k-1}\right]\right]\\
    =& \mathbb{E}[\log \pi_{k-1}(\theta^{\ast})] +\mathbb{E}[d_{k-1}]
\end{align*}
where {\small $d_{k-1}=DL(q(x_{k};\theta^{\ast},x_{k-1},u_{k-1})||\hat{q}(x_{k};x_{k-1},u_{k-1}))$} is the \textit{relative entropy} (\textit{Kullback–Leibler divergence}) \cite{Klenke2014ProbabilityCourse} between $q(x_{k};\theta^{\ast},x_{k-1},u_{k-1})$ and $\hat{q}(x_{k};x_{k-1},u_{k-1})$. Then the expectation of $d_{k-1}$ can be represented as follows,
\begin{equation*}
    \mathbb{E}[d_{k-1}] = \mathbb{E}[\log \pi_{k}(\theta^{\ast})]- \mathbb{E}[\log \pi_{k-1}(\theta^{\ast})] .
\end{equation*}

For any $n,$ taking summation over $k$ from $1$ to $n$ on both sides, we have
\begin{equation*}
    \sum_{k=1}^{n}\mathbb{E}[d_{k-1}] = \mathbb{E}[\log \pi_{n}(\theta^{\ast})] - \log \pi_{0}(\theta^{\ast}) \le -\log \pi_{0}(\theta^{\ast}) < \infty,
\end{equation*}
where the last inequality holds according to condition (i).

Therefore, take $n\rightarrow \infty$ and we get
\begin{equation*}
    \sum_{k=1}^{\infty}\mathbb{E}[d_{k-1}] \le -\log \pi_{0}(\theta^{\ast}) < \infty.
\end{equation*}

By \textit{Markov Inequality}, we know that for any $\epsilon>0,$
$$\sum _{k=0}^\infty \mathrm{Pr}[d_k\geq \epsilon]\leq \frac{1}{\epsilon}\sum _{k=0}^\infty \mathbb{E}[d_k]<\infty.$$

We can then apply \textit{Borel-Cantelli Lemma} and show that $P(d_k\geq\epsilon, i.o.)=0,$ which further implies $\lim_{k\rightarrow \infty}d_k = 0,$ with probability 1.

Moreover, since $d_k\geq 0,$ by \textit{Tonelli's Theorem}, we have 
$$ \mathbb{E}\left[\sum _{k=0}^\infty d_k\right] = \sum _{k=0}^\infty \mathbb{E}[d_k] \leq -\log \pi_{0}(\theta^{\ast}).$$

Since $\sum _{k=0}^\infty d_k$ has bounded expectation, it must be finite with probability 1.

Note that the \textit{total variation distance} between two distributions is related to the \textit{relative entropy} by \textit{Pinsker's Inequality}: 
{\small
\begin{equation*}
    ||q(x_{k};\theta^{\ast},x_{k-1},u_{k-1})-\hat{q}(x_{k};x_{k-1},u_{k-1})||_{TV}\le \sqrt{2d_{k-1}},
\end{equation*}
}
where 
\begin{equation*}
\begin{split}
    &||q(x_{k};\theta^{\ast},x_{k-1},u_{k-1})-\hat{q}(x_{k};x_{k-1},u_{k-1})||_{TV}=\\
    &\sup_{x_{k}}|q(x_{k};\theta^{\ast},x_{k-1},u_{k-1})-\hat{q}(x_{k};x_{k-1},u_{k-1})|.
\end{split}
\end{equation*}

Letting $k\rightarrow \infty,$ by the convergence of $d_k,$ we have
{\small
\begin{equation*}
    \lim_{k\rightarrow \infty} \int_{x_{k}}|q(x_{k};\theta^{\ast},x_{k-1},u_{k-1})-\hat{q}(x_{k};x_{k-1},u_{k-1})|dx_{k} = 0,
\end{equation*}
}
with probability 1.

According to \textit{Dominated Convergence Theorem}, we further have 
{\small
\begin{equation*}
    \int_{x_{k}} \lim_{k\rightarrow \infty} |q(x_{k};\theta^{\ast},x_{k-1},u_{k-1})-\hat{q}(x_{k};x_{k-1},u_{k-1})|dx_{k} = 0.
\end{equation*}
}
Moreover, since $$|q(x_{k};\theta^{\ast},x_{k-1},u_{k-1})-\hat{q}(x_{k};x_{k-1},u_{k-1})| \ge 0$$ and $q(x_{k};\theta^{\ast},x_{k-1},u_{k-1})-\hat{q}(x_{k};x_{k-1},u_{k-1})$ is continuous in $x_{k}$, then for any $x_{k}$,
\begin{equation*}
    \lim_{k\rightarrow \infty}|q(x_{k};\theta^{\ast},x_{k-1},u_{k-1})-\hat{q}(x_{k};x_{k-1},u_{k-1})|=0,
\end{equation*}
which means
\begin{equation}\label{eqn:measurement convergence}
    \lim_{k\rightarrow \infty} \hat{q}(x_{k};x_{k-1},u_{k-1})=q(x_{k};\theta^{\ast},x_{k-1},u_{k-1}),
\end{equation}
with probability 1.
\end{proof}

Then we prove that the posterior distribution of $\theta$ converges to the distribution $\delta_{\theta^{\ast}}(\theta)$ based on conditions (ii) and (iii).

\begin{proof}
Note that
\begin{equation}\label{eqn:measurement distance}
\begin{split}
    &q(x_{k};\theta^{\ast},x_{k-1},u_{k-1})-\hat{q}(x_{k};x_{k-1},u_{k-1}) \\
    =&[1-\pi_{k-1}(\theta^{\ast})]q(x_{k};\theta^{\ast},x_{k-1},u_{k-1})\\
    &-\sum_{\theta \ne \theta^{\ast}}\pi_{k-1}(\theta)q(x_{k};\theta,x_{k-1},u_{k-1}).
\end{split}
\end{equation}

Note that for any $t > 0$, $(\pi_{t}(\theta_1), \pi_{t}(\theta_2), \cdots)$ is an infinitely dimensional bounded vector with all components sum up to 1, we can take a subsequence $\{\pi_{t_k}\}$ such that for each component $j$, $\{\pi_{t_k}(\theta_j)\}$ converges to a limit which is denoted by $\pi_{\infty}(\theta_j)$, which is also known as weak convergence (of a deterministic sequence). 

Next, we will show that $\pi_{\infty}(\theta)$ is a normalized vector. Note that for any $j \in \mathbb{N}$, $\lim_{t_k \to \infty} \pi_{t_k}(\theta_j) = \pi_{\infty}(\theta_j)$, which is equivalent to
\begin{equation*}
    \forall \epsilon_j > 0, \exists N \in \mathbb{N}, s.t. \forall n \geq N, |\pi_{\infty}(\theta_j) - \pi_{n}(\theta_j)| \leq \epsilon.
\end{equation*}
Therefore, we have
\begin{equation}\label{eqn: posterior_sum}
    - \epsilon_j < \pi_{\infty}(\theta_j) - \pi_n(\theta_j) < \epsilon_j, j=1,2,\cdots
\end{equation}

According to the Bayesian update rule, we know $\sum_{j=1}^{\infty}\pi_{n}(\theta_j) = 1$. It then follows that $\forall \epsilon > 0$, take $\epsilon_j = \frac{\epsilon}{2^{j}}$ and sum over \eqref{eqn: posterior_sum} for
all $j \in \mathbb{N}$, we get
\begin{equation*}
    -(\frac{\epsilon}{2^{1}} + \frac{\epsilon}{2^{2}} + \cdots) <  \sum_{j=1}^{\infty} \pi_{\infty}(\theta_j) - 1 < (\frac{\epsilon}{2^{1}} + \frac{\epsilon}{2^{2}} + \cdots),
\end{equation*}
which indicates $\forall \epsilon > 0$, $|\sum_{j=1}^{\infty} \pi_{\infty}(\theta_j)-1| < \epsilon$, and it implies that $\sum_{j=1}^{\infty} \pi_{\infty}(\theta_j) = 1$. So the limit is also a valid probability simplex. 

Since every weakly convergent sequence in $L^{1}$ is strongly convergent (cf. Chapter 2 in \cite{Pedersen2012AnalysisNow}), then any convergent subsequence of $\{\pi_{t_k}\}$ has the same limit $(\pi_{\infty}(\theta_1), \pi_{\infty}(\theta_2), \cdots)$.

According to condition (ii), take limit over \eqref{eqn:measurement distance} along each convergent subsequence $\{x^{m}_{k},u^{m}_{k}\}_{k=1:\infty}$, and by \eqref{eqn:measurement convergence}, we have $\forall m \in \mathcal{M}$,
{\small
\begin{equation}\label{eqn:strongly distinguishable}
    [1-\pi_{\infty}(\theta^{\ast})]q(\cdot;\theta^{\ast},x^{m}_{\infty},u^{m}_{\infty})-\sum_{\theta \ne \theta^{\ast}}\pi_{\infty}(\theta)q(\cdot;\theta,x^{m}_{\infty},u^{m}_{\infty}) = 0, 
\end{equation}
}
with probability 1.

Since the combination of blind regions of $\mathcal{P}(\eta_{m})$ in the converged context $\eta_{m}=[x^{m}_{\infty},u^{m}_{\infty}]$ is empty, then from Proposition \ref{prop:comb BR}, the blind region of $\{\mathcal{P}(\eta_{m}), m \in \mathcal{M}\}$ is empty.
Therefore, the elements within $\Theta$ are strongly observationally distinguishable from $\{\mathcal{P}(\eta_{m}), m \in \mathcal{M}\}$. Then from \eqref{eqn:strongly distinguishable} we have
$
1-\pi_{\infty}(\theta^{\ast})=0, 
\pi_{\infty}(\theta)=0 ~~\forall \theta \ne \theta^{\ast},
$
which implies
\begin{equation*}
    \lim_{k\rightarrow \infty} \pi_{k}(\theta)=\delta_{\theta^{\ast}}(\theta),
\end{equation*}
with probability 1.
\end{proof}

\section{Proof of Theorem \ref{thm:RA Lyapunov stability} (Risk-averse Lyapunov Stability Theorem)}\label{appendix:RA Lyapunov stability}

To prove Theorem \ref{thm:RA Lyapunov stability}, we make use of some properties of the comparison function classes, whose proofs can be found in \cite{Kellett2014AResults} and \cite{Bof2018LyapunovSystems}.

\begin{lemma}
If $\alpha_{1}$ and $\alpha_{2}$ are $\mathcal{K}$ functions ($\mathcal{K}_{\infty}$ functions), then $\alpha_{1}^{-1}$ and $(\alpha_{1}\circ \alpha_{2}):=\alpha_{1}(\alpha_{2}(\cdot))$ are $\mathcal{K}$ functions ($\mathcal{K}_{\infty}$ functions).
\end{lemma}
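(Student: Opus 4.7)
The plan is to verify each defining property of class $\mathcal{K}$ (continuity, vanishing at the origin, strict monotonicity) and, where applicable, class $\mathcal{K}_{\infty}$ (additional radial unboundedness) for the inverse $\alpha_{1}^{-1}$ and the composition $\alpha_{1}\circ \alpha_{2}$, relying only on elementary properties of strictly monotone continuous functions on the half-line $[0,\infty)$.

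For the inverse, I would first note that any $\alpha_{1}\in \mathcal{K}$ is a continuous, strictly increasing map from $[0,\infty)$ onto its image $I:=\alpha_{1}([0,\infty))\subseteq [0,\infty)$ with $\alpha_{1}(0)=0$, hence a bijection onto $I$. Invoking the standard real-analysis fact that a strictly monotone continuous bijection between subintervals of $\mathbb{R}$ has a continuous, strictly monotone inverse, one obtains $\alpha_{1}^{-1}\colon I\to [0,\infty)$ that is continuous, strictly increasing, and sends $0$ to $0$, so $\alpha_{1}^{-1}\in \mathcal{K}$. If additionally $\alpha_{1}\in \mathcal{K}_{\infty}$, radial unboundedness forces $I=[0,\infty)$, and $\alpha_{1}(s)\to \infty$ transfers through the inverse to give $\alpha_{1}^{-1}(s)\to \infty$ as $s\to \infty$, placing $\alpha_{1}^{-1}$ in $\mathcal{K}_{\infty}$.

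For the composition, continuity of $\alpha_{1}\circ \alpha_{2}$ is immediate from continuity of each factor. Strict monotonicity is preserved under composition of strictly increasing maps, and $(\alpha_{1}\circ \alpha_{2})(0)=\alpha_{1}(\alpha_{2}(0))=\alpha_{1}(0)=0$, giving $\alpha_{1}\circ \alpha_{2}\in \mathcal{K}$. For the $\mathcal{K}_{\infty}$ case, as $s\to \infty$ one has $\alpha_{2}(s)\to \infty$ by radial unboundedness of $\alpha_{2}$, and then $\alpha_{1}(\alpha_{2}(s))\to \infty$ by the same property of $\alpha_{1}$, so $\alpha_{1}\circ \alpha_{2}\in \mathcal{K}_{\infty}$.

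The main (minor) subtlety is a domain issue for the inverse: when $\alpha_{1}$ is only of class $\mathcal{K}$, the inverse is a priori defined only on $I$, which may be a proper subinterval of $[0,\infty)$. This does not affect the downstream use in Theorem \ref{thm:RA Lyapunov stability}, where $\alpha_{1}^{-1}$ is composed with quantities already known to lie in $I$, but it is worth flagging when stating or applying the lemma in full generality.
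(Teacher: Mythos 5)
Your proof is correct. Note, however, that the paper does not actually prove this lemma: it states it as a known property of comparison function classes and defers entirely to the cited references (Kellett's survey and Bof et al.), so there is no in-paper argument to compare against. Your self-contained proof is the standard one that those references give — inverse of a continuous strictly increasing bijection is continuous and strictly increasing, fixes the origin, and inherits radial unboundedness in the $\mathcal{K}_{\infty}$ case; composition preserves continuity, strict monotonicity, the value at zero, and divergence at infinity. Your flag about the domain of $\alpha_{1}^{-1}$ when $\alpha_{1}\in\mathcal{K}\setminus\mathcal{K}_{\infty}$ is a genuine and worthwhile caveat: in that case $\alpha_{1}^{-1}$ is defined only on $[0,\sup_{s}\alpha_{1}(s))$, so the lemma's claim that it is ``a $\mathcal{K}$ function'' holds only in the restricted sense of a $\mathcal{K}$ function on that subinterval. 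This does not cause trouble where the paper uses the lemma (in the proof of the risk-averse Lyapunov stability theorem the inverses $\alpha_{1}^{-1},\alpha_{2}^{-1}$ are taken of $\mathcal{K}_{\infty}$ functions, where the issue disappears), but your observation is exactly the right precision to add if the lemma were to be stated in full generality.
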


\begin{lemma}
If $\alpha$ is a $\mathcal{K}$ function and $\sigma$ is a $\mathcal{L}$ function, then $(\alpha \circ \sigma):=\alpha(\sigma(\cdot))$ is a $\mathcal{L}$ function.
\end{lemma}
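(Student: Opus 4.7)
The plan is to verify directly the three defining properties of a class-$\mathcal{L}$ function for the composition $h := \alpha \circ \sigma$: that $h$ maps $\mathbb{R}_{\geq 0}$ into $\mathbb{R}_{\geq 0}$ continuously, is strictly decreasing, and satisfies $\lim_{s\to\infty} h(s) = 0$. Well-definedness and continuity are immediate from the fact that $\sigma$ and $\alpha$ both have domain and codomain $\mathbb{R}_{\geq 0}$ and are continuous by definition, so their composition is continuous and $\mathbb{R}_{\geq 0}$-valued.

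For strict monotonicity, I would take arbitrary $s_1 < s_2$ in $\mathbb{R}_{\geq 0}$. Since $\sigma \in \mathcal{L}$ is strictly decreasing, $\sigma(s_1) > \sigma(s_2)$, and since $\alpha \in \mathcal{K}$ is strictly increasing, applying $\alpha$ to both sides preserves the strict inequality, yielding $\alpha(\sigma(s_1)) > \alpha(\sigma(s_2))$, i.e., $h(s_1) > h(s_2)$. Thus $h$ is strictly decreasing on $\mathbb{R}_{\geq 0}$.

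For the limit at infinity, I would invoke $\lim_{s\to\infty}\sigma(s) = 0$ together with the continuity of $\alpha$ at the origin and the class-$\mathcal{K}$ property $\alpha(0) = 0$. Passing the limit inside the continuous function $\alpha$ gives
\begin{equation*}
\lim_{s\to\infty} h(s) \;=\; \alpha\bigl(\lim_{s\to\infty}\sigma(s)\bigr) \;=\; \alpha(0) \;=\; 0.
\end{equation*}
Combining the three verifications establishes $h \in \mathcal{L}$. There is no serious obstacle here; the lemma is a routine packaging of the definitions. The only small point worth flagging is that $\sigma(s)$ must lie in the domain of $\alpha$ for the composition to make sense, which is automatic since the codomain of $\sigma$ is $\mathbb{R}_{\geq 0}$, matching the domain of $\alpha$.
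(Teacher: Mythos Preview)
Your proof is correct. The paper does not actually supply its own proof of this lemma; it simply cites \cite{Kellett2014AResults} and \cite{Bof2018LyapunovSystems} for the proofs of all the comparison-function lemmas, so your direct verification of the three defining properties is exactly the standard argument one would expect and there is nothing further to compare.
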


\begin{lemma}
If $\alpha_{1}$ and $\alpha_{2}$ are $\mathcal{K}$ functions and $\beta$ is a $\mathcal{KL}$ function, then $\beta'(r,s):=\alpha_{1}(\beta(\alpha_{2}(r),s))$ is a $\mathcal{KL}$ function.
\end{lemma}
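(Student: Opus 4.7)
The approach is to directly verify the two defining properties of a $\mathcal{KL}$ function for $\beta'(r,s)$: that it is class-$\mathcal{K}$ in the first argument $r$ (for each fixed $s \in \mathbb{I}_{\ge 0}$) and class-$\mathcal{L}$ in the second argument $s$ (for each fixed $r \in \mathbb{R}_{\ge 0}$). Both reductions will invoke the two preceding composition lemmas almost verbatim, so no new machinery is needed.

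For the $\mathcal{K}$-in-$r$ property, I would fix $s$ and view $\beta(\cdot, s)$ as a class-$\mathcal{K}$ function by the $\mathcal{KL}$ hypothesis on $\beta$. The map $r \mapsto \beta(\alpha_{2}(r), s) = \bigl(\beta(\cdot, s) \circ \alpha_{2}\bigr)(r)$ is then a composition of two $\mathcal{K}$ functions and is therefore class-$\mathcal{K}$ by the first preceding lemma. Composing once more with $\alpha_{1} \in \mathcal{K}$ and applying that same lemma yields $\beta'(\cdot, s) \in \mathcal{K}$. For the $\mathcal{L}$-in-$s$ property, I would fix $r$ and set $a := \alpha_{2}(r) \in \mathbb{R}_{\ge 0}$, which is a constant. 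Then $s \mapsto \beta(a, s)$ is class-$\mathcal{L}$ by the $\mathcal{KL}$ hypothesis on $\beta$, and the second preceding lemma applied with $\alpha := \alpha_{1} \in \mathcal{K}$ and $\sigma := \beta(a, \cdot) \in \mathcal{L}$ gives that $s \mapsto \alpha_{1}(\beta(a, s)) = \beta'(r, s)$ is class-$\mathcal{L}$.

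The only point requiring minor care, and the closest thing to an obstacle here, is bookkeeping on domains and codomains: one must confirm that every intermediate composition lands in $\mathbb{R}_{\ge 0}$ so that the $\mathcal{K}$ and $\mathcal{L}$ definitions actually apply. Since class-$\mathcal{K}$ functions map $\mathbb{R}_{\ge 0}$ into $\mathbb{R}_{\ge 0}$, $\beta$ takes values in $\mathbb{R}_{\ge 0}$, and for each fixed $s$ the evaluation $\beta(\cdot, s)$ sends $\mathbb{R}_{\ge 0}$ into $\mathbb{R}_{\ge 0}$, the chain $\alpha_{1} \circ \beta(\cdot, s) \circ \alpha_{2}$ is well defined without adjustment. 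Combining the two verified properties shows $\beta' \in \mathcal{KL}$, completing the proof.
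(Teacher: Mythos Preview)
Your proposal is correct and follows the standard direct-verification approach. The paper itself does not supply a proof of this lemma; it simply cites \cite{Kellett2014AResults} and \cite{Bof2018LyapunovSystems} for all the comparison-function lemmas, and your argument is precisely the routine composition argument one finds in those references.
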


\begin{lemma}\label{lemma:rho lower bound}
Let $\rho: \mathbb{R}^{n} \rightarrow \mathbb{R}_{\ge 0}$ be a continuous positive definite function. Then, there exist functions $\alpha \in \mathcal{K}_{\infty}$ and $\sigma \in \mathcal{L}$ such that 
\begin{equation*}
    \rho(x) \ge \alpha(|x|)\sigma(|x|), \quad \forall x \in \mathbb{R}^{n}.
\end{equation*}
\end{lemma}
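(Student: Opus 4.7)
The plan is to reduce the statement to a scalar inequality on $r = |x|$ and then construct the two comparison functions by factoring a continuous positive radial envelope. First I would define $m(r) := \inf_{|x|=r}\rho(x)$; compactness of the sphere and continuity of $\rho$ make the infimum attained, and positive definiteness forces $m(r) > 0$ for $r > 0$ while $m(0) = 0$. Since $\rho(x) \ge m(|x|)$ trivially, it suffices to exhibit $\alpha \in \mathcal{K}_\infty$ and $\sigma \in \mathcal{L}$ satisfying $\alpha(r)\sigma(r) \le m(r)$ for every $r \ge 0$.

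Next I would replace $m$ with a continuous positive lower envelope $\tilde m$, since $m$ itself is only lower semicontinuous in general. On each dyadic shell $\{2^{k-1} \le |x| \le 2^{k+1}\}$ the continuous, positive definite $\rho$ attains a strict positive minimum $c_k := \min_{|x|\in[2^{k-1},2^{k+1}]}\rho(x) > 0$. Piecewise-linear interpolation through the sample points $\bigl(2^k,\, \min(c_{k-1},c_k)/2\bigr)_{k \in \mathbb{Z}}$, anchored by $\tilde m(0) = 0$, gives a continuous $\tilde m$ with $\tilde m(r) \le m(r)$ everywhere and $\tilde m > 0$ on $(0,\infty)$; by further shrinking the anchor values near $0$, I can also arrange $\tilde m$ to be monotone non-decreasing on a neighborhood $[0, r_0]$ of the origin.

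The final step is to factor $\tilde m$ as $\alpha\sigma$. I would pick $\alpha \in \mathcal{K}_\infty$ that vanishes at $0$ at the same rate as $\tilde m$ and grows linearly at infinity, for instance by setting $\alpha(r) := \tilde m(r)$ on $[0, r_0]$ and extending $\alpha$ to a continuous strictly increasing function with $\alpha(r) \to \infty$. Then $\hat\sigma(r) := \tilde m(r)/\alpha(r)$ is continuous, positive, bounded by $1$, with $\hat\sigma(0^+) = 1$ by the rate-matching. Setting
\begin{equation*}
\sigma(r) := e^{-r}\,\inf_{s \in [0,r]}\bigl(e^{s}\hat\sigma(s)\bigr),
\end{equation*}
and taking $s = r$ in the infimum gives $\sigma(r) \le \hat\sigma(r)$, so $\alpha(r)\sigma(r) \le \tilde m(r) \le m(r) \le \rho(x)$ whenever $|x| = r$; the $e^{-r}$ damping turns the monotone envelope into a strictly decreasing function with $\sigma(\infty) = 0$, and continuity of the infimum of a continuous function over a varying compact interval gives continuity of $\sigma$, placing $\sigma \in \mathcal{L}$.

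The hard part will be the simultaneous reconciliation of the four demands on $\sigma$ in the final step: continuity, strict monotone decrease, the pointwise bound $\sigma \le \tilde m/\alpha$, and $\sigma(0) > 0$. A class-$\mathcal{L}$ function cannot start at $0$ and remain nonnegative while decreasing strictly, so $\alpha$ must be chosen to vanish no faster than $\tilde m$ near the origin. Naive choices such as $\alpha(r) := r$ or $\alpha(r) := r + \sup_{s \in [0,r]}\tilde m(s)$ fail this criterion when $\tilde m(r) = o(r)$ as $r \to 0^+$, which is precisely the case one must guard against; once the local rate-matching is secured, the global growth of $\alpha$ and decay of $\sigma$ are handled by the linear tail of $\alpha$ and the exponential damping, respectively.
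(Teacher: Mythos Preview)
The paper does not supply its own proof of this lemma; it is listed among several comparison-function facts whose proofs are deferred to the cited references (Kellett's compendium and Bof et al.). Your constructive argument is therefore not competing against anything in the paper itself, and it follows essentially the standard route one finds in those sources: pass to the radial minimum, lower-bound by a continuous positive-definite scalar envelope, and then factor that envelope as a $\mathcal{K}_\infty$ function times a class-$\mathcal{L}$ function, with the exponential damping trick to force strict decrease of $\sigma$.

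Two small corrections are worth making before you finalize. First, the radial minimum $m(r)=\min_{|x|=r}\rho(x)$ is in fact continuous, not merely lower semicontinuous: write $m(r)=\min_{u\in S^{n-1}}\rho(ru)$ and use joint continuity of $(r,u)\mapsto\rho(ru)$ together with compactness of $S^{n-1}$. Your dyadic-shell construction of $\tilde m$ is therefore unnecessary, though it does no harm. Second, setting $\alpha:=\tilde m$ on $[0,r_0]$ only yields a class-$\mathcal{K}$ function if $\tilde m$ is \emph{strictly} increasing there, not merely non-decreasing as you arranged; you should either choose the anchor values to be strictly increasing (easy, since you control them), or multiply by a strictly increasing factor such as $r/(1+r)$ that does not disturb the vanishing rate at the origin. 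With those adjustments the argument is complete, and your identification of the rate-matching at $r=0$ as the delicate point is exactly right.
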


\begin{lemma}\label{lemma:rho bound}
Let $\rho: D \rightarrow \mathbb{R}_{\ge 0}$ be a continuous positive definite function defined on a domain $D \subset \mathbb{R}^{n}$ that contains the origin. Let $B_{r}:= \{x \in \mathbb{R}^{n_{x}}| |x| < r\} \subset D$ for some $r>0$. Then, there exist functions $\alpha_{1}, \alpha_{2} \in \mathcal{K}$ defined on $[0,r)$ such that
\begin{equation*}
    \alpha_{1}(|x|) \le \rho(x) \le \alpha_{2}(|x|), \quad \forall x \in B_{r}.
\end{equation*}
\end{lemma}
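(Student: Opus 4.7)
The plan is to construct the two bounds separately by extremizing $\rho$ over balls and spheres inside $B_r$, and then performing a small monotonization/padding step to upgrade the resulting continuous, zero-at-zero scalar functions to genuine class-$\mathcal{K}$ functions. This is the classical ``sandwich'' result for continuous positive definite functions (see, e.g., Khalil, \emph{Nonlinear Systems}, Lemma~4.3), so the main work is just careful bookkeeping near the two endpoints of $[0,r)$.

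For the upper bound, I would define $\bar{\rho}(s):=\max_{|x|\le s}\rho(x)$ for $s\in[0,r)$. For any $s<r$, the closed ball $\bar{B}_s$ is compact and contained in $B_r\subset D$, so by continuity of $\rho$ the maximum is attained; $\bar{\rho}$ is then continuous, nondecreasing, and satisfies $\bar{\rho}(0)=\rho(0)=0$. Setting $\alpha_2(s):=\bar{\rho}(s)+s$ gives a continuous, strictly increasing, zero-at-zero function on $[0,r)$, hence $\alpha_2\in\mathcal{K}$, and the upper bound $\rho(x)\le\bar{\rho}(|x|)\le\alpha_2(|x|)$ on $B_r$ is immediate.

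For the lower bound, I would define $\underline{\rho}(s):=\min_{|x|=s}\rho(x)$ on $[0,r)$. Compactness of each sphere together with continuity and positive definiteness of $\rho$ yields that $\underline{\rho}$ is continuous, $\underline{\rho}(0)=0$, and $\underline{\rho}(s)>0$ for $s>0$, and clearly $\rho(x)\ge\underline{\rho}(|x|)$ on $B_r$. The key issue is that $\underline{\rho}$ need not be monotone, so I cannot simply take $\alpha_1=\underline{\rho}$. I would monotonize by setting $\tilde{\rho}(s):=\min_{t\in[s,(s+r)/2]}\underline{\rho}(t)$ for $s\in[0,r)$, which is nondecreasing, continuous, zero at zero, and strictly positive on $(0,r)$ because $\underline{\rho}$ is continuous and strictly positive on the compact sub-interval $[s,(s+r)/2]$ whenever $s>0$. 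A small padding term then enforces strict monotonicity: e.g., build $\alpha_1$ piecewise on an exhaustion $[0,r_n]\uparrow[0,r)$ as a continuous, strictly increasing function with $\alpha_1(0)=0$ and $\alpha_1(s)\le\tilde{\rho}(s)$ throughout, yielding $\alpha_1\in\mathcal{K}$ and the desired lower bound $\alpha_1(|x|)\le\underline{\rho}(|x|)\le\rho(x)$ on $B_r$.

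The main obstacle is the strict-monotonicity step for the lower bound together with the bookkeeping at the two endpoints: near $s=0$ one must control the rate at which $\underline{\rho}$ vanishes, and near $s=r$ one must contend with the fact that $\underline{\rho}$ may dip toward zero as $|x|\to r^-$ because the lemma does not assume $\bar{B}_r\subset D$. Both difficulties are resolved by the local exhaustion by compact balls $\bar{B}_{r_n}\subset B_r$, on each of which $\underline{\rho}$ is bounded below by a strictly positive continuous function of $s$ that makes the piecewise construction of $\alpha_1$ routine.
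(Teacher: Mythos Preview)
The paper does not actually prove this lemma; it is listed among ``some properties of the comparison function classes, whose proofs can be found in \cite{Kellett2014AResults} and \cite{Bof2018LyapunovSystems}.'' Your approach is exactly the classical Khalil-style sandwich construction that those references use, so at the level of strategy you are aligned with what the paper invokes.

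One technical slip is worth flagging: your monotonization $\tilde{\rho}(s):=\min_{t\in[s,(s+r)/2]}\underline{\rho}(t)$ is \emph{not} nondecreasing in general. As $s$ increases, both endpoints of the window $[s,(s+r)/2]$ slide to the right, so the window can move off a local minimum of $\underline{\rho}$ and onto a deeper dip further out, making $\tilde{\rho}$ decrease. (Concretely, take $r=1$ and let $\underline{\rho}$ have a shallow dip near $t=0.7$ and a deeper one near $t=0.75$; then $\tilde{\rho}(0.4)>\tilde{\rho}(0.5)$.) The standard fix is to minimize over $[s,r_0]$ for a \emph{fixed} right endpoint $r_0<r$, which genuinely gives a nondecreasing function on $[0,r_0]$, and then patch across an exhaustion $r_n\uparrow r$ --- which is precisely the piecewise construction you describe in the next sentence. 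So your plan ultimately works; just drop the sliding-window formula and go straight to the fixed-endpoint/exhaustion argument.
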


Now we can prove Theorem \ref{thm:RA Lyapunov stability} based on the above lemmas.

\begin{proof}
According to Lemma \ref{lemma:rho lower bound}, there exist functions $\alpha \in \mathcal{K}_{\infty}$ and $\sigma \in \mathcal{L}$ such that 
\begin{equation*}
    \rho(x) \ge \alpha(|x|)\sigma(|x|), \quad \forall x \in \mathbb{R}^{n_{x}}.
\end{equation*}

Since the functions $\alpha_{1},\alpha_{2} \in \mathcal{K}_{\infty}$, they are both invertible and we define 
\begin{equation*}
    \hat{\alpha} := \alpha \circ \alpha_{2}^{-1} \in \mathcal{K}_{\infty} \quad \text{and} \quad
    \hat{\sigma} := \sigma \circ \alpha_{1}^{-1} \in \mathcal{L}.
\end{equation*}

Finally, we define $\hat{\rho}: \mathbb{R}_{\ge 0} \rightarrow \mathbb{R}_{\ge 0}$ by
\begin{equation*}
    \hat{\rho}(s) := \hat{\alpha}(s)\hat{\sigma}(s), \quad \forall s \in \mathbb{R}_{\ge 0}.
\end{equation*}
Note that the product of a class-$\mathcal{K}_{\infty}$ function and a class-$\mathcal{L}$ function is a positive definite function and hence $\hat{\rho}$ is a positive definite function. 

According to condition (ii), we have
\begin{equation*}
    \begin{split}
        &\gamma[V(f(x,u,w,\theta))] - V(x) \le -\rho(x) 
        \le -\alpha(|x|)\sigma(|x|)  \\
        &\le -\alpha(\alpha_{2}^{-1}V(x))\sigma(\alpha_{1}^{-1}V(x)) = -\hat{\rho}(V(x)).
    \end{split}
\end{equation*}
If we further define 
$$
\bar{\rho}(s) := \inf \{ s, \hat{\rho}(s)\}, \quad \forall s \in \mathbb{R}_{\ge 0},
$$
then we also have 
\begin{equation}\label{eqn:decrease bound}
    \gamma[V(f(x,u,w,\theta))] - V(x) \le -\bar{\rho}(V(x)).
\end{equation}

According to the fact that
$
\sup_{\theta}\{a(\theta)\}-\sup_{\theta}\{b(\theta)\} \le \sup_{\theta}\{a(\theta)-b(\theta)\}
$, we have 
\begin{equation*}
\begin{split}
    &\gamma[\gamma[V(f(x,u,w,\theta))]] - \gamma[V(x)] \\
    &\le \gamma[\gamma[V(f(x,u,w,\theta))] - V(x)] 
    \le -\gamma[\bar{\rho}(V(x))] 
\end{split}
\end{equation*}

Let $x_{i}:=\phi(i;x,\theta,\boldsymbol{\mu},\boldsymbol{w})$, then by induction we have
\begin{equation*}
\begin{split}
    \bar{\gamma}_{i}[V(x_{i+1})] - \bar{\gamma}_{i-1}[V(x_{i})] & \le -\bar{\gamma}_{i-1}[\bar{\rho}(V(x_{i}))]. \\ 
\end{split}
\end{equation*}

Therefore, we have
\begin{equation*}
    \begin{split}
        \bar{\gamma}_{i-1}[V(x_{i})] &= V(x_{0}) + \sum_{k=0}^{i-1}[\bar{\gamma}_{k}[V(x_{k+1})] - \bar{\gamma}_{k-1}[V(x_{k})]] \\
        &\le V(x_{0}) - \sum_{k=0}^{i-1}\bar{\gamma}_{k-1}[\bar{\rho}(V(x_{k}))] \\
        &:= \bar{\beta}(V(x_{0}), i-1).
    \end{split}
\end{equation*}

Since $\bar{\rho}(\cdot)$ is positive definite, according to Lemma \ref{lemma:rho bound}, there exists a function $\alpha_{3} \in \mathcal{K}$ defined on $[0, V(x_{0}))$ such that 
$$
\bar{\rho}(s) \ge \alpha_{3}(|s|), \quad \forall s \in [0, V(x_{0})).
$$
Therefore, we have
\begin{equation*}
    \begin{split}
        \bar{\beta}(V(x_{0}), i) - \bar{\beta}(V(x_{0}), i-1) &= -\bar{\gamma}_{i-1}[\bar{\rho}(V(x_{i}))] \\
        &\le -\bar{\gamma}_{i-1}[\alpha_{3}(V(x_{i}))]
    \end{split}
\end{equation*}
which implies that $\bar{\beta}(V(x_{0}), \cdot)$ is strictly decreasing when $V(x_{i})>0$. Since $\bar{\beta}(V(x_{0}), i-1) \ge \bar{\gamma}_{i-1}[V(x_{i})]\ge 0$, we know that $\bar{\beta}(V(x_{0}), \cdot)$ is lower bounded by 0. Thus $\bar{\beta}(V(x_{0}), \cdot)$ is strictly decreasing to zero, i.e., $\bar{\beta}(V(x_{0}), \cdot) \in \mathcal{L}$.

Besides, define 
$$
\tilde{\rho}(s):= s - \bar{\rho}(s), \quad \forall s \in \mathbb{R}_{\ge 0},
$$
then $\tilde{\rho}$ is also a positive definite function. According to Lemma \ref{lemma:rho bound}, there exists a function $\alpha_{4} \in \mathcal{K}$ defined on $[0, V(x_{0}))$ such that 
$$
\tilde{\rho}(s) \le \alpha_{4}(|s|), \quad \forall s \in [0, V(x_{0})).
$$
Then we have
\begin{equation*}
    \begin{split}
        \bar{\beta}(V(x_{0}), i) \le \bar{\beta}(V(x_{0}), 0) = \tilde{\rho}(V(x_{0})) \le \alpha_{4}(V(x_{0})).
    \end{split}
\end{equation*}

Therefore, based on the analysis of $\bar{\beta}(V(x_{0}),i)$ above, there exists a function $\hat{\beta} \in \mathcal{KL}$ such that
$$
\bar{\gamma}_{i-1}[V(x_{i})] \le \bar{\beta}(V(x_{0}), i) \le \hat{\beta}(V(x_{0}), i), \quad \forall i \in \mathbb{N}.
$$

According to the $\textit{Monotonicity}$ of coherent risk measures, by manipulating the upper and lower bounds in condition (i) we obtain
$$
\bar{\gamma}_{i-1}[|x_{i}|] \le \alpha_{1}^{-1}\hat{\beta}(\alpha_{2}(x_{0}), i):=\beta(|x_{0}|,i),
$$
for all $i \in \mathbb{N}$, proving global asymptotic stability of the origin.
\end{proof}

\section{Proof of Theorem \ref{thm:RA MPC stability}(Risk-averse MPC Stability Conditions)}\label{appendix:RA MPC stability}

To prove the stability of the risk-averse MPC, we first prove the monotonicity property of the value function $V_{N}^{\ast}(x)$.

\begin{lemma}[Monotonicity of the value function]\label{lemma:Mono value}
The value function $V_{N}^{\ast}(x)$ is monotone, i.e.,
\begin{equation}
    V_{i+1}^{\ast}(x) \le V_{i}^{\ast}(x), \quad \forall i \in \mathbb{I}_{\ge 0}.
\end{equation}
\end{lemma}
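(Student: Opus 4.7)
The plan is to prove the monotonicity claim by induction on the time-to-go index $i$, using the DP recursion \eqref{eqn:DP value} together with the terminal descent hypothesis (condition (iii) of Theorem \ref{thm:RA MPC stability}) as the base case and the \emph{Monotonicity} axiom of coherent risk measures as the engine for the inductive step. Since the lemma will be invoked inside the proof of Theorem \ref{thm:RA MPC stability}, I would freely use its hypotheses, and I would state the inequality on the domain where $V_{N}^{\ast}$ is defined (the set of states for which the MPC problem is feasible), which is non-empty and forward invariant under the recursive-feasibility assumption made in Section \ref{sec:problem formulation}.

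For the base case $i=0$, I would unfold
\begin{equation*}
V_{1}^{\ast}(x) \;=\; \inf_{u}\bigl\{\,l(x,u) + \gamma[V_{f}(f(x,u,w,\theta))]\,\bigr\},
\end{equation*}
and apply condition (iii): for every $x \in \mathcal{X}_{f}$ there exists an admissible $u$ with $l(x,u) + \gamma[V_{f}(f(x,u,w,\theta))] \le V_{f}(x)$. Passing to the infimum over such $u$ yields $V_{1}^{\ast}(x) \le V_{f}(x) = V_{0}^{\ast}(x)$ on $\mathcal{X}_{f}$, and appending the terminal feedback from condition (iii) to any feasible $i$-step policy extends the inequality to the whole feasible domain. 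For the inductive step, assume $V_{i}^{\ast} \le V_{i-1}^{\ast}$ pointwise. Because $\gamma[\cdot] = \sup_{\theta \in \mathcal{A}} \mathbb{E}_{w}[\cdot]$ is a composition of a supremum and an expectation, both monotone, the hypothesis lifts to $\gamma[V_{i}^{\ast}(f(x,u,w,\theta))] \le \gamma[V_{i-1}^{\ast}(f(x,u,w,\theta))]$ for each admissible $u$; adding $l(x,u)$ and taking the infimum closes the induction via
\begin{equation*}
V_{i+1}^{\ast}(x) \;=\; \inf_{u}\bigl\{l(x,u)+\gamma[V_{i}^{\ast}(\cdot)]\bigr\} \;\le\; \inf_{u}\bigl\{l(x,u)+\gamma[V_{i-1}^{\ast}(\cdot)]\bigr\} \;=\; V_{i}^{\ast}(x).
\end{equation*}

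The main obstacle is essentially bookkeeping rather than analysis: one must verify that the $u$ promised by condition (iii) is admissible for the one-step MPC problem (so that it actually enters the infimum defining $V_{1}^{\ast}$) and that the feasible sets $\mathcal{X}_{i}$ nest in a way that is consistent with the recursive-feasibility hypothesis, so that applying the inductive hypothesis pointwise at the successor state $f(x,u,w,\theta)$ is legitimate. Beyond that, the argument is light: only the \emph{Monotonicity} property of $\mathcal{R}$ is invoked, while the remaining coherent-risk axioms (subadditivity, translation equivariance, positive homogeneity) play no role here and are reserved for the stability argument of Theorem \ref{thm:RA MPC stability} that follows.
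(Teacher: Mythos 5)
Your proof is correct and follows essentially the same route as the paper's: induction on the time-to-go index, with the base case $V_{1}^{\ast}\le V_{0}^{\ast}=V_{f}$ supplied by the terminal descent condition (iii) and the inductive step driven by monotonicity. The only cosmetic difference is in the inductive step, where you invoke the monotonicity of $\gamma[\cdot]=\sup_{\theta\in\mathcal{A}}\mathbb{E}_{w}[\cdot]$ and of the infimum directly, whereas the paper compares the suboptimal policy $\kappa_{i}$ against $\kappa_{i+1}$ and uses the inequality $\sup_{\theta}\{a(\theta)\}-\sup_{\theta}\{b(\theta)\}\le\sup_{\theta}\{a(\theta)-b(\theta)\}$; the two arguments are equivalent, and yours has the minor advantage of not requiring the infimum to be attained.
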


\begin{proof}
According to condition (iii), there exists a $u$ satisfying 
$$
l(x,u) + \gamma [V_{f}(f(x,u,w,\theta))] \le V_{f}(x).
$$
Then for $i=0$, from the DP recursion (\ref{eqn:DP value})
\begin{equation*}
    \begin{split}
        V_{1}^{\ast}(x) &= \inf_{u} l(x,u) + \gamma [V_{0}^{\ast}(f(x,u,w,\theta))] \\
        &= \inf_{u} l(x,u) + \gamma [V_{f}(f(x,u,w,\theta))]  \\
        &\le V_{f}(x) = V_{0}^{\ast}(x).
    \end{split}
\end{equation*}

Next, suppose that for some $i \ge 1$,
$
V_{i}^{\ast}(x) \le V_{i-1}^{\ast}(x).
$
Then, using the DP recursion (\ref{eqn:DP value})
\begin{equation*}
    \begin{split}
        &V_{i+1}^{\ast}(x) - V_{i}^{\ast}(x) \\ = &l(x,\kappa_{i+1}(x)) + \gamma[V_{i}^{\ast}(f(x,\kappa_{i+1}(x),w,\theta))] \\
        &-l(x,\kappa_{i}(x)) - \gamma[V_{i-1}^{\ast}(f(x,\kappa_{i}(x),w,\theta))] \\
        \le &l(x,\kappa_{i}(x)) + \gamma[V_{i}^{\ast}(f(x,\kappa_{i}(x),w,\theta))] \\
        &-l(x,\kappa_{i}(x)) - \gamma[V_{i-1}^{\ast}(f(x,\kappa_{i}(x),w,\theta))] 
    \end{split}
\end{equation*}
since $\kappa_{i}(x)$ may not be optimal as $\kappa_{i+1}(x)$.

According to the fact that
$
\sup_{\theta}\{a(\theta)\}-\sup_{\theta}\{b(\theta)\} \le \sup_{\theta}\{a(\theta)-b(\theta)\}
$, we have
\begin{equation*}
    \begin{split}
        &\gamma[V_{i}^{\ast}(f(x,\kappa_{i}(x),w,\theta))] -\gamma[V_{i-1}^{\ast}(f(x,\kappa_{i}(x),w,\theta))] \\
        = &\sup_{\theta \in \mathcal{A}}\mathbb{E}_{w}[V_{i}^{\ast}(f(x,\kappa_{i}(x),w,\theta))] \\
        &-\sup_{\theta \in \mathcal{A}}\mathbb{E}_{w}[V_{i-1}^{\ast}(f(x,\kappa_{i}(x),w,\theta))] \\
        \le &\sup_{\theta \in \mathcal{A}}\mathbb{E}_{w}[V_{i}^{\ast}(f(x,\kappa_{i}(x),w,\theta)) - V_{i-1}^{\ast}(f(x,\kappa_{i}(x),w,\theta))]
    \end{split}
\end{equation*}

Since we assume that $V_{i}^{\ast}(x) \le V_{i-1}^{\ast}(x)$, we have 
$
V_{i+1}^{\ast}(x) - V_{i}^{\ast}(x) \le 0,
$
which implies that 
$
V_{i+1}^{\ast}(x) \le V_{i}^{\ast}(x).
$

By induction, we have
$
V_{i+1}^{\ast}(x) \le V_{i}^{\ast}(x), \quad \forall i \in \mathbb{I}_{\ge 0}.
$
\end{proof}

A standard approach to establish stability is to employ the value function as a valid Lyapunov function. Therefore we show below that $V_{N}^{\ast}(x)$ is a valid Lyapnov function for the MPC-controlled system $x_{k+1}=f(x_{k},\kappa_{N}(x_{k}),w_{k},\theta)$.

\begin{proof}
\noindent \textbf{Lower Bound for $V_{N}^{\ast}(x)$.}\\
$
V_{N}^{\ast}(x) \ge l(x, \kappa_{N}(x)) \ge \alpha_{1}(|x|).
$

\noindent \textbf{Upper Bound for $V_{N}^{\ast}(x)$.}
It resorts to the weak controllability assumption that
$
V_{N}^{\ast}(x) \le \alpha_{2}(|x|).
$

\noindent \textbf{Descent Property for $V_{N}^{\ast}(x)$.}
Note that
\begin{equation*}
\begin{split}
    V_{N}^{\ast}(x) = l(x, \kappa_{N}(x)) + \gamma[V_{N-1}^{\ast}(f(x,\kappa_{N}(x),w,\theta))].
\end{split}
\end{equation*}
According to the monotonicity property, Lemma \ref{lemma:Mono value}, we have
\begin{equation*}
    \begin{split}
        &\gamma[V_{N}^{\ast}(f(x,\kappa_{N}(x),w,\theta))] - V_{N}^{\ast}(x) \\
        = & \gamma[V_{N}^{\ast}(f(x,\kappa_{N}(x),w,\theta))]  - l(x, \kappa_{N}(x)) \\
        &- \gamma[V_{N-1}^{\ast}(f(x,\kappa_{N}(x),w,\theta))] \\
        \le & - l(x, \kappa_{N}(x)) \\
        &+ \gamma[V_{N}^{\ast}(f(x,\kappa_{N}(x),w,\theta)) - V_{N-1}^{\ast}(f(x,\kappa_{N}(x),w,\theta))] \\
        \le & - l(x, \kappa_{N}(x)) \le -\alpha_{1}(|x|).
    \end{split}
\end{equation*}

Therefore, $V_{N}^{\ast}(x)$ is a valid Lyapnov function for the risk-averse MPC-controlled system $x_{k+1}=f(x_{k},\kappa_{N}(x_{k}),w_{k},\theta)$. According to Theorem \ref{thm:RA Lyapunov stability}, the origin is RAAS in $\mathcal{X}$ for the risk-averse MPC-controlled system $x_{k+1}=f(x_{k},\kappa_{N}(x_{k}),w_{k},\theta)$.
\end{proof}

\section{Proof of Theorem \ref{thm:Bayesian RA MPC stability}(Bayesian Risk-averse MPC Stability Conditions)}\label{appendix:Bayesian RA MPC stability}

To prove the stability of the Bayesian risk-averse MPC, we first prove the relaxed monotonicity property of the time-varying value function $V_{N}^{\ast}(x,k)$.

Define 
\begin{equation}\label{eq:delta}
    \delta_{k} := \sup_{\theta \in \mathcal{A}_{k}}\{\mathbb{E}_{w}[V_{f}(f(x,u,w,\theta))]-V_{f}(x) + l(x,u)\}
\end{equation}

\begin{lemma}[Relaxed Monotonicity of the value function]\label{lemma:Relaxed Mono value}
The value function $V_{N}^{\ast}(x,k)$ is monotone with a relaxed bound $\delta_{k}$, i.e.,
\begin{equation}
    V_{i+1}^{\ast}(x,k) \le V_{i}^{\ast}(x,k) + \delta_{k}, \quad \forall i \in \mathbb{I}_{\ge 0}.
\end{equation}
\end{lemma}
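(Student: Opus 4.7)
The plan is to mirror the induction of Lemma~\ref{lemma:Mono value}, but to carry through an additive slack that arises because the descent property in condition (iii) of Theorem~\ref{thm:Bayesian RA MPC stability} is imposed only at the true parameter $\theta^\ast$, rather than uniformly on the ambiguity set $\mathcal{A}_k$. The quantity $\delta_k$ defined in \eqref{eq:delta} is precisely the worst-case mismatch this incurs when one evaluates the descent inequality at an arbitrary $\theta \in \mathcal{A}_k$, so the natural guess is that $\delta_k$ should replace the ``$0$'' on the right-hand side of the original monotonicity inequality, and this is what I will establish by induction on $i$.

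For the base case $i=0$, I would pick the control $u$ guaranteed by condition (iii), substitute it into the DP recursion \eqref{eqn:k DP value} in place of the infimum, pull the deterministic $V_f(x)$ outside the $\sup_\theta$, and recognise what remains as exactly $\delta_k$; this yields $V_1^\ast(x,k) \le V_0^\ast(x,k) + \delta_k$. For the inductive step, assuming $V_i^\ast(x,k) \le V_{i-1}^\ast(x,k) + \delta_k$, I would use $\kappa_i(x,k)$ as a feasible but possibly sub-optimal control in the DP recursion for $V_{i+1}^\ast$ (as in the original lemma), and then apply the coherent-risk inequality $\sup_\theta a(\theta) - \sup_\theta b(\theta) \le \sup_\theta (a(\theta)-b(\theta))$ together with monotonicity of the conditional expectation to propagate the hypothesis. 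The slack accrued per step is still $\delta_k$ rather than $i\delta_k$, because the supremum absorbs the constant bound produced by the induction hypothesis.

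The main subtlety is ensuring that the base-case control $u$ chosen from condition (iii) is admissible for every $\theta \in \mathcal{A}_k$ inside the inner $\sup_\theta \mathbb{E}_w[\cdot]$; here I would invoke the paper's blanket recursive-feasibility assumption. A secondary point is that $\delta_k$ as written in \eqref{eq:delta} is really a pointwise-in-$(x,u)$ quantity, but this does not disrupt the induction because the comparison is made between $V_{i+1}^\ast(x,k)$ and $V_i^\ast(x,k)$ at the same $x$, using the \emph{same} control in the upper bound. Finally, this relaxed monotonicity does not imply stability on its own: the subsequent stability argument (leading to Theorem~\ref{thm:Bayesian RA MPC stability}) must exploit that Bayesian consistency (Theorem~\ref{thm: consistency}) drives $\mathcal{A}_k \to \{\theta^\ast\}$, which together with continuity of $f$ and $V_f$ in $\theta$ forces $\delta_k \to 0$ so that the slack vanishes asymptotically.
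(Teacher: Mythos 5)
Your proposal is correct and follows essentially the same route as the paper: the base case comes directly from the definition of $\delta_{k}$ in \eqref{eq:delta}, and the inductive step reuses the sub-optimal-control comparison and the inequality $\sup_{\theta}a(\theta)-\sup_{\theta}b(\theta)\le\sup_{\theta}\{a(\theta)-b(\theta)\}$ exactly as in Lemma~\ref{lemma:Mono value}, with the supremum absorbing the induction hypothesis so the slack stays at $\delta_{k}$ rather than accumulating. Your observations about the pointwise-in-$(x,u)$ nature of $\delta_{k}$ and about $\delta_{k}\to 0$ under consistency match the paper's treatment and do not constitute a different argument.
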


\begin{proof}
According to (\ref{eq:delta}), there exists a $u$ satisfying 
$$
l(x,u) + \mathbb{E}_{w} [V_{f}(f(x,u,w,\theta))] \le V_{f}(x) + \delta_{k}, \quad \forall \theta \in \mathcal{A}_{k}.
$$
Then, following a similar induction procedure as in Lemma \ref{lemma:Mono value}, we can obtain
$
V_{i+1}^{\ast}(x,k) \le V_{i}^{\ast}(x,k) + \delta_{k}, \quad \forall i \in \mathbb{I}_{\ge 0}.
$
\end{proof}

Since the control policy $\kappa_{N}(x,k)$ is time-varying, the controlled closed-loop system is also time-varying. Therefore to establish stability for the time-varying system, we show below that the time-varying value function $V_{N}^{\ast}(x,k)$ can become a valid Lyapunov function for the time-varying MPC-controlled system $x_{k+1}=f(x_{k},\kappa_{N}(x_{k},k),w_{k},\theta)$, when $k$ is sufficiently large, i.e., given enough time.

\begin{proof}
\noindent \textbf{Lower Bound for $V_{N}^{\ast}(x,k)$.}\\
$
V_{N}^{\ast}(x,k) \ge l(x, \kappa_{N}(x,k)) \ge \alpha_{1}(|x|)
$

\noindent \textbf{Upper Bound for $V_{N}^{\ast}(x,k)$.}
$
V_{N}^{\ast}(x,k) \le \alpha_{2}(|x|).
$

\noindent \textbf{Descent Property for $V_{N}^{\ast}(x,k)$.}
Since the ambiguity sets satisfy
$\mathcal{A}_{k+1} \subseteq \mathcal{A}_{k},$
we have
\begin{equation*}
    \begin{split}
        V_{N}^{\ast}(x,k+1) &=\inf_{\boldsymbol{\mu}} \sup_{\theta \in \mathcal{A}_{k+1}} V_{N}(x, \theta, \boldsymbol{\mu}) \\
        &\le \inf_{\boldsymbol{\mu}} \sup_{\theta \in \mathcal{A}_{k}} V_{N}(x, \theta, \boldsymbol{\mu}) = V_{N}^{\ast}(x,k) 
    \end{split}
\end{equation*}

Note that
\begin{equation*}
\begin{split}
    V_{N}^{\ast}(x,k) &= l(x, \kappa_{N}(x,k)) \\
    &+ \sup_{\theta \in \mathcal{A}_{k}}\mathbb{E}_{w}[V_{N-1}^{\ast}(f(x,\kappa_{N}(x,k),w,\theta),k)].
\end{split}
\end{equation*}
According to the relaxed monotonicity property, Lemma \ref{lemma:Relaxed Mono value}, we have
\begin{equation*}
    \begin{split}
        &\sup_{\theta \in \mathcal{A}_{k}}\mathbb{E}_{w}[V_{N}^{\ast}(f(x,\kappa_{N}(x,k),w,\theta),k+1)] - V_{N}^{\ast}(x,k) \\
        \le &\sup_{\theta \in \mathcal{A}_{k}}\mathbb{E}_{w}[V_{N}^{\ast}(f(x,\kappa_{N}(x,k),w,\theta),k)] - V_{N}^{\ast}(x,k) \\
        = & \sup_{\theta \in \mathcal{A}_{k}}\mathbb{E}_{w}[V_{N}^{\ast}(f(x,\kappa_{N}(x,k),w,\theta),k)] - l(x, \kappa_{N}(x,k)) \\
        &- \sup_{\theta \in \mathcal{A}_{k}}\mathbb{E}_{w}[V_{N-1}^{\ast}(f(x,\kappa_{N}(x,k),w,\theta),k)] \\
        \le & - l(x, \kappa_{N}(x,k)) + \sup_{\theta \in \mathcal{A}_{k}}\mathbb{E}_{w}[V_{N}^{\ast}(f(x,\kappa_{N}(x,k),w,\theta),k) \\
        &- V_{N-1}^{\ast}(f(x,\kappa_{N}(x,k),w,\theta),k)] \\
        \le & - l(x, \kappa_{N}(x,k)) + \delta_{k}.
    \end{split}
\end{equation*}

If the Bayesian estimator is consistent, then by the credible interval construction, the ambiguity set $\mathcal{A}_{k}$ will converge to a singleton that only contains $\theta^{\ast}$, i.e. $\mathcal{A}_{\infty}=\{\theta^{\ast}\}$, w.p.1. 

According to condition (iii), we have
\begin{equation*}
    \begin{split}
        \delta_{\infty}=\mathbb{E}_{w}[V_{f}(f(x,u,w,\theta^{\ast}))]-V_{f}(x) + l(x,u) \le 0.
    \end{split}
\end{equation*}

Then w.p.1., there exists $K>0$, such that when $k>K$, the descent property for $V_{N}^{\ast}(\cdot,k)$ can be established as
\begin{equation*}
\begin{split}
    &\mathbb{E}_{w}[V_{N}^{\ast}(f(x,\kappa_{N}(x,k),w,\theta^{\ast}),k+1)] - V_{N}^{\ast}(x,k) \\
    &\le - l(x, \kappa_{N}(x,k)) \le - \alpha_{1}(|x|).
\end{split}
\end{equation*}

Therefore, the time-varying value function $V_{N}^{\ast}(x,k)$ becomes a valid Lyapunov function for the time-varying MPC-controlled system $x_{k+1}=f(x_{k},\kappa_{N}(x_{k},k),w_{k},\theta)$, when $k$ is sufficiently large, i.e., given enough time. According to the Lyapunov theorem of stochastic systems \cite{Chatterjee2015OnTechniques}, the origin is asymptotically stable in $\mathcal{X}$ for the Bayesian risk-averse MPC-controlled system $x_{k+1}=f(x_{k},\kappa_{N}(x_{k},k),w_{k},\theta)$.
\end{proof}

\section{Proof of Theorem \ref{thm:sub-optimal stability}(Stability of Sub-optimal MPC)}\label{appendix:sub-optimal stability}

In the sub-optimal MPC algorithm, denote the extended state, consisting of the state and warm-start pair, as $z:=(x,\boldsymbol{\mu}(\tilde{\boldsymbol{v}}))$. Then the extended state evolves according to 
\begin{equation}\label{eqn:extended state}
\begin{split}
    z^{+} \in H(z) := \{ (x^{+}, \boldsymbol{\mu}(\tilde{\boldsymbol{v}})^{+}) | &x^{+} = f(x,\mu(x,v_{0}),w,
    \theta^{\ast}), \\
    &\boldsymbol{\mu}(\tilde{\boldsymbol{v}})^{+} = \Psi(x,\boldsymbol{\mu}(\tilde{\boldsymbol{v}})) \}
\end{split}
\end{equation}
where $\boldsymbol{\mu}(\tilde{\boldsymbol{v}})^{+} = \Psi(x,\boldsymbol{\mu}(\tilde{\boldsymbol{v}}))$ denotes the mapping from the warm-start of the current step to the next step. 

To prove the asymptotic stability w.r.t. state $x$, we first prove the asymptotic stability w.r.t. the extended state $z$.
In order to directly link the asymptotic behavior of $z$ with that of $x$, the following proposition is necessary.

\begin{proposition}[Linking warm-start and state]\label{prop: link pair}
There exists a function $\alpha_{r}(\cdot) \in \mathcal{K}_{\infty}$ such that $|\boldsymbol{\mu}(\tilde{\boldsymbol{v}})| \le \alpha_{r}(|x|)$ for any feasible $x$ and $\tilde{\boldsymbol{v}}$.
\end{proposition}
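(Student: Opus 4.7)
The plan is to sandwich the warm-start value function: an upper bound on $V_N(x, \boldsymbol{\mu}(\tilde{\boldsymbol{v}}), k)$ as a $\mathcal{K}_\infty$ function of $|x|$ (delivered by conditions (ii)--(iii) of Theorem~\ref{thm:Bayesian RA MPC stability}), combined with a stage-wise lower bound in terms of control magnitudes (delivered by condition (iv) of Theorem~\ref{thm:sub-optimal stability}). Dividing the two bounds will yield the desired $\alpha_r \in \mathcal{K}_\infty$.

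First, I would show that any warm-start obtained via the shift-and-append rule (\ref{eqn:warm start}) satisfies $V_N(x, \boldsymbol{\mu}(\tilde{\boldsymbol{v}}), k) \le \alpha'_2(|x|)$ for some $\alpha'_2 \in \mathcal{K}_\infty$ uniform in $k$. The ingredients are: the terminal descent in condition (iii) of Theorem~\ref{thm:Bayesian RA MPC stability}, which guarantees that appending $v_f$ to the shifted optimal sequence does not increase the cost relative to the previous optimal value; the envelope $V_N^*(x,k) \le \alpha_2(|x|)$ from condition (ii); and the forward-shrinking property $\mathcal{A}_{k+1} \subseteq \mathcal{A}_k$, which lets sups over the current ambiguity set be dominated by sups over the previous one. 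Applying the relaxed monotonicity of Lemma~\ref{lemma:Relaxed Mono value} together with the sub-optimal inequality (\ref{eqn:sub-optimal policy}) in an induction on $k$ transports the envelope forward in time. Next, by monotonicity and translation equivariance of the coherent risk measure, the nested representation (\ref{eqn: nested DRO}) yields the stage-wise lower bound
\[
V_N(x, \boldsymbol{\mu}(\tilde{\boldsymbol{v}}), k) \;\ge\; \sum_{i=0}^{N-1} \alpha_l(|u_i|),
\]
since $l(x_i,u_i) \ge \alpha_l(|(x_i,u_i)|) \ge \alpha_l(|u_i|)$ by condition (iv). Combining the two bounds and taking an appropriate norm on the control sequence gives $|\boldsymbol{\mu}(\tilde{\boldsymbol{v}})| \le \alpha_r(|x|)$ with $\alpha_r := N \cdot \alpha_l^{-1} \circ \alpha'_2$, which is in $\mathcal{K}_\infty$ as the composition of $\mathcal{K}_\infty$ functions.

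The main obstacle will be making the first step rigorous, specifically ensuring that the envelope $\alpha'_2$ genuinely belongs to $\mathcal{K}_\infty$ (continuous, zero at zero, strictly increasing and unbounded). A naive application of Lemma~\ref{lemma:Relaxed Mono value} gives $V_N(x, \boldsymbol{\mu}(\tilde{\boldsymbol{v}}), k) \le \alpha_2(|x|) + \delta_k$, but the additive constant $\delta_k$ does not vanish at $x=0$. The remedy is to observe that since the origin is an equilibrium of the nominal closed-loop system and $v_f$ yields zero control at $x=0$ (by condition (iii) with $u=0$), the warm-start itself vanishes at $x=0$, hence $V_N(0, \boldsymbol{\mu}(\tilde{\boldsymbol{v}}), k) = 0$; continuity of $V_N$ in $x$ (inherited from continuity of $f$, $l$ and $V_f$) then allows the bound $\alpha_2 + \delta_k$ to be majorized by a genuine $\mathcal{K}_\infty$ function through a standard comparison-function construction. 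Once $\alpha'_2$ is secured, the remainder of the argument is the two-sided bound described above.
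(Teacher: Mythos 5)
The paper itself does not prove this proposition: it is imported verbatim as Proposition~10 of \cite{Allan2017OnMPC}, so there is no in-paper argument to compare against. Your overall mechanism --- lower-bound the warm-start cost by a $\mathcal{K}_\infty$ function of the control magnitudes via condition (iv), upper-bound it by a $\mathcal{K}_\infty$ function of $|x|$, and invert --- is exactly the mechanism behind the cited result, and your lower-bound half is sound (it is the same chain of inequalities the paper reuses in Appendix~E, via the weak triangle inequality for class-$\mathcal{K}$ functions and the $\ell_p$-norm projection).

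The gap is in the upper bound. The proposition quantifies over \emph{any} feasible pair $(x,\tilde{\boldsymbol{v}})$, whereas your induction through \eqref{eqn:warm start}, \eqref{eqn:sub-optimal policy} and Lemma~\ref{lemma:Relaxed Mono value} only controls warm starts generated along one closed-loop trajectory, and even there it bounds $V_N(x_{k+1},\boldsymbol{\mu}(\tilde{\boldsymbol{v}}(k+1)),k+1)$ in terms of the cost at the \emph{previous} state $x_k$ (plus slack), not in terms of $|x_{k+1}|$ alone. Converting that telescoping estimate into a state-only envelope $\alpha_2'(|x|)$ is precisely the circularity this proposition exists to break: the stability proof of Theorem~\ref{thm:sub-optimal stability} consumes the proposition, so the proposition cannot rest on the closed-loop descent. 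Your patch for the additive slack also does not go through as stated, because $\delta_k$ in \eqref{eq:delta} is a property of the ambiguity set $\mathcal{A}_k$ (a supremum over $\theta$), not a function of $x$ vanishing at the origin, so $\alpha_2(|x|)+\delta_k$ cannot be majorized by a $\mathcal{K}_\infty$ function of $|x|$ for fixed $k$; and for a genuinely stochastic disturbance one does not even have $V_N(0,\boldsymbol{\mu}(\tilde{\boldsymbol{v}}),k)=0$, since $f(0,0,w,\theta)\neq 0$ in general. The standard repair --- and the one used in \cite{Allan2017OnMPC} --- avoids a global cost envelope altogether: impose, as part of warm-start admissibility, the local bound $V_N(x,\boldsymbol{\mu}(\tilde{\boldsymbol{v}}),k)\le V_f(x)$ for $x$ in a small ball around the origin (which the shift-and-append construction satisfies by the terminal ingredients of condition (iii)), yielding $|\boldsymbol{\mu}(\tilde{\boldsymbol{v}})|\le N\,\alpha_l^{-1}(\alpha_f(|x|))$ locally; away from the origin invoke compactness of $\mathcal{U}$ to obtain the constant bound $|\boldsymbol{\mu}(\tilde{\boldsymbol{v}})|\le N\sup_{u\in\mathcal{U}}|u|$; and then stitch the two pieces into a single $\alpha_r\in\mathcal{K}_\infty$. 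Without that compactness (or an explicit global controllability bound), the global half of your argument has no support.
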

\begin{proof}
The proof can be found in Proposition 10 of \cite{Allan2017OnMPC}.
\end{proof}

Since the extended state evolves as the difference inclusion, we utilize the following Lyapunov stability theorem for difference inclusion systems.

\begin{theorem}[Lyapunov Stability Theorem (Difference Inclusion)]\label{thm:Lyapunov Stability (Difference Inclusion)}
If the set $\mathcal{Z}$ contains the origin, is positive invariant for the difference inclusion $z^{+} \in H(z)$, $H(0)=0$, and there exists a Lyapunov function $V(\cdot)$ in $\mathcal{Z}$ such that for all $z \in \mathcal{Z}$:
\begin{itemize}
    \item[(i)] $\alpha_{1}(|z|) \le V(z) \le \alpha_{2}(|z|)$, where $\alpha_{1},\alpha_{2} \in \mathcal{K}_{\infty}$,
    \item[(ii)] $\sup_{z^{+}\in H(z)} V(z^{+}) \le V(z) - \alpha_{3}(|z|)$, where $\alpha_{3} \in \mathcal{K}_{\infty}$,
\end{itemize}
then the origin is asymptotically stable in $\mathcal{Z}$.
\end{theorem}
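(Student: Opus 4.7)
The plan is to follow the classical Lyapunov-function proof for discrete-time systems, adapted to the set-valued (difference inclusion) setting by always taking worst-case successors. Concretely, for any trajectory $\{z_k\}_{k\ge 0}$ satisfying $z_{k+1}\in H(z_k)$ with $z_0\in\mathcal{Z}$, condition (ii) immediately gives $V(z_{k+1})\le V(z_k)-\alpha_3(|z_k|)$, because $V(z_{k+1})\le\sup_{z^+\in H(z_k)}V(z^+)$. Positive invariance of $\mathcal{Z}$ guarantees that the whole trajectory stays in $\mathcal{Z}$, so this inequality is valid at every step. From here I would establish the two standard ingredients, stability (small initial condition implies small trajectory) and attractivity (the trajectory converges to the origin), and then merge them into a $\mathcal{KL}$ bound.

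For stability, I would fix $\varepsilon>0$ and, using $\alpha_1,\alpha_2\in\mathcal{K}_\infty$ (hence invertible), choose $\delta:=\alpha_2^{-1}(\alpha_1(\varepsilon))>0$. If $|z_0|\le\delta$, condition (i) gives $V(z_0)\le\alpha_2(\delta)=\alpha_1(\varepsilon)$; since $V(z_k)$ is non-increasing along any trajectory (because $\alpha_3\ge 0$), we get $V(z_k)\le\alpha_1(\varepsilon)$ and therefore $|z_k|\le\alpha_1^{-1}(V(z_k))\le\varepsilon$ for all $k\ge 0$. This yields $\mathcal{K}$-stability of the origin. For attractivity, note that $\{V(z_k)\}$ is non-negative and non-increasing, so it converges to some $V^\star\ge 0$. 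Telescoping condition (ii) yields
\begin{equation*}
\sum_{k=0}^{N-1}\alpha_3(|z_k|)\;\le\;V(z_0)-V(z_N)\;\le\;V(z_0)\;\le\;\alpha_2(|z_0|),
\end{equation*}
so the series $\sum_{k\ge 0}\alpha_3(|z_k|)$ converges. In particular $\alpha_3(|z_k|)\to 0$, and since $\alpha_3\in\mathcal{K}_\infty$ is continuous and zero only at zero, $|z_k|\to 0$.

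To upgrade this to the $\mathcal{KL}$ form, I would argue that the $V$-sequence actually decreases at a rate governed by its own magnitude. Using (i), $|z_k|\ge\alpha_2^{-1}(V(z_k))$, so
\begin{equation*}
V(z_{k+1})\;\le\;V(z_k)-\alpha_3\bigl(\alpha_2^{-1}(V(z_k))\bigr)\;=:\;V(z_k)-\tilde{\alpha}(V(z_k)),
\end{equation*}
with $\tilde{\alpha}\in\mathcal{K}_\infty$. A standard comparison lemma (cf.\ Lemma 4.3 of \cite{Kellett2014AResults}) then supplies a class-$\mathcal{KL}$ function $\tilde{\beta}$ with $V(z_k)\le\tilde{\beta}(V(z_0),k)$. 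Composing with the sandwich from (i) gives
\begin{equation*}
|z_k|\;\le\;\alpha_1^{-1}\!\bigl(\tilde{\beta}(\alpha_2(|z_0|),k)\bigr)\;=:\;\beta(|z_0|,k),
\end{equation*}
and $\beta\in\mathcal{KL}$ by the composition lemmas recalled in Appendix~\ref{appendix:RA Lyapunov stability}. This establishes asymptotic stability of the origin in $\mathcal{Z}$ for the inclusion $z^+\in H(z)$.

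The main subtlety, and the step I would treat most carefully, is the transition from the pointwise decrease along arbitrary selected trajectories to a uniform $\mathcal{KL}$ bound valid for every trajectory of the inclusion: one must replace the inequality $V(z_{k+1})\le V(z_k)-\alpha_3(|z_k|)$ by the worst-case version $\sup_{z^+\in H(z_k)}V(z^+)\le V(z_k)-\alpha_3(|z_k|)$ at every step, which is exactly what condition (ii) provides, and then verify that the comparison lemma applies uniformly in the selection. The hypotheses $H(0)=0$ and $0\in\mathcal{Z}$ ensure that the origin is an equilibrium so that the stability statement is meaningful, while positive invariance of $\mathcal{Z}$ guarantees that the iterated Lyapunov inequality never falls outside the domain on which (i)--(ii) are assumed.
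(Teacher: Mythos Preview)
Your argument is correct and follows the standard discrete-time Lyapunov route (uniform decrease $\Rightarrow$ comparison lemma $\Rightarrow$ $\mathcal{KL}$ bound, composed through $\alpha_1,\alpha_2$), with the appropriate care that condition (ii) already controls the supremum over all successors and hence over all selections of the inclusion. The paper itself does not supply an in-line proof of this statement: it simply defers to Proposition~13 of \cite{Allan2017OnMPC}. So there is no ``paper's own proof'' to compare against beyond that citation; your write-up is a self-contained version of precisely the argument that reference contains. One small technicality worth making explicit: the comparison step $V(z_{k+1})\le V(z_k)-\tilde\alpha(V(z_k))$ with $\tilde\alpha=\alpha_3\circ\alpha_2^{-1}$ does not by itself guarantee non-negativity of the right-hand side, so in practice one replaces $\tilde\alpha$ by $\min\{s,\tilde\alpha(s)\}$ (as the paper does in Appendix~\ref{appendix:RA Lyapunov stability} for its own Lyapunov theorem) before invoking the $\mathcal{KL}$ comparison lemma; your citation of Kellett covers this, but it is the one place a reader might stumble.
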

\begin{proof}
The proof can be found in Proposition 13 of \cite{Allan2017OnMPC}.
\end{proof}
 
According to Theorem \ref{thm:Lyapunov Stability (Difference Inclusion)}, to establish stability for the extended state $z$, we need to show that $V_{N}(z,k)$ is a valid Lyapunov function for the difference inclusion system \eqref{eqn:extended state}.

\begin{proof}
\noindent \textbf{Lower Bound for $V_{N}(z,k)$.}
\begin{equation*}
\begin{split}
V_{N}(z,k) &= V_{N}(x,\boldsymbol{\mu}(\tilde{\boldsymbol{v}}),k) \ge V_{N}(x,\boldsymbol{\mu}(\boldsymbol{v}),k) \\ 
&\ge \sum_{i=0}^{N-1} l(x_{i}, \mu(x_{i},v_{i})) \ge \sum_{i=0}^{N-1} \alpha_{l}(|(x_{i},\mu(x_{i},v_{i}))|)
\end{split}
\end{equation*}

According to the property of $\mathcal{K}$ functions that for $\alpha \in \mathcal{K}$ and all $c_{i}\in \mathbb{R}_{\ge 0},i \in \mathbb{I}_{1:n}$, $\alpha(c_{1}+\cdots+c_{n}) \le \alpha(n c_{1})+\cdots+\alpha(n c_{n})$, we have
\begin{equation*}
\begin{split}
\sum_{i=0}^{N-1} \alpha_{l}(|(x_{i},\mu(x_{i},v_{i}))|)
\ge \alpha_{l}(\frac{1}{N}\sum_{i=0}^{N-1}|(x_{i},\mu(x_{i},v_{i}))|) 
\end{split}
\end{equation*}

Then from the triangle inequality, we obtain
\begin{equation*}
\alpha_{l}(\frac{1}{N}\sum_{i=0}^{N-1}|(x_{i},\mu(x_{i},v_{i}))|) 
\ge \alpha_{l}(|(\boldsymbol{x},\boldsymbol{\mu}(\boldsymbol{v}))|/N)
\end{equation*}

Finally utilizing the $l_{p}$-norm property that for all vectors $\boldsymbol{a}, \boldsymbol{b}$, $|(\boldsymbol{a}, \boldsymbol{b})| \ge |\boldsymbol{b}|$, and noting that $x_{0}=x$, we have
\begin{equation*}
\begin{split}
\alpha_{l}(|(\boldsymbol{x},\boldsymbol{\mu}(\boldsymbol{v}))|/N) 
&\ge \alpha_{l}(|(x,\boldsymbol{\mu}(\boldsymbol{v}))|/N) \\
&:= \alpha_{1}(|(x,\boldsymbol{\mu}(\boldsymbol{v}))|)=\alpha_{1}(z)
\end{split}
\end{equation*}
where $\alpha_{1} \in \mathcal{K}_{\infty}$. Thus we have established the lower bound $V_{N}(z,k) \ge \alpha_{1}(z)$.

\noindent \textbf{Upper Bound for $V_{N}(z,k)$.}
$
V_{N}(z,k) \le \alpha_{2}(|z|).
$

\noindent \textbf{Descent Property for $V_{N}(z,k)$.}
Since the ambiguity sets satisfy
$\mathcal{A}_{k+1} \subseteq \mathcal{A}_{k},$
we have
\begin{equation*}
    \begin{split}
        V_{N}(z,k+1) &= \sup_{\theta \in \mathcal{A}_{k+1}} V_{N}(x, \theta, \boldsymbol{\mu}(\tilde{\boldsymbol{v}})) \\
        &\le \sup_{\theta \in \mathcal{A}_{k}} V_{N}(x, \theta, \boldsymbol{\mu}(\tilde{\boldsymbol{v}})) = V_{N}(z,k). 
    \end{split}
\end{equation*}

Note that
\begin{equation*}
\begin{split}
    V_{N}&(z,k) = V_{N}(x, \boldsymbol{\mu}(\tilde{\boldsymbol{v}}),k) 
    \ge V_{N}(x, \boldsymbol{\mu}(\boldsymbol{v}),k) \\
    &= l(x, \mu(x,v_{0})) \\
    &+ \sup_{\theta \in \mathcal{A}_{k}}\mathbb{E}_{w}[V_{N-1}(f(x,\mu(x,v_{0}),w,
    \theta),\theta,\boldsymbol{\mu}_{1:N-1}(\boldsymbol{v}))].
\end{split}
\end{equation*}

Thus we have
\begin{equation*}
    \begin{split}
        &\sup_{\theta \in \mathcal{A}_{k}}\mathbb{E}_{w}[V_{N}(z^{+},k+1)] - V_{N}(z,k) \\
        \le &\sup_{\theta \in \mathcal{A}_{k}}\mathbb{E}_{w}[V_{N}(z^{+},k)] - V_{N}(z,k) \\
        = &\sup_{\theta \in \mathcal{A}_{k}}\mathbb{E}_{w}[V_{N}(f(x,\mu(x,v_{0}),w,\theta),\theta,\boldsymbol{\mu}(\tilde{\boldsymbol{v}})^{+})] - V_{N}(z,k) \\
        \le &\sup_{\theta \in \mathcal{A}_{k}}\mathbb{E}_{w}[V_{N}(f(x,\mu(x,v_{0}),w,\theta),\theta,\boldsymbol{\mu}(\tilde{\boldsymbol{v}})^{+})] - l(x, \mu(x,v_{0})) \\
        &- \sup_{\theta \in \mathcal{A}_{k}}\mathbb{E}_{w}[V_{N-1}(f(x,\mu(x,v_{0}),w,
    \theta),\theta,\boldsymbol{\mu}_{1:N-1}(\boldsymbol{v}))] \\
        \le & - l(x, \mu(x,v_{0})) + \sup_{\theta \in \mathcal{A}_{k}}\mathbb{E}_{w}[V_{N}(f(x,\mu(x,v_{0}),w,\theta),\theta,\boldsymbol{\mu}(\tilde{\boldsymbol{v}})^{+}) \\
        &- V_{N-1}(f(x,\mu(x,v_{0}),w,
    \theta),\theta,\boldsymbol{\mu}_{1:N-1}(\boldsymbol{v}))].
    \end{split}
\end{equation*}
Since 
$
\boldsymbol{\mu}(\tilde{\boldsymbol{v}})^{+}=\{\boldsymbol{\mu}_{1:N-1}(\boldsymbol{v}), \mu(\cdot, v_{f})\},
$
we have
\begin{equation*}
    \begin{split}
        &V_{N}(x_{0},\theta,\boldsymbol{\mu}(\tilde{\boldsymbol{v}})^{+}) 
        - V_{N-1}(x_{0},\theta,\boldsymbol{\mu}_{1:N-1}(\boldsymbol{v})) \\
        =&l(x_{N-1},\mu(x_{N-1},v_{f}))-V_{f}(x_{N-1})\\
        &+\sup_{\theta \in \mathcal{A}_{k}}\mathbb{E}_{w}[V_{f}(f(x_{N-1},\mu(x_{N-1},v_{f}),w,\theta))].
    \end{split}
\end{equation*}
With the same definition of $\delta_{k}$ as in \eqref{eq:delta}, we can obtain 
\begin{equation*}
    \begin{split}
        \sup_{\theta \in \mathcal{A}_{k}}\mathbb{E}_{w}[V_{N}(z^{+},k+1)] - V_{N}(z,k) 
        \le  -l(x, \mu(x,v_{0})) + \delta_{k}.
    \end{split}
\end{equation*}

Then follow the same procedure as in Appendix \ref{appendix:Bayesian RA MPC stability}, we can prove that when $k\rightarrow \infty$,
\begin{equation*}
    \begin{split}
        \mathbb{E}_{w}[V_{N}(z^{+},k+1)] - V_{N}(z,k) 
        &\le -l(x, \mu(x,v_{0})) \\
        &\le -\alpha_{l}(|(x,\mu(x,v_{0}))|).
    \end{split}
\end{equation*}

According to Proposition \ref{prop: link pair}, we have
\begin{equation*}
    \begin{split}
        |(x,\boldsymbol{\mu}(\tilde{\boldsymbol{v}}))| \le |x|+|\boldsymbol{\mu}(\tilde{\boldsymbol{v}})| 
        &\le |x| + \alpha_{r}(|x|) := \alpha_{r'}(|x|) \\
        &\le \alpha_{r'}(|(x,\mu(x,v_{0}))|),
    \end{split}
\end{equation*}
Therefore, $\alpha_{l}\circ \alpha_{r'}^{-1}(|(x,\boldsymbol{\mu}(\tilde{\boldsymbol{v}}))|) \le \alpha_{l}(|(x,\mu(x,v_{0}))|)$.
Define $\alpha_{3}:=\alpha_{l}\circ \alpha_{r'}^{-1}$, we then obtain
\begin{equation*}
    \begin{split}
        \mathbb{E}_{w}[V_{N}(z^{+},k+1)] - V_{N}(z,k) 
        &\le -\alpha_{3}(|(x,\boldsymbol{\mu}(\tilde{\boldsymbol{v}}))|) \\
        &= -\alpha_{3}(|z|).
    \end{split}
\end{equation*}

Therefore, the time-varying value function $V_{N}(z,k)$ is a valid Lyapunov function for the difference inclusion system \eqref{eqn:extended state}, when $k$ is sufficiently large. Asymptotic stability follows directly from Theorem \ref{thm:Lyapunov Stability (Difference Inclusion)}
\end{proof}

Now we can derive a bound on state $x$ from the extended state $z$. 

\begin{proof}
Since $z$ is asymptotic stable, then according to the triangle inequality, we have
\begin{equation*}
    \begin{split}
        \mathbb{E}[|z_{i}|] &\le \beta(|z|,i) = \beta(|(x,\boldsymbol{\mu}(\tilde{\boldsymbol{v}}))|,i) \\
        &= \beta(|(x,\boldsymbol{0})+(0,\boldsymbol{\mu}(\tilde{\boldsymbol{v}}))|,i) \\ 
        &\le \beta(|(x,\boldsymbol{0})|+|(0,\boldsymbol{\mu}(\tilde{\boldsymbol{v}}))|,i) \\
        &\le \beta(|x|+|\boldsymbol{\mu}(\tilde{\boldsymbol{v}})|,i).
    \end{split}
\end{equation*}
From Proposition \ref{prop: link pair}, we then obtain
$$
\beta(|x|+|\boldsymbol{\mu}(\tilde{\boldsymbol{v}})|,i) \le \beta(|x|+\alpha_{r}(|x|),i) := \tilde{\beta}(|x|,i),
$$
with $\tilde{\beta} \in \mathcal{KL}$.

Therefore by utilizing the $l_{p}$-norm property that for all vectors $\boldsymbol{a}, \boldsymbol{b}$, $|(\boldsymbol{a}, \boldsymbol{b})| \ge |\boldsymbol{b}|$, we have
\begin{equation*}
    \begin{split}
        \mathbb{E}[|x_{i}|] \le \mathbb{E}[|(x_{i},\boldsymbol{\mu}(\tilde{\boldsymbol{v}}))|] = \mathbb{E}[|z_{i}|] 
        \le \tilde{\beta}(|x|,i),
    \end{split}
\end{equation*}
which implies that $\mathbb{E}[|x_{i}|] \le \tilde{\beta}(|x|,i)$. Thus we have also proved the bound on the evolution of state $x$ depending on only the $x$ initial condition, which implies the asymptotic stability of state $x$.
\end{proof}

\bibliographystyle{unsrt}
\bibliography{references}

\end{document}